\documentclass[12pt,a4paper,leqno]{amsart}

\usepackage[latin1]{inputenc}
\usepackage[T1]{fontenc}
\usepackage{amsfonts}
\usepackage{amsmath}
\usepackage{amssymb}
\usepackage{eurosym}
\usepackage{mathrsfs}
\usepackage{palatino}
\usepackage{color}
\usepackage{esint}
\usepackage{url}
\usepackage{verbatim}

\usepackage{enumerate}

\newcommand{\R}{\mathbb{R}}
\newcommand{\C}{\mathbb{C}}

\newcommand{\Q}{\mathbb{Q}}

\newcommand{\Z}{\mathbb{Z}}
\newcommand{\E}{\mathbb{E}}

\newcommand{\calB}{\mathcal{B}}

\newcommand{\calM}{\mathcal{M}}
\newcommand{\calS}{\mathcal{S}}

\newcommand{\calF}{\mathcal{F}}

\newcommand{\calL}{\mathcal{L}}
\newcommand{\calK}{\mathcal{K}}

\newcommand{\calI}{\mathcal{I}}
\newcommand{\calR}{\mathcal{R}}
\newcommand{\calT}{\mathcal{T}}

\newcommand{\scrA}{\mathscr{A}}
\newcommand{\calJ}{\mathcal{J}}

\newcommand{\bla}{\big \langle}
\newcommand{\bra}{\big \rangle}

\numberwithin{equation}{section}

\newcommand{\ud}[0]{\,\mathrm{d}}



\newcommand{\BMO}[0]{\operatorname{BMO}}

\newcommand{\loc}[0]{\operatorname{loc}}




\newcommand{\calD}[0]{\mathcal{D}}

\swapnumbers
\theoremstyle{plain}
\newtheorem{thm}[equation]{Theorem}
\newtheorem{lem}[equation]{Lemma}
\newtheorem{prop}[equation]{Proposition}
\newtheorem{cor}[equation]{Corollary}

\theoremstyle{definition}

\theoremstyle{remark}
\newtheorem{rem}[equation]{Remark}

\pagestyle{headings}

\addtolength{\hoffset}{-1.15cm}
\addtolength{\textwidth}{2.3cm}
\addtolength{\voffset}{0.45cm}
\addtolength{\textheight}{-0.9cm}

\setcounter{tocdepth}{1}

\title[Multi-parameter estimates via operator-valued shifts]{Multi-parameter estimates via operator-valued shifts}

\author{Tuomas Hyt\"onen}
\address[T.H.]{Department of Mathematics and Statistics, University of Helsinki, P.O.B. 68, FI-00014 University of Helsinki, Finland}
\email{tuomas.hytonen@helsinki.fi}

\author{Henri Martikainen}
\address[H.M.]{Department of Mathematics and Statistics, University of Helsinki, P.O.B. 68, FI-00014 University of Helsinki, Finland}
\email{henri.martikainen@helsinki.fi}

\author{Emil Vuorinen}
\address[E.V.]{Department of Mathematics and Statistics, University of Helsinki, P.O.B. 68, FI-00014 University of Helsinki, Finland}
\email{emil.vuorinen@helsinki.fi}

\makeatletter
\@namedef{subjclassname@2010}{%
  \textup{2010} Mathematics Subject Classification}
\makeatother

\subjclass[2010]{42B20}
\keywords{Calder\'on--Zygmund operators, bi-parameter analysis, operator-valued analysis, dyadic shifts, model operators, representation theorems, $T1$ theorems, $\calR$-boundedness} 

\thispagestyle{empty}
\begin{document}

\begin{abstract}
We prove new results for multi-parameter singular integrals. For example, we prove that bi-parameter singular integrals in $\R^{n+m}$
satisfying natural $T1$ type conditions map $L^q(\R^n; L^p(\R^m;E))$ to $L^q(\R^n; L^p(\R^m;E))$ for
all $p,q \in (1,\infty)$ and UMD function lattices $E$. This result is shown to hold even in the $\calR$-boundedness sense
for all suitable families of bi-parameter singular integrals. 
On the technique side we demonstrate how many dyadic multi-parameter operators can be bounded by using, and further developing,
the theory of operator-valued dyadic shifts.
Even in the scalar-valued case this is an efficient way to bound  the various so called partial paraproducts,
which are key operators appearing in the multi-parameter representation theorems.
Our proofs also entail verifying the $\calR$-boundedness of various families of multi-parameter paraproducts.
\end{abstract}

\maketitle
\tableofcontents

\section{Introduction}
Representation theorems show the exact dyadic structure behind Calder\'on--Zygmund operators by representing them using simple dyadic operators, namely
some cancellative dyadic shifts and various paraproducts. In the linear case Petermichl \cite{Pe} first represented the Hilbert transform in this way, and later
one of us \cite{Hy} proved a representation theorem for all linear Calder\'on--Zygmund operators. These are important theorems as they can be used to reduce
questions to dyadic model operators. Such theorems are proved using dyadic--probabilistic methods, which were first pioneered by Nazarov--Treil--Volberg (see e.g. \cite{NTV}).

Dyadic--probabilistic methods of harmonic analysis have recently really shown their power in the multi-parameter setting.
For example, a representation theorem holds also in the bi-parameter setting as shown by one of us \cite{Ma1}. The multi-parameter extension of this is by Y. Ou \cite{Yo}.
In the bi-parameter context the representation theorem has proved to be extremely useful e.g. in connection
with bi-parameter commutators and weighted analysis, see Holmes--Petermichl--Wick \cite{HPW} and Ou--Petermichl--Strouse \cite{OPS},
and sparse domination, see Barron--Pipher \cite{BP}.

The multi-parameter harmonic analysis has a very rich and renowned history.
For example, we mention the famous covering theorem of Journ\'e \cite{Jo1}, and the deep product BMO, Hardy space and multi-parameter singular integral theory by 
Chang and Fefferman \cite{CF1}, \cite{CF2}, Fefferman \cite{Fe}, Fefferman and Stein \cite{FS} and Journ\'e \cite{Jo2}. Some more
recent references include Ferguson--Lacey \cite{FL}, Pipher--Ward \cite{PW} and Treil \cite{Tr}. The importance of dyadic techniques is already very apparent from these references. Our methods are most clearly tied to the state of the art dyadic--probabilistic developments and representation theorems.

In this paper we consider the most complicated dyadic model operators of modern dyadic multi-parameter representation theorems,
and prove new bounds for them via operator-valued dyadic shifts. The use of operator-valued dyadic shifts in this context
is a new and useful viewpoint even if we would be just considering scalar-valued theory. However, we can even work
with $E$-valued functions, where $E$ is a UMD function lattice (or sometimes even a general UMD space satisfying Pisier's property $(\alpha)$).
The proved bounds for the model operators translate into new results for multi-parameter singular integrals.
For example, we prove the following theorem.
\begin{thm}
Let $E$ be a UMD function lattice, and $p,q \in (1,\infty)$. Let $T$ be a bi-parameter singular integral satisfying $T1$ type assumptions
as in \cite{Ma1}. Then we have
$$
\|T\|_{L^q(\R^n; L^p(\R^m;E)) \to L^q(\R^n; L^p(\R^m;E))} < \infty.
$$
\end{thm}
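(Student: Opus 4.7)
The plan is to use the bi-parameter dyadic representation theorem of \cite{Ma1} to write $T$ as an average, over independent random dyadic grids on $\R^n$ and $\R^m$, of a sum of simple bi-parameter dyadic model operators: full cancellative shifts, full paraproducts, and ``partial paraproducts'' that are shift-like in one variable and paraproduct-like in the other. Each type comes with complexity parameters $(i_1,j_1,i_2,j_2)$, and the $T1$ type hypotheses produce decay factors of the form $2^{-\alpha \max(i,j)}$ in each parameter. Thus it suffices to bound each model operator on $L^q(\R^n;L^p(\R^m;E))$ with polynomial (in the complexities) constants, and then sum.

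The scalar full shifts and full paraproducts are by now standard: for these I would verify that the known one-parameter bounds upgrade to the vector-valued setting using that $L^q(\R^n;L^p(\R^m;E))$ is again a UMD function lattice (hence has the required UMD/property $(\alpha)$ structure), together with Fubini-type arguments in the two parameters. The core new work is in the partial paraproducts. For these, the plan is to fix one parameter, say the first, and regard the partial paraproduct as a one-parameter cancellative dyadic shift in $\R^n$ whose coefficients are not scalars but operators acting on $L^p(\R^m;E)$; concretely, these operators are (rescaled) dyadic paraproducts in the second variable, built from BMO-type symbols coming from $T1$-data.

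With this recasting, the problem reduces to an operator-valued dyadic shift estimate on $L^q(\R^n;Y)$ where $Y=L^p(\R^m;E)$ is a UMD space. The operator-valued shift theory that I would develop (and use) requires not just norm bounds but $\calR$-boundedness of the families of coefficient operators. So the main technical task is to verify $\calR$-boundedness of the relevant families of (one-parameter) paraproducts with UMD-lattice values, uniformly in the complexity data, with polynomial loss. This is where property $(\alpha)$, or equivalently the lattice structure of $E$, enters decisively: it allows one to pass from square-function estimates (which UMD provides) to genuine $\calR$-bounds, and to handle the double randomization that appears once both parameters are dyadically decomposed.

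The main obstacle I anticipate is exactly this $\calR$-boundedness step for the paraproduct coefficients, combined with obtaining bounds that are polynomial in the complexity so that the $2^{-\alpha \max(i,j)}$ decay from the representation theorem can absorb them. Once that is in place, the assembly is routine: apply the operator-valued one-parameter shift bound in the first variable, plug in the $\calR$-bound in the second, sum over complexities using the $T1$ decay, and integrate out the probabilistic average over grids to recover $T$.
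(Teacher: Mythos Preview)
Your plan matches the paper's approach: use the representation theorem of \cite{Ma1}, then bound each model operator on $L^q(\R^n;L^p(\R^m;E))$ with polynomial complexity dependence, treating partial paraproducts as operator-valued one-parameter shifts whose kernels are paraproducts in the other variable and invoking $\calR$-boundedness of paraproduct families on UMD lattices. One point you underplay: partial paraproducts also occur with the paraproduct component in the \emph{outer} variable $\R^n$, so the shift variable is the inner one $\R^m$, and the straightforward operator-valued shift bound only gives $L^p(\R^m;L^q(\R^n;E))$ estimates; the paper needs a separate ``wrong-order'' shift lemma (Lemma~\ref{lem:ShiftWrongOrder}, Proposition~\ref{prop:mod1-2}) requiring $\calR$-boundedness of the paraproducts extended to $\calL(L^p(\R^m;L^q(\R^n\times Y^n;E)))$, which is why the lattice-valued $\calR$-bound for paraproducts is genuinely indispensable even when $E=\R$.
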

In fact, we show this in the so called $\calR$-boundedness sense
for all suitable families of bi-parameter singular integrals. This extends the $T1$ type corollary of \cite{Ma1} already in three ways:
we can consider UMD function lattices (instead of $\R$ or $\C$), we get $L^q(L^p)$ boundedness for all $p,q \in (1,\infty)$ instead of just $L^2$ boundedness,
and we get $\calR$-boundedness results for families of bi-parameter singular integrals.

Multiparameter singular integrals have previously been studied in the UMD-valued setting
only in ``paraproduct free'' situations (this means that both the full and partial paraproducts disappear).
For example, Hyt\"onen and Portal \cite{HP} consider convolution-type (and hence paraproduct free) singular integrals. 
Di Plinio and Ou \cite{DO} have also studied $T1$ theorems in paraproduct free situations. For the first time, we do not have such limitations.

In the bi-parameter setting the paper \cite{Ma1} identifies
various types of paraproducts: two different full paraproducts and many partial paraproducts. These are very different in nature; the full paraproducts are
related to the so-called product BMO space, while the partial paraproducts have, in a sense, a paraproduct component only in one of the parameters. 
However, when we increase the amount of parameters, the full paraproducts of the previous generations start to appear in the new partial paraproducts. For example,
in the tri-parameter case we encounter partial paraproducts, which contain full bi-parameter paraproducts in a complicated way. Therefore, it is really the partial paraproducts
which are at the heart of the multi-parameter representation theorems: to deal with them, you need to deal with the full paraproducts.

The $L^2$ theory of partial paraproducts is not terribly complicated -- for the bi-parameter case see \cite{Ma1}. The bi-parameter $L^p$ estimates are established at least in \cite{HPW} using shifted square functions as the tool. Here we offer a new approach via operator-valued shifts. It allows to deal with more complicated and abstract vector-valued operators, gives $L^p(L^q)$ type bounds in a natural way, and is relatively explicit in the way it can handle arbitrary parameters. 
After the abstract theory, the application
to partial paraproducts only requires the verification of the $\calR$-boundedness of some families of full multi-parameter paraproducts. 
These results are of independent interest.

The abstract results are proved using operator-valued dyadic shifts as the main tool (in the applications we have some paraproduct-valued shifts).
The boundedness of operator-valued shifts was shown by H\"anninen--Hyt\"onen \cite{HH} in the one-parameter case.
We start by a small extension of this result by showing the $\calR$-boundedness of families of operator-valued shifts. This is extremely useful as it, for example, allows
us to extend this result to the bi-parameter situation by using shift-valued shifts. However, we also further develop the theory of operator-valued shifts in other subtle ways, which
is crucial for us when we consider some mixed-norm estimates appearing in the applications.
For clarity, three parameters is the highest degree of parameters that we tackle explicitly.

\subsection*{Acknowledgements}
T. Hyt\"onen was supported by the Finnish Centre of Excellence in Analysis and Dynamics Research. 
H. Martikainen is supported by the Academy of Finland through the grants 294840 and 306901, and is a member of the Finnish Centre of Excellence in Analysis and Dynamics Research.
E. Vuorinen is supported by the Academy of Finland through the grant 306901 and by the Finnish Centre of Excellence in Analysis and Dynamics Research.

\section{Definitions and preliminaries}
\subsection{Vinogradov notation}
We denote $A \lesssim B$ if $A \le CB$ for some absolute constant $C$. The constant $C$ can at least depend on the dimensions of the appearing Euclidean spaces, on integration exponents and
on Banach space constants (UMD and Pisier's $(\alpha)$).
We denote $A \sim B$ if $B \lesssim A \lesssim B$.

\subsection{Dyadic notation}
If $Q$ is a cube:
\begin{itemize} 
\item $\ell(Q)$ is the side-length of $Q$;
\item $\text{ch}(Q)$ denotes the dyadic children of $Q$;
\item If $Q$ is in a dyadic grid, then $Q^{(k)}$ denotes the unique dyadic cube $S$ in the same grid so that $Q \subset S$ and $\ell(S) = 2^k\ell(Q)$;
\item If $\calD$ is a dyadic grid, then $\calD_i = \{Q \in \calD\colon\, \ell(Q) = 2^{-i}\}$.
\end{itemize}

In this paper we denote a dyadic grid in $\R^n$ by $\calD^n$. We are at most working in the tri-parameter setting, and then we will have
three dyadic grids $\calD^n$, $\calD^m$, $\calD^k$ in $\R^n$, $\R^m$ and $\R^k$ respectively. Using the above notation $\calD^n_i$ denotes
those $I \in \calD^n$ for which $\ell(I) = 2^{-i}$. The measure of a cube $I$ is simply denoted by $|I|$ no matter in what dimension we are in.

When $I \in \calD^n$ we denote by $h_I$ a cancellative $L^2$ normalised Haar function. This means the following.
Writing $I = I_1 \times \cdots \times I_n$ we can define the Haar function $h_I^{\eta}$, $\eta = (\eta_1, \ldots, \eta_n) \in \{0,1\}^n$, by setting
\begin{displaymath}
h_I^{\eta} = h_{I_1}^{\eta_1} \otimes \cdots \otimes h_{I_n}^{\eta_n}, 
\end{displaymath}
where $h_{I_i}^0 = |I_i|^{-1/2}1_{I_i}$ and $h_{I_i}^1 = |I_i|^{-1/2}(1_{I_{i, l}} - 1_{I_{i, r}})$ for every $i = 1, \ldots, n$. Here $I_{i,l}$ and $I_{i,r}$ are the left and right
halves of the interval $I_i$ respectively. If $\eta \ne 0$ the Haar function is cancellative: $\int h_I^{\eta} = 0$. We usually suppress the presence of $\eta$
and simply write $h_I$ for some $h_I^{\eta}$, $\eta \ne 0$.

For $I \in \calD^n$ and a locally integrable function $f\colon \R^n \to E$, where $E$ is a Banach space, we define the martingale difference
$$
\Delta_I f = \sum_{I' \in \textup{ch}(I)} \big[ \bla f \bra_{I'} -  \bla f \bra_{I} \big] 1_{I'}.
$$
Here $\bla f \bra_I = \frac{1}{|I|} \int_I f$ (where the integral is the usual $E$-valued Bochner integral). 
Then $\Delta_I f = \sum_{\eta \ne 0} \langle f, h_{I}^{\eta}\rangle h_{I}^{\eta}$, or suppressing the $\eta$ summation, $\Delta_I f = \langle f, h_I \rangle h_I$.
Here $\langle f, h_I \rangle = \int f h_I$. In this paper the brackets $\langle \cdot, \cdot \rangle$ try to always refer to some kind of integral pairing, while
$\{ \cdot, \cdot \}_E$ is used for the dual pairing of a Banach space $E$.

A martingale block is defined by
$$
\Delta_K^i f = \mathop{\sum_{I \in \calD^n}}_{I^{(i)} = K} \Delta_I f, \qquad K \in \calD^n.
$$

\subsection{Multi-parameter notation}
We work either in the bi-parameter setting in the product space $\R^{n+m}$ or in the tri-parameter setting in the product space $\R^{n+m+k}$.
In such a context $x$ (or $y$) is always a tuple, for example if $x \in \R^{n+m+k}$, then $x = (x_1, x_2, x_3)$ with $x_1 \in \R^n$, $x_2 \in \R^m$ and $x_3 \in \R^k$.

We often need to take integral pairings with respect to one or two of the variables only.
For example, if $f \colon \R^{n+m+k} \to E$, where $E$ is a Banach space, then $\langle f, h_I \rangle_1 \colon \R^{m+k} \to E$ is defined by
$$
\langle f, h_I \rangle_1(x_2, x_3) = \int_{\R^n} f(y_1, x_2, x_3)h_I(y_1)\ud y_1,
$$
and 
$\langle f, h_I \otimes h_J \rangle_{1,2} \colon \R^{k} \to E$ is defined by
$$
\langle f, h_I \otimes h_J \rangle_{1,2}(x_3) = \int_{\R^m} \int_{\R^n} f(y_1, y_2, x_3)h_I(y_1)h_J(y_2) \ud y_1 \ud y_2.
$$
Moreover, an identification of the following kind is used all the time:  a function $f \colon \R^{n+m} \to E$ satisfying
$$
\Big(\int_{\R^n} \Big( \int_{\R^m} |f(x_1, x_2)|_E^p \ud x_2 \Big)^{q/p} \ud x_1\Big)^{1/q} < \infty
$$
is identified with the function $\phi_f \in L^q(\R^n; L^p(\R^m;E))$, $\phi_f(x_1) = f(x_1, \cdot)$.

We next define bi-parameter martingale differences. Let $f \colon \R^n \times \R^m \to E$ be locally integrable.
Let $I \in \calD^n$ and $J \in \calD^m$. We define the martingale difference
$$
\Delta_I^1 f \colon \R^{n+m} \to E, \Delta_I^1 f(x) := \Delta_I (f(\cdot, x_2))(x_1).
$$
(The reader should not confuse this with the martingale block notation of a one-parameter function from above).
Define $\Delta_J^2f$ analogously. Then we set
$$
\Delta_{I \times J} f \colon \R^{n+m} \to E, \Delta_{I \times J} f(x) = \Delta_I^1(\Delta_J^2 f)(x) = \Delta_J^2 ( \Delta_I^1 f)(x).
$$
Notice that $\Delta^1_I f = h_I \otimes \langle f , h_I \rangle_1$, $\Delta^2_J f = \langle f, h_J \rangle_2 \otimes h_J$ and
$ \Delta_{I \times J} f = \langle f, h_I \otimes h_J\rangle h_I \otimes h_J$ (suppressing the finite $\eta$ summations).

Martingale blocks are defined in the natural way
$$
\Delta_{K \times V}^{i, j} f  =  \sum_{I\colon I^{(i)} = K} \sum_{J\colon J^{(j)} = V} \Delta_{I \times J} f = \Delta_{K,i}^1( \Delta_{V,j}^2 f) = \Delta_{V,j}^2 ( \Delta_{K,i}^1 f).
$$
\subsection{BMO spaces}\label{ss:bmo}

We say that $b \in L^1_{\loc}(\R^n)$ belongs to the dyadic BMO space $\BMO_{\calD^n}(\R^n) = \BMO_{\calD^n}$ if
$$
\|b\|_{\BMO_{\calD^n}} := \sup_{I \in \calD^n} \frac{1}{|I|} \int_I |b - \langle b \rangle_I| < \infty.
$$
The ordinary space $\BMO(\R^n)$ is defined by taking the supremum over all cubes.

\subsubsection*{Bi-parameter product BMO}
Here we define the (dyadic) bi-parameter
product BMO space $\BMO_{\textup{prod}}^{\calD^n, \calD^m}(\R^n \times \R^m) = \BMO_{\textup{prod}}^{\calD^n, \calD^m}$.
For a sequence $\lambda = (\lambda_{I,J})$ we set
\begin{equation*}
\|\lambda\|_{\BMO_{\textup{prod}}^{\calD^n, \calD^m}} := 
\sup_{\Omega} \Big( \frac{1}{|\Omega|} \mathop{\sum_{I \in \calD^n, J \in \calD^m}}_{I \times J \subset \Omega} |\lambda_{I,J}|^2 \Big)^{1/2},
\end{equation*}
where the supremum is taken over those sets $\Omega \subset \R^{n+m}$ such that $|\Omega| < \infty$ and such that for every $x \in \Omega$ there exists
$I \in \calD^n, J \in \calD^m$ so that $x \in I \times J \subset \Omega$. 

We say that $b \in L^1_{\loc}(\R^{n+m})$ belongs to the space $\BMO_{\textup{prod}}^{\calD^n, \calD^m}$ if
$$
\| b \|_{\BMO_{\textup{prod}}^{\calD^n, \calD^m}} := \| (\langle b, h_I \otimes h_J\rangle)_{I,J} \|_{\BMO_{\textup{prod}}^{\calD^n, \calD^m}} < \infty.
$$
The (non-dyadic) product BMO space $\BMO_{\textup{prod}}(\R^{n+m})$ can be defined via the norm defined by the supremum of
the above dyadic norms.

For two sequences $\lambda = (\lambda_{I,J})$, $A = (A_{I, J})$ we have the key estimate
$$
\sum_{I \in \calD^n, J \in \calD^m} |\lambda_{I,J}| |A_{IJ}| \lesssim \|\lambda\|_{\BMO_{\textup{prod}}^{\calD^n, \calD^m}} \|S_{\calD^n, \calD^m}(A)\|_{L^1(\R^{n+m})},
$$
where
$$
S_{\calD^n, \calD^m}(A) := \Big( \sum_{I \in \calD^n, J \in \calD^m} |A_{IJ}|^2 \frac{1_{I \times J}}{|I \times J|} \Big)^{1/2}.
$$
For a simple proof see e.g. Proposition 4.1 of \cite{MO}. This inequality is the key property of the product BMO for us.
Of course, an analogous estimate holds in the one-parameter situation.


\subsection{Paraproducts}
Let $E$ be a Banach space.
A function $b \in L^1_{\loc}(\R^n)$ defines the dyadic paraproduct by the formula
$$
\pi_{\calD^n, b}f = \sum_{I \in \calD^n}  \langle f \rangle_I \Delta_I b, \qquad f \in L^1_{\loc}(\R^n; E).
$$
\subsubsection*{Bi-parameter full paraproducts}
While our main object of study in this paper are the so called partial paraproducts, we will also need to consider the so called full bi-parameter
paraproducts. For example, they appear in some of the partial tri-parameter paraproducts.

There are two types of full bi-parameter paraproducts: the standard ones and the mixed ones:
for $b \in L^1_{\loc}(\R^{n+m})$ we set
\begin{align*}
\Pi_{\calD^n, \calD^m, b} f &= \mathop{\sum_{I \in \calD^n}}_{J \in \calD^m} \langle b, h_I \otimes h_J \rangle \langle f \rangle_{I \times J} h_I \otimes h_J; \\
\Pi_{\calD^n, \calD^m,b}^{\textup{mixed}} f &= \mathop{\sum_{I \in \calD^n}}_{J \in \calD^m} \langle b, h_I \otimes h_J \rangle \Big\langle f, h_I \otimes \frac{1_J}{|J|} \Big\rangle
\frac{1_I}{|I|} \otimes h_J, \qquad f \in L^1_{\loc}(\R^{n+m};E).
\end{align*}



\subsection{UMD and Pisier's property $(\alpha)$}
A Banach space $E$ is said to be a UMD space if
$$
\Big\| \sum_{i=1}^N \epsilon_i d_i \Big\|_{L^p(\Omega; E)} \lesssim \Big\| \sum_{i=1}^N d_i \Big\|_{L^p(\Omega; E)}
$$
for all $E$-valued $L^p$-martingale difference sequences $(d_i)_{i=1}^N$ (defined on some probability space $\Omega$),
and for all signs $\epsilon_i \in \{-1 +1\}$. The UMD property is independent of the choice of the exponent $p \in (1,\infty)$.
If $E$ is UMD then so is $E^*$ and $L^p(\R^n;E)$.
 
A Banach space $E$ has Pisier's property $(\alpha)$ if for all $N$, all $\alpha_{i,j}$ in the complex unit disc and all
$e_{i,j} \in E$, $1 \le i, j \le N$, there holds
$$
\Big(\E \E' \Big| \sum_{1 \le i, j \le N} \epsilon_i \epsilon_j' \alpha_{i,j} e_{i,j} \Big|_E^2\Big)^{1/2} \lesssim \Big(\E \E' \Big| \sum_{1 \le i, j \le N} \epsilon_i \epsilon_j' e_{i,j} \Big|_E^2 \Big)^{1/2}.
$$ 
Here $(\epsilon_i)$ and $(\epsilon_j')$ are sequences of independent random signs.
If $E$ has Pisier's property $(\alpha)$ then so does $L^p(\R^n;E)$.

The Kahane--Khintchine inequality says that
$$
\Big( \E \Big| \sum_{i=1}^N \epsilon_i e_i \Big|_E^q \Big)^{1/q} \sim_q \Big( \E \Big| \sum_{i=1}^N \epsilon_i e_i \Big|_E^2 \Big)^{1/2}
$$
for all $1 \le q < \infty$ and Banach spaces $E$. By a few applications of the  Kahane--Khintchine inequality we see that we can use whatever exponent in the definition of property $(\alpha)$.
\subsection{$\calR$-boundedness}
If $E$ and $F$ are Banach spaces, we denote the space of bounded linear operators from $E$ to $F$ by $\calL(E, F)$. If $E = F$ we simply write $\calL(E)$.
A family of operators $\mathcal{T} \subset \calL(E, F)$ is said to be $\calR$-bounded if
for all $N$, $T_1, \ldots, T_N \in \mathcal{T}$ and $e_1, \ldots, e_N \in E$ we have
\begin{equation*}
\Big(\E \Big| \sum_{i=1}^N \epsilon_i T_i e_i \Big|^2_F\Big)^{1/2} \le C \Big( \E \Big| \sum_{i=1}^N \epsilon_i e_i \Big|_E^2 \Big)^{1/2}.
\end{equation*}
The best constant $C$ is denoted by $\calR(\mathcal{T})$. The Kahane--Khintchine inequality shows that one can replace in the definition the exponent
$2$ with any $q \in [1, \infty)$.


\subsection{Random sums and duality}
For the definition of type and cotype of a Banach space the reader can e.g. consult the section 7 of the book \cite{HNVW2}. 
However, for the following lemma a reader not familiar with the notion of type needs only to know that
UMD spaces have non-trivial type (as all spaces of interest in this paper will be UMD).
See Section 7.4.f in \cite{HNVW2} for a proof.
\begin{lem}\label{lem:randomduality}
Let $E$ be a Banach space with non-trivial type and let $F \subset E^*$ be a closed subspace of $E^*$ which is norming for $E$. Let $p \in (1,\infty)$. Then
for all finite sequences $e_1, \ldots, e_N \in E$ we have
$$
\Big( \E \Big| \sum_{i=1}^N \epsilon_i e_i \Big|_E^p \Big)^{1/p} \lesssim \sup \Big\{ \Big| \sum_{i=1}^N \{e_i, e_i^*\}_E \Big|\Big\},
$$
where the supremum is taken over all choices $(e_i^*)_{i=1}^N$ in $F$ such that
$$
\Big( \E \Big| \sum_{i=1}^N \epsilon_i e_i^* \Big|_{E^*}^{p'} \Big)^{1/{p'}} \le 1. 
$$
The converse inequality trivially holds with a constant $1$.
\end{lem}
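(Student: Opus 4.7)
The plan is to deduce the inequality from Pisier's $K$-convexity theorem applied at the right exponent. Under the non-trivial type assumption on $E$, Pisier's theorem asserts that $E$, and hence $E^*$, is $K$-convex: the Rademacher projection
\[
\pi_{p'} \colon L^{p'}(\Omega; E^*) \to \Rad_{p'}(E^*), \qquad \pi_{p'}(\phi) = \sum_{j=1}^N \epsilon_j \E[\epsilon_j \phi],
\]
is bounded with some constant $K_{p'}$. The converse direction of the lemma is immediate from the definition of the $E^*$-norm, so only the $\lesssim$ direction requires work. Write $g := \sum_{i=1}^N \epsilon_i e_i \in L^p(\Omega; E)$.

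First I would check that $L^{p'}(\Omega; F)$ is norming for $L^p(\Omega; E)$. On the finite Rademacher probability space this follows from a pointwise selection: at each atom of $\Omega$ pick an almost-optimal unit vector in $F$, which is possible precisely because $F$ is norming for $E$. Hence
\[
\| g \|_{L^p(\Omega; E)} = \sup \Big\{ | \langle g, \phi \rangle | : \phi \in L^{p'}(\Omega; F),\ \| \phi \|_{L^{p'}(\Omega; F)} \le 1 \Big\}.
\]
For any such $\phi$ the coefficients $f_j := \E[\epsilon_j \phi]$ belong to the closed subspace $F$ (conditional expectation preserves a closed linear subspace), so $\pi_{p'}(\phi) = \sum_j \epsilon_j f_j$ with each $f_j \in F$. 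Since the inclusion $F \hookrightarrow E^*$ is isometric, the $K$-convexity bound transfers:
\[
\| \pi_{p'}(\phi) \|_{L^{p'}(\Omega; F)} = \| \pi_{p'}(\phi) \|_{L^{p'}(\Omega; E^*)} \le K_{p'} \| \phi \|_{L^{p'}(\Omega; E^*)} = K_{p'} \| \phi \|_{L^{p'}(\Omega; F)}.
\]
Finally, since $g$ lies in the first Rademacher chaos, the self-adjointness $\pi_p^* = \pi_{p'}$, visible from the explicit formula, yields $\langle g, \phi \rangle = \langle g, \pi_{p'}(\phi) \rangle = \sum_i \{ e_i, f_i \}_E$. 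Combining these ingredients gives the desired inequality with constant $K_{p'}$.

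The main obstacle is Pisier's $K$-convexity theorem itself, which is the deep input cited from \cite{HNVW2}. Beyond that, the only subtlety is the reduction from the full dual $E^*$ to the general norming subspace $F$; this works essentially for free because the Rademacher projection is coefficient extraction by conditional expectation, which automatically keeps values inside any closed subspace, bypassing any Helly-type $\sigma(E^*, E)$-approximation.
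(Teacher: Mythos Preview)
Your proof is correct. The paper does not prove this lemma but refers to Section~7.4.f of \cite{HNVW2}, and your argument via Pisier's $K$-convexity theorem is precisely the standard route taken there: the Rademacher projection on $L^{p'}(\Omega;E^*)$ is bounded by duality from its boundedness on $L^p(\Omega;E)$, and since coefficient extraction $f_j=\E[\epsilon_j\phi]$ keeps values in the closed subspace $F$, the norming-subspace reduction goes through without further work.
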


\subsection{Operator-valued shifts}
Here we give the definition of ordinary (i.e. one-parameter) operator-valued shifts as given by H\"anninen--Hyt\"onen \cite{HH}.
Let $E$ be a UMD space and $i_1, i_2 \ge 0$ be two indices. An operator-valued shift $S^{i_1,i_2}_{\calD^n}$ in $\R^n$ (defined using
a fixed dyadic grid $\calD^n$) is an operator of the form
$$
S^{i_1,i_2}_{\calD^n} f = \sum_{K \in \calD^n} \Delta_K^{i_2} A_{K} \Delta_K^{i_1} f, \qquad f \in L^1_{\loc}(\R^n;E),
$$
where $A_{K} f \colon \R^n \to E$ is an averaging operator with an operator-valued kernel
$a_K \colon \R^n \times \R^n \to \calL(E)$, that is,
$$
A_{K} f(x) = \frac{1_K(x)}{|K|} \int_K a_K(x, y) f(y)\ud y. 
$$
Provided that the family of kernels is $\calR$-bounded, i.e., 
$$
\calR(\{a_K(x, y) \in \calL(E)\colon K \in \calD^n, x, y \in K\}) \le C_a,
$$
H\"anninen--Hyt\"onen \cite{HH} proved that for all $1 < q < \infty$ we have
$$
\|S^{i_1,i_2}_{\calD^n} f \|_{L^q(\R^n; E)} \lesssim (\max(i_1, i_2)+1)C_a \|f\|_{L^q(\R^n;E)}.
$$
The implicit constant depends on the UMD-constant of $E$ and on $q$. The result actually holds with $\min(i_1, i_2)$ in place of $\max(i_1,i_2)$. In this paper we
need to prove the $\calR$-boundedness of shifts (under the assumption that $E$ also has Pisier's property $(\alpha)$), and we
do this with $\min$ in place of $\max$ (see Lemma \ref{lem:ShiftRBound}).

\subsubsection*{Operator-valued bi-parameter shifts}\label{ss:opbipardef}
Let $E$ be a UMD space satisfying the property $(\alpha)$ of Pisier.
An operator-valued bi-parameter dyadic shift in $\R^{n+m}$ with parameters $i_1$, $i_2$, $j_1$ and $j_2$ is an operator of the form
\begin{equation*}
S^{i_1,i_2,j_1,j_2}_{\calD^n, \calD^m} f 
= \mathop{\sum_{K \in \calD^n}}_{V \in \calD^m} \Delta_{K \times V}^{i_2, j_2} A_{K,V} \Delta_{K \times V}^{i_1, j_1} f, \quad f \in L^1_{\text{loc}}(\R^{n+m}; E).
\end{equation*}
Here each $A_{K,V}$ is an integral operator related to a kernel 
$$
a_{K,V} \colon \R^{n+m} \times \R^{n+m} \to \calL(E)
$$
by
$$
A_{K,V}f(x) =\frac{1_{K\times V}}{|K| |V|} \iint_{K \times V}a_{K,V}(x,y) f(y) \ud y.
$$
The family of kernels is assumed to be $\calR$-bounded in the sense that
\begin{equation*}
\calR\big(\{ a_{K,V}(x,y) \colon K \in \calD^n, V \in \calD^m, x,y \in K\times V \}\big) \le C_a.
\end{equation*}
We will (among other things) prove in Section \ref{sec:biopshift} that for all $p, q \in (1,\infty)$ we have
$$
\| S^{i_1,i_2,j_1,j_2}_{\calD^n, \calD^m} f \|_{L^q(\R^n; L^p(\R^m;E))}
\lesssim (\min(i_1, i_2)+1)(\min(j_1, j_2)+1)C_a \|f\|_{L^q(\R^n; L^p(\R^m;E))}.
$$

\subsection{Function lattices}\label{ss:fl}
An easy to read reference for this section is \cite{Lo} (see also \cite{LT} and \cite{ALV}).
A normed space $E$ is a Banach function space (or a function lattice) if the following four conditions hold.
Let $(\Omega, \mathcal{A}, \mu)$ be a $\sigma$-finite measure space.
\begin{enumerate}
\item Every $f \in E$ is a measurable function $f \colon \Omega \to \R$ (an equivalence class).
\item If $f \colon \Omega \to \R$ is measurable, $g \in E$ and $|f(\omega)| \le |g(\omega)|$ for $\mu$-a.e. $\omega \in \Omega$, then
$f \in E$ and $|f|_E \le |g|_E$.
\item There is an element $f \in E$ so that $f > 0$ (i.e. $f(\omega) > 0$ for $\mu$-a.e. $\omega \in \Omega$).
\item If $f_i ,f$ are non-negative, $f_i \in E$, $f_i \le f_{i+1}$, $f_i(\omega) \to f(\omega)$ for $\mu$-a.e. $\omega \in \Omega$ and $\sup_i |f_i|_E < \infty$,
we have $f \in E$ and $|f_i|_E \to |f|_E = \sup_i |f_i|_E$.
\end{enumerate}
\begin{rem}\label{rem:vague}
The definition of a function lattice seems to vary a little bit in the literature, and sometimes it is left quite vague what is the exact definition used.
Here we use the definition from \cite{ALV} and \cite{Lo}.
\end{rem}
Such a space is automatically a Banach space, and in fact a Banach lattice (for the definition and basic theory of Banach lattices see e.g. \cite{LT}).
If $g \colon \Omega \to \R$ is a measurable function such that $fg \in L^1(\mu)$ for all $f \in E$, we define
$$
e_g^* \colon E \to \R, e_g^*(f) = \int_{\Omega} f(\omega)g(\omega) \ud \mu(\omega).
$$
In this case $e_g^* \in E^*$. We define $E' \subset E^*$ to consist of those element of $E^*$ that have the form $e_g^*$ for some $g$ like above (and
we freely identify $e_g^*$ with $g$). The space $E'$ -- the K\"othe dual of $E$ -- is a Banach function space, and a norming subspace of $E^*$.

There is a condition called order continuity (the precise definition does not interest us here) of $E$ which is equivalent with the fact that $E' = E^*$. So the dual of an order continuous
Banach function space is a Banach function space (as $E'$ is always a Banach function space). We will be working with UMD Banach function spaces --
which we also call UMD function lattices. Such spaces are always reflexive,
and reflexive Banach lattices are order continuous. So in cases of interest to us $E' = E^*$. In this case $E$ automatically also satisfies the property $(\alpha)$ of Pisier.
This is because a Banach lattice satisfies Pisier's property $(\alpha)$ if and only if it has finite cotype (see e.g. Theorem 7.5.20 in the book \cite{HNVW2}). 
A UMD space certainly has finite cotype (see e.g. \cite{HNVW2}).

This generality covers most of the naturally arising examples of UMD spaces satisfying the property $(\alpha)$ of Pisier.
The only place where we really need that E is a function space (and not just a general UMD space satisfying Pisier's property $(\alpha)$) is Section \ref{sec:RbounbforPar}, where we prove $\calR$-boundedness results for various families of paraproducts.
That is to say, if one can generalise the results of Section \ref{sec:RbounbforPar}, then this restriction can be lifted in other key results.

A key reason why we use function lattices instead of Banach lattices is because we want to use maximal function estimates by Bourgain \cite{Bo} and
Rubio de Francia \cite{Ru}. Function lattices are certainly more convenient to use in other aspects too, but it could be the case that
most other estimates could be performed in Banach lattices.

\subsection{Estimates for martingales}\label{ss:estimates}
We collect in this subsection a plethora of various estimates, many of them of standard nature.
The main aim is Corollary \ref{cor:biparmarSEQ}.
The results are stated in the bi-parameter case.
For clarity we give some proofs.
\begin{lem}\label{lem:biparmar1}
Let $E$ be a UMD space. Then for all $p, q \in (1, \infty)$ and fixed signs $\epsilon_I$, $\epsilon_J$ we have
$$
\|f\|_{L^q(\R^n; L^p(\R^m; E))} \sim \Big\| \sum_{I \in \calD^n} \epsilon_I \Delta_I^1 f \Big\|_{L^q(\R^n; L^p(\R^m; E))}
\sim \Big\| \sum_{J \in \calD^m} \epsilon_J \Delta_J^2 f \Big\|_{L^q(\R^n; L^p(\R^m; E))}.
$$
\end{lem}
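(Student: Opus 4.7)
The plan is to reduce both claims to the one-parameter fact that, for any UMD Banach space $X$ and any $s \in (1,\infty)$, the Haar system on $\R^d$ is an unconditional basis of $L^s(\R^d;X)$; equivalently, for any signs $\eta_I \in \{-1,+1\}$ and any $g \in L^s(\R^d;X)$,
$$
\Big\| \sum_I \eta_I \Delta_I g \Big\|_{L^s(\R^d;X)} \sim \|g\|_{L^s(\R^d;X)}.
$$
This is a standard consequence of the UMD property: one refines the dyadic filtration so that Haar coefficients are added one at a time, and then applies the Burkholder inequality of the UMD definition in both directions (to $g$ with signs $\eta_I$, and to the sign-modified function with signs $\eta_I$ again).

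For the second equivalence (concerning $\Delta_J^2$), I would fix $x_1 \in \R^n$ and view $f(x_1,\cdot)$ as an element of $L^p(\R^m;E)$ via the identification from Subsection 2.3. Then $\Delta_J^2 f(x_1,\cdot) = \Delta_J(f(x_1,\cdot))$ is the one-parameter dyadic martingale difference on $\R^m$. Since $E$ is UMD, the displayed equivalence applied with $X=E$, $d=m$, $s=p$ gives, pointwise in $x_1$,
$$
\Big\| \sum_{J \in \calD^m} \epsilon_J \Delta_J^2 f(x_1,\cdot) \Big\|_{L^p(\R^m;E)} \sim \|f(x_1,\cdot)\|_{L^p(\R^m;E)}.
$$
Raising to the $q$-th power and integrating over $x_1 \in \R^n$ yields the desired equivalence in $L^q(\R^n; L^p(\R^m;E))$.

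For the first equivalence (concerning $\Delta_I^1$), I would instead use that $F := L^p(\R^m;E)$ is itself UMD, as recalled in the UMD subsection. Now I view $f$ as an $F$-valued function on $\R^n$, and note that $\Delta_I^1 f$ is precisely the $\calD^n$-dyadic martingale difference of this $F$-valued function. Applying the displayed equivalence with $X=F$, $d=n$, $s=q$ gives
$$
\Big\| \sum_{I \in \calD^n} \epsilon_I \Delta_I^1 f \Big\|_{L^q(\R^n;F)} \sim \|f\|_{L^q(\R^n;F)},
$$
which, unpacked, is the desired bi-parameter statement.

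I do not expect a genuine obstacle here; the argument is entirely a reduction to the one-parameter UMD theory together with the permanence of UMD under $L^p$. The only points of care are the refinement of the filtration needed to pass from scale-indexed martingale differences to cube-indexed Haar unconditionality, and a routine density argument from finite Haar expansions to all $f$ in the ambient $L^q(L^p(E))$ space, both of which are standard.
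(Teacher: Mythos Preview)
Your proof is correct and takes essentially the same approach as the paper: for $\Delta_I^1$ you apply one-parameter UMD unconditionality to $f$ viewed as an $F=L^p(\R^m;E)$-valued function on $\R^n$, and for $\Delta_J^2$ you apply it fiberwise in $x_1$ to $f(x_1,\cdot)\in L^p(\R^m;E)$, exactly as the paper does.
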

\begin{proof}
The first one is seen by using the fact that $F := L^p(\R^m; E)$ is a UMD space and expanding
the function $f \colon \R^n \to F$. The second one is seen by expanding $f(x_1, \cdot) \colon \R^m \to E$
for each fixed $x_1 \in \R^n$.
\end{proof}
\begin{lem}\label{lem:biparmar}
Let $E$ be a UMD space satisfying Pisier's property $(\alpha)$. Then
for all $p, q \in (1, \infty)$ and fixed signs $\epsilon_{I,J}$ we have
$$
\|f\|_{L^q(\R^n; L^p(\R^m; E))} \sim  \Big\|\mathop{\sum_{I \in \calD^n}}_{J \in \calD^m} \epsilon_{I, J} \Delta_{I \times J} f\Big\|_{L^q(\R^n; L^p(\R^m;E))}.
$$
\end{lem}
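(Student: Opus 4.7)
The natural plan is to pass via random signs. Write $F = L^q(\R^n; L^p(\R^m; E))$. Since $E$ is UMD and has Pisier's property $(\alpha)$, so does $F$ (by the stability statements quoted in the excerpt). The target equivalence has fixed signs $\epsilon_{I,J}$, so the whole point of invoking $(\alpha)$ is to relate these to \emph{tensor-product} random signs $\epsilon_I \epsilon_J'$, which can then be stripped off one parameter at a time using the one-parameter UMD statement of Lemma \ref{lem:biparmar1}.

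\emph{Step 1: an auxiliary randomized equivalence.} I first want to show
$$
\Big(\E\E'\Big\|\sum_{I,J}\epsilon_I\epsilon_J'\Delta_{I\times J}f\Big\|_F^q\Big)^{1/q}\sim \|f\|_F,
$$
where $(\epsilon_I)$ and $(\epsilon_J')$ are independent Rademacher sequences. Freeze $\epsilon'$, set $h_{\epsilon'}=\sum_J\epsilon_J'\Delta_J^2 f$, and notice $\sum_{I,J}\epsilon_I\epsilon_J'\Delta_{I\times J}f=\sum_I\epsilon_I\Delta_I^1 h_{\epsilon'}$. Applying Lemma \ref{lem:biparmar1} to $h_{\epsilon'}$ viewed as a function $\R^n\to L^p(\R^m;E)$ gives $\|\sum_I\epsilon_I\Delta_I^1 h_{\epsilon'}\|_F\sim \|h_{\epsilon'}\|_F$ with constants independent of $\epsilon,\epsilon'$; raising to the $q$th power and integrating over $\E\E'$ reduces matters to $(\E'\|h_{\epsilon'}\|_F^q)^{1/q}\sim\|f\|_F$, which is again Lemma \ref{lem:biparmar1} (the second-parameter version) integrated over $\epsilon'$.

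\emph{Step 2: inserting the fixed signs via property $(\alpha)$.} Property $(\alpha)$ for $F$ says that for any scalars $\alpha_{I,J}$ in the unit disc and any $f_{I,J}\in F$,
$$
\Big(\E\E'\Big\|\sum_{I,J}\epsilon_I\epsilon_J'\alpha_{I,J}f_{I,J}\Big\|_F^2\Big)^{1/2}\lesssim \Big(\E\E'\Big\|\sum_{I,J}\epsilon_I\epsilon_J' f_{I,J}\Big\|_F^2\Big)^{1/2};
$$
applying this with $\alpha_{I,J}=\epsilon_{I,J}\in\{-1,+1\}$ and then reversing the roles yields a two-sided equivalence, and Kahane--Khintchine lets me write both sides with the $L^q$-norm in $\omega,\omega'$ instead of $L^2$. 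Consequently
$$
\Big(\E\E'\Big\|\sum_{I,J}\epsilon_I\epsilon_J'\epsilon_{I,J}\Delta_{I\times J}f\Big\|_F^q\Big)^{1/q}\sim \Big(\E\E'\Big\|\sum_{I,J}\epsilon_I\epsilon_J'\Delta_{I\times J}f\Big\|_F^q\Big)^{1/q}\sim \|f\|_F,
$$
the last step being Step 1.

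\emph{Step 3: identifying the left side with the deterministic sum.} Set $g:=\sum_{I,J}\epsilon_{I,J}\Delta_{I\times J}f$. Since the $\Delta_{I\times J}$ are orthogonal Haar projections, $\Delta_{I\times J}g=\epsilon_{I,J}\Delta_{I\times J}f$. Plugging $g$ into the randomized equivalence of Step 1 gives
$$
\Big(\E\E'\Big\|\sum_{I,J}\epsilon_I\epsilon_J'\epsilon_{I,J}\Delta_{I\times J}f\Big\|_F^q\Big)^{1/q}=\Big(\E\E'\Big\|\sum_{I,J}\epsilon_I\epsilon_J'\Delta_{I\times J}g\Big\|_F^q\Big)^{1/q}\sim \|g\|_F.
$$
Combining this with Step 2 yields $\|g\|_F\sim \|f\|_F$, which is exactly the claimed equivalence.

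The only mildly delicate point is checking that the $(\alpha)$-inequality as stated (scalar coefficients in the \emph{complex} unit disc, $E$-valued vectors, $L^2$ on $\Omega\times\Omega'$) can be upgraded to a two-sided inequality with vectors in $F$ and with the $L^q$ exponent in place of $L^2$; but this is exactly what the two-sided Kahane--Khintchine equivalence (already recorded in the excerpt) together with the signed choice $\alpha_{I,J}=\epsilon_{I,J}$ is designed to give, so no genuine obstacle arises.
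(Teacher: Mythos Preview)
Your proof is correct and follows essentially the same route as the paper's: both first establish the randomized equivalence $\|f\|_F\sim \E\E'\|\sum_{I,J}\epsilon_I\epsilon_J'\Delta_{I\times J}f\|_F$ via two applications of Lemma~\ref{lem:biparmar1}, then use property $(\alpha)$ of $F$ to insert the fixed signs $\epsilon_{I,J}$, and finally apply the randomized equivalence to $g=\sum_{I,J}\epsilon_{I,J}\Delta_{I\times J}f$. The only cosmetic difference is that you carry the exponent $q$ through the expectations (invoking Kahane--Khintchine to justify the passage), whereas the paper works directly with $\E\E'\|\cdot\|_F$.
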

\begin{proof}
Using Lemma \ref{lem:biparmar1} twice and taking expectations we get
\begin{equation}\label{eq:eq1}
\|f\|_{L^q(\R^n; L^p(\R^m;E))} 
\sim \E\E' \Big\| \sum_{I \in \calD^n} \sum_{J \in \calD^m} \epsilon_I \epsilon_J' \Delta_{I \times J} f \Big\|_{L^q(\R^n; L^p(\R^m; E))}.
\end{equation}
Using the fact that $L^q(\R^n; L^p(\R^m;E))$ satisfies Pisier's property $(\alpha)$ we get
\begin{align*}
\E\E' \Big\| \sum_{I \in \calD^n}& \sum_{J \in \calD^m} \epsilon_I \epsilon_J' \Delta_{I \times J} f \Big\|_{L^q(\R^n; L^p(\R^m; E))} \\
&\sim \E\E' \Big\| \sum_{I \in \calD^n} \sum_{J \in \calD^m} \epsilon_I \epsilon_J' \epsilon_{I, J} \Delta_{I \times J} f \Big\|_{L^q(\R^n; L^p(\R^m; E))}.
\end{align*}
Applying the identity \eqref{eq:eq1} to the function $F = \sum_{I \in \calD^n} \sum_{J \in \calD^m} \epsilon_{I, J} \Delta_{I \times J} f$ we get the claim.
\end{proof}
\begin{lem}
Let $E$ be a UMD space satisfying Pisier's property $(\alpha)$. Then
for all $p, q \in (1, \infty)$ we have
\begin{align*}
\E \Big\| \sum_j \epsilon_j f_j \Big\|_{L^q(\R^n; L^p(\R^m;E))} &\sim 
\E \Big\|\sum_j \mathop{\sum_{I \in \calD^n}}_{J \in \calD^m} \epsilon_{j,I, J} \Delta_{I \times J} f_j \Big\|_{L^q(\R^n; L^p(\R^m;E))} \\
&\sim \E \Big\| \sum_j \sum_{I \in \calD^n} \epsilon_{j, I} \Delta_I^1 f_j \Big\|_{L^q(\R^n; L^p(\R^m;E))} \\
&\sim \E \Big\| \sum_j \sum_{J \in \calD^m} \epsilon_{j, J} \Delta_J^2 f_j \Big\|_{L^q(\R^n; L^p(\R^m;E))}.
\end{align*}
\end{lem}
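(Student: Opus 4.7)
The plan is to reduce each of the three claimed equivalences to its single-function analogue — Lemma \ref{lem:biparmar1} or Lemma \ref{lem:biparmar} — applied to the Banach space $F := L^q(\R^n; L^p(\R^m; E))$, which is UMD (as $E$ is) and satisfies Pisier's property $(\alpha)$ (as $E$ does and $(\alpha)$ passes to $L^p$-lattices). The extra randomisation in $j$ will then be absorbed using property $(\alpha)$ on $F$.

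For the bi-parameter equivalence, I first apply Lemma \ref{lem:biparmar} to $g := \sum_j \epsilon_j f_j$ with a Rademacher family $(\epsilon'_{I,J})$ in place of the fixed signs; since the constants in that lemma do not depend on the chosen signs, I may average over both $(\epsilon_j)$ and $(\epsilon'_{I,J})$ and pull $\sum_j \epsilon_j$ through the linear operator $\Delta_{I\times J}$ to obtain
\begin{equation*}
\E\Big\|\sum_j \epsilon_j f_j\Big\|_F
\sim \E\E'\Big\|\sum_j \mathop{\sum_{I \in \calD^n}}_{J \in \calD^m} \epsilon_j \epsilon'_{I,J} \Delta_{I\times J} f_j\Big\|_F.
\end{equation*}
To convert the product sign $\epsilon_j \epsilon'_{I,J}$ into a single sign $\epsilon''_{j,I,J}$, I treat $(I,J)$ as one index and apply Pisier's property $(\alpha)$ on $F$ with the $\pm 1$-valued ``scalars'' $\alpha_{j,(I,J)} := \epsilon''_{j,I,J}$; the reverse inequality follows by re-applying $(\alpha)$ to the vectors $\alpha_{j,I,J}\Delta_{I\times J} f_j$ and using $\alpha_{j,I,J}^2 = 1$. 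Then, since multiplying an independent Rademacher family by independent $\pm 1$ signs (from a further, independent source) preserves its joint distribution, conditioning on $(\epsilon'')$ and using Fubini yields
\begin{equation*}
\E\E'\E''\Big\|\sum_{j,I,J} \epsilon_j \epsilon'_{I,J} \epsilon''_{j,I,J} \Delta_{I\times J} f_j\Big\|_F = \E''\Big\|\sum_{j,I,J} \epsilon''_{j,I,J} \Delta_{I\times J} f_j\Big\|_F,
\end{equation*}
which, after renaming $\epsilon''$ to $\epsilon$, is the first claim.

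The two one-parameter equivalences are proved identically, with Lemma \ref{lem:biparmar1} in place of Lemma \ref{lem:biparmar} and property $(\alpha)$ invoked with pair indices $(j, I)$, respectively $(j, J)$. The only point requiring real care is the sign-combining step: once one has the general fact that, on a UMD space with property $(\alpha)$, the $\epsilon_j \epsilon'_k$- and $\epsilon_{j,k}$-randomisations of a double sum are equivalent up to constants, the rest of the argument is a mechanical invocation of the preceding two lemmas.
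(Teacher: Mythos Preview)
Your proof is correct and follows essentially the same route as the paper: apply Lemma~\ref{lem:biparmar} (respectively Lemma~\ref{lem:biparmar1}) to the randomised sum $\sum_j \epsilon_j f_j$, then use property $(\alpha)$ of $F = L^q(\R^n;L^p(\R^m;E))$ to pass between the product signs $\epsilon_j\epsilon'_{I,J}$ and the doubly-indexed signs $\epsilon''_{j,I,J}$, and finally collapse the triple expectation by a distributional identity. One small slip: in the last step you should condition on $(\epsilon,\epsilon')$, not on $(\epsilon'')$ --- for fixed $(\epsilon,\epsilon')$ the family $\{\epsilon_j\epsilon'_{I,J}\epsilon''_{j,I,J}\}_{j,I,J}$ has the same distribution (in $\epsilon''$) as $\{\epsilon''_{j,I,J}\}$, whereas conditioning on $\epsilon''$ leaves you with a product Rademacher family multiplied by fixed signs, which does \emph{not} in general have the same distribution as the product family.
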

\begin{proof}
We only prove the first estimate -- the others are proved in the same way (just use Lemma \ref{lem:biparmar1} instead of Lemma \ref{lem:biparmar}).
Using Lemma \ref{lem:biparmar} to the function $\sum_j \epsilon_j f_j$ and taking expectations we get that
$$
\E \Big\| \sum_j \epsilon_j f_j \Big\|_{L^q(\R^n; L^p(\R^m;E))}
\sim \E \E' \Big\|\sum_j \mathop{\sum_{I \in \calD^n}}_{J \in \calD^m} \epsilon_j \epsilon_{I, J}' \Delta_{I \times J} f_j \Big\|_{L^q(\R^n; L^p(\R^m;E))}.
$$
Using the fact that $L^q(\R^n; L^p(\R^m;E))$ satisfies Pisier's property $(\alpha)$ we get
\begin{align*}
\E \E' \Big\|\sum_j \mathop{\sum_{I \in \calD^n}}_{J \in \calD^m} \epsilon_j \epsilon_{I, J}' &\Delta_{I \times J} f_j \Big\|_{L^q(\R^n; L^p(\R^m;E))} \\
&\sim \E \E' \E'' \Big\|\sum_j \mathop{\sum_{I \in \calD^n}}_{J \in \calD^m} \epsilon_j \epsilon_{I, J}' \epsilon_{j, I, J}'' \Delta_{I \times J} f_j \Big\|_{L^q(\R^n; L^p(\R^m;E))} \\
&= \E'' \Big\|\sum_j \mathop{\sum_{I \in \calD^n}}_{J \in \calD^m} \epsilon_{j, I, J}'' \Delta_{I \times J} f_j \Big\|_{L^q(\R^n; L^p(\R^m;E))}.
\end{align*}
\end{proof}
In the case that $E$ is a UMD function lattice (notice that in this case $|e| \in E$ and $|e|^{\alpha} \in E$ are defined in the natural pointwise way for every $e \in E$)
we prefer square function bounds. The previous estimates are translated to them using the following lemma. The important thing for us
is that it holds with all UMD function lattices.
\begin{lem}\label{lem:KKlemma}
Let $E$ be a Banach function space with finite cotype.
For all $q \in (1,\infty)$ we have
$$
\Big\| \Big(\sum_j |f_j|^2 \Big)^{1/2} \Big\|_{L^q(\R^n; E)} \sim \E \Big\| \sum_j \epsilon_j f_j \Big\|_{L^q(\R^n; E)}.
$$
\end{lem}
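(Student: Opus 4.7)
The plan is to recognise the statement as an $L^q$-valued instance of the classical \emph{Maurey--Khintchine square function inequality} for Banach lattices of finite cotype, and to combine it with Kahane--Khintchine.

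First I would pass from the target $E$ to the ambient space $F := L^q(\R^n; E)$. Since $q \in (1,\infty)$, the Bochner space $F$ is itself a Banach function space (over the product measure with the underlying measure space of $E$) and it inherits finite cotype from $E$ and $L^q$; indeed, a mixed $L^q(E)$ of a space of cotype $r$ has cotype $\max(q,r) < \infty$. Rewriting the desired equivalence in terms of $F$, it becomes
\begin{equation*}
\Big\|\Big(\sum_j |f_j|^2\Big)^{1/2}\Big\|_{F} \;\sim\; \E\Big\|\sum_j \epsilon_j f_j\Big\|_{F},
\end{equation*}
where the square function on the left is computed pointwise in the lattice $F$.

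Next I would invoke the theorem of Maurey (valid in any Banach lattice of finite cotype; see e.g.\ Theorem 7.2.13 of \cite{HNVW2}) which states that for any finite sequence $(x_j)$ in such a lattice $F$,
\begin{equation*}
\Big(\E\Big\|\sum_j \epsilon_j x_j\Big\|_F^2\Big)^{1/2} \;\sim\; \Big\|\Big(\sum_j |x_j|^2\Big)^{1/2}\Big\|_F.
\end{equation*}
Applying this with $x_j = f_j$ gives the right-hand side with the $L^2$ average of the Rademacher sum. To convert that to an $L^1$ average, I would invoke the Kahane--Khintchine inequality (already recorded in the excerpt), which yields
\begin{equation*}
\Big(\E\Big\|\sum_j \epsilon_j f_j\Big\|_F^2\Big)^{1/2} \;\sim\; \E\Big\|\sum_j \epsilon_j f_j\Big\|_F.
\end{equation*}
Chaining the two equivalences completes the proof.

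The only potentially delicate point is confirming that $L^q(\R^n; E)$ genuinely is a Banach function space with finite cotype, so that Maurey's theorem applies in the form stated. This is standard but worth verifying: $F$ is naturally realised as a function lattice on $\R^n \times \Omega$, and its cotype is controlled by the cotypes of $L^q$ and $E$. Everything else is a direct combination of results already cited in the paper. I do not expect any real obstacle beyond this bookkeeping.
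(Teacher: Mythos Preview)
Your proposal is correct and uses essentially the same ingredients as the paper: the Khintchine--Maurey lattice inequality (Theorem~7.2.13 in \cite{HNVW2}) combined with Kahane--Khintchine. The only organisational difference is that the paper first uses Kahane--Khintchine (and Fubini) to reduce to the pointwise estimate $\big(\E|\sum_j \epsilon_j e_j|_E^2\big)^{1/2} \sim \big|\big(\sum_j |e_j|^2\big)^{1/2}\big|_E$ and then invokes Maurey in $E$ itself, whereas you apply Maurey directly in $F = L^q(\R^n;E)$, which costs you the extra verification that $F$ inherits finite cotype; both routes are equally valid.
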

\begin{proof}
Applying Kahane--Khintchine inequality multiple times we see that
$$
\E \Big\| \sum_j \epsilon_j f_j \Big\|_{L^q(\R^n; E)} \sim \Big\| \Big( \E \Big| \sum_j \epsilon_j f_j \Big|_E^2\Big)^{1/2} \Big\|_{L^q(\R^n; E)}.
$$
So things boil down to the equivalence
\begin{equation}\label{eq:bd}
\Big( \E \Big| \sum_j \epsilon_j e_j \Big|_E^2\Big)^{1/2} \sim \Big|\Big(\sum_j |e_j|^2 \Big)^{1/2}\Big|_E
\end{equation}
for all $e_j \in E$. But \eqref{eq:bd} holds in all Banach lattices with finite cotype -- this is a theorem of Khintchine--Maurey. For a proof see
Theorem 7.2.13 in \cite{HNVW2}.
\end{proof}
\begin{cor}\label{cor:biparmarSEQ}
Let $E$ be a UMD function lattice.
For all $p, q \in (1, \infty)$ we have
\begin{align*}
\Big\| \Big(\sum_j |f_j|^2 \Big)^{1/2} \Big\|_{L^q(\R^n; L^p(\R^m;E))} 
&\sim \Big\| \Big( \sum_j \mathop{\sum_{I \in \calD^n}}_{J \in \calD^m} |\Delta_{I \times J} f_j|^2 \Big)^{1/2} \Big\|_{L^q(\R^n; L^p(\R^m;E))} \\
&\sim  \Big\| \Big( \sum_j \sum_{I \in \calD^n} |\Delta_I^1 f_j|^2 \Big)^{1/2} \Big\|_{L^q(\R^n; L^p(\R^m;E))} \\
&\sim \Big\| \Big( \sum_j \sum_{J \in \calD^m} |\Delta_J^2 f_j|^2 \Big)^{1/2} \Big\|_{L^q(\R^n; L^p(\R^m;E))}.
\end{align*}
\end{cor}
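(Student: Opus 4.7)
The plan is to sandwich all three claimed equivalences between two applications of Lemma~\ref{lem:KKlemma}, using the previous (unnumbered) lemma on Rademacher sums as the intermediate bridge. The corollary is of the form $(\text{square function}) \sim (\text{square function})$, whereas the previous lemma is of the form $(\text{Rademacher}) \sim (\text{Rademacher})$; Lemma~\ref{lem:KKlemma} is exactly the translation between the two worlds.

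First I would apply Lemma~\ref{lem:KKlemma}, taking the role of its ambient $E$ to be $F := L^p(\R^m;E)$. The space $F$ is itself a Banach function lattice over the product measure space, and it inherits finite cotype from $E$, so the hypothesis of Lemma~\ref{lem:KKlemma} is satisfied. This gives
$$
\Big\| \Big(\sum_j |f_j|^2 \Big)^{1/2} \Big\|_{L^q(\R^n; L^p(\R^m;E))} \sim \E \Big\| \sum_j \epsilon_j f_j \Big\|_{L^q(\R^n; L^p(\R^m;E))}.
$$
Since $E$ is UMD and, as noted in Section~\ref{ss:fl}, automatically has Pisier's property $(\alpha)$, the previous lemma then identifies this Rademacher norm with any of the three Rademacher norms built from $\Delta_{I\times J}f_j$, $\Delta_I^1 f_j$, or $\Delta_J^2 f_j$. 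Finally, I would invoke Lemma~\ref{lem:KKlemma} once more, now with the countable index set enlarged to $\{(j,I,J)\}$, $\{(j,I)\}$, or $\{(j,J)\}$ respectively, to convert each of these Rademacher norms back into the corresponding square function norm on the right-hand side of the statement.

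There is no substantial obstacle here: the corollary is a formal consequence of the two preceding lemmas together with the standing structural facts that a UMD function lattice $E$ satisfies UMD, Pisier's $(\alpha)$, and finite cotype, and that all three properties pass to $L^p(\R^m;E)$. The only small point to verify is that the outer application of Lemma~\ref{lem:KKlemma} uses $L^p(\R^m;E)$ as its function-lattice target, while the inner applications use the same lattice but with different countable index sets; this is entirely within the scope of the lemma as stated.
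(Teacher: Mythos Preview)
Your proposal is correct and matches the paper's intended argument exactly: the corollary is stated without proof precisely because it follows by sandwiching the preceding unnumbered lemma between two applications of Lemma~\ref{lem:KKlemma}, using that a UMD function lattice (and hence $L^p(\R^m;E)$) has finite cotype and Pisier's property~$(\alpha)$. There is nothing to add.
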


\subsection{Estimates for maximal functions}\label{ss:maxestimates}
We introduce the lattice maximal function theory of Bourgain \cite{Bo} and Rubio de Francia \cite{Ru}.
Let $E$ be a Banach function space. For each (simple) locally integrable function $f \colon \R^n \to E$ we define
$M_{\calD^n, E}f \colon \R^n \to E$ by setting
$$
M_{\calD^n, E}f(x, \omega) = \mathop{\sup_{I \in \calD^n}}_{x \in I} \frac{1}{|I|} \int_I |f(y, \omega)|\,dy.
$$

The following definitions are in line with our usual notational conventions.
If $f \colon \R^{n+m} \to E$
we define $M_{\calD^n, E}^1f \colon \R^{n+m} \to E$ by setting $M^1_{\calD^n, E} f(x_1, x_2)
=  M_{\calD^n, E}(f(\cdot, x_2))(x_1)$. The operator $M^2_{\calD^m, E}$ is defined similarly.

The bi-parameter strong maximal function $\calM_{\calD^n, \calD^m, E}$ is defined for $f \colon \R^{n+m} \to E$ by setting
$$
\calM_{\calD^n, \calD^m, E}f(x_1,x_2, \omega) = \mathop{\sup_{I \in \calD^n}}_{J \in \calD^m} \frac{1_I(x_1)1_J(x_2)}{|I||J|} \iint_{I \times J} |f(y_1, y_2, \omega)|\ud y_1 \ud y_2.
$$
If $E$ is the scalar field we simply write $M_{\calD^n}$ etc.

The following proposition is due to Bourgain \cite{Bo} and Rubio de Francia \cite{Ru} in the case of the torus. A weighted version of this in $\R^n$ is proved in \cite{GMT}.
See also \cite{ALV} and \cite{Lo}. Again, the point made in Remark \ref{rem:vague} applies here.
\begin{prop}\label{prop:Rubio}
Let $E$ be a UMD function lattice. Then for all $q \in (1,\infty)$ we have
$$
\|M_{\calD^n, E}f\|_{L^q(\R^n; E)} \lesssim \|f\|_{L^q(\R^n; E)}, \qquad f \in L^q(\R^n;E).
$$
\end{prop}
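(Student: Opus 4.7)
The plan is to reduce the statement to the Bourgain--Rubio de Francia lattice Hardy--Littlewood theorem for UMD function lattices, and then to transfer from its non-dyadic $\R^n$ formulation to the dyadic one. First I would identify the dyadic maximal function with a Doob-type maximal function: writing $E_k$ for the conditional expectation with respect to the $\sigma$-algebra generated by $\calD^n_k$, and interpreting $|\cdot|$ and $\sup$ pointwise in the lattice $E$, we have
$$M_{\calD^n, E} f(x, \omega) = \sup_{k \in \Z} E_k(|f|)(x, \omega).$$
So the desired estimate is equivalent to a lattice Doob inequality for the dyadic filtration with values in a UMD function lattice.

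Next I would invoke the Bourgain--Rubio de Francia theorem as formulated in \cite{Bo}, \cite{Ru}, with the extension to $\R^n$ provided by \cite{GMT}, \cite{ALV}, \cite{Lo}: the lattice Hardy--Littlewood maximal operator is bounded on $L^q(\R^n; E)$ for every UMD function lattice $E$ and every $q \in (1,\infty)$. The dyadic variant is then immediate, since the standard (non-dyadic) Hardy--Littlewood maximal function dominates $M_{\calD^n, E}f$ pointwise (or, alternatively, one can appeal to the $\R^n$-native dyadic argument in \cite{GMT}).

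The heart of the Bourgain--Rubio de Francia proof uses the UMD property via a delicate combination of martingale estimates and the lattice structure of $E$: one linearises the pointwise supremum by a measurable selection $I(x) \in \calD^n$, randomises over the dyadic generations by independent Rademacher variables, and compares the resulting expressions with lattice square functions by means of Kahane--Khintchine and Khintchine--Maurey (in the spirit of Lemma \ref{lem:KKlemma}). The main obstacle, and the reason the theorem is genuinely deep, is that UMD by itself is not sufficient to bound a pointwise supremum; the function-lattice structure is essential in order to make sense of $|f(y,\omega)|$ inside the integral and to convert lattice suprema into square functions that UMD can handle. This is precisely why the statement is restricted to UMD function lattices rather than to general UMD spaces with Pisier's property $(\alpha)$, as emphasised in the discussion around Remark \ref{rem:vague}.
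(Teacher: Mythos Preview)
Your proposal is correct and aligns with the paper's treatment: the paper does not give a proof of this proposition at all, but simply attributes it to Bourgain \cite{Bo} and Rubio de Francia \cite{Ru} (for the torus), to \cite{GMT} for the $\R^n$ version, and refers to \cite{ALV}, \cite{Lo} for further discussion. Your reduction of the dyadic statement to the known non-dyadic lattice Hardy--Littlewood theorem via pointwise domination is the natural way to fill in the citation, and your sketch of the underlying Bourgain--Rubio de Francia argument is accurate supplementary context.
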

\begin{rem}
If $E$ is a UMD function lattice, then so is $E^* = E'$.
Therefore, we also have for each $q \in (1,\infty)$ that
$$
\|M_{\calD^n, E^*}g\|_{L^q(\R^n; E^*)} \lesssim \|g\|_{L^q(\R^n; E^*)}, \qquad g \in L^q(\R^n;E^*).
$$
\end{rem}

Proposition \ref{prop:Rubio} extends for instance to the case where the function lattice $E$ is replaced with the function lattice
$L^p(\R^m;l^r(E))$, where $p,r \in (1, \infty)$.
Let $q \in (1, \infty)$. A function $f \in L^q(\R^n; L^p(\R^m;\ell^r(E)))$ can be viewed as
$f=\{f_j\}_{j \in \Z}$, where each $f_j$ is an $E$-valued function on $\R^{n+m}$, and 
$$
\|f\|_{L^q(\R^n; L^p(\R^m;\ell^r(E)))}
= \Big \| \Big( \sum_j |f_j|^r\Big)^{1/r} \Big \|_{L^q(\R^n; L^p(\R^m;E))}.
$$
The maximal function $M_{\calD^n, L^p(\R^m;\ell^r(E))}$ acting on $f$ gives
$$
M_{\calD^n, L^p(\R^m;\ell^r(E))}f=\{M^1_{\calD^n,E}f_j\}_j.
$$
Thus, the extension of Proposition \ref{prop:Rubio} gives the following corollary:

\begin{cor}\label{cor:FS_M1}
Let $E$ be a UMD function lattice.
For all $p, q, r \in (1,\infty)$ we have
\begin{equation*}
\Big \| \Big( \sum_j |M_{\calD^n,E}^1 f_j|^r \Big)^{1/r} \Big\|_{L^q(\R^n; L^p(\R^m;E))}
\lesssim \Big \| \Big( \sum_j | f_j|^r \Big)^{1/r} \Big\|_{L^q(\R^n; L^p(\R^m;E))}.
\end{equation*}
\end{cor}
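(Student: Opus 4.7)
The plan is to derive the corollary as a direct application of Proposition \ref{prop:Rubio} with the function lattice $E$ replaced by the enlarged lattice $F := L^p(\R^m;\ell^r(E))$, as already suggested by the discussion preceding the corollary.

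First I would verify that $F$ is a UMD function lattice. Since $E$ is a UMD function lattice on some $\sigma$-finite $(\Omega,\mathcal{A},\mu)$ and $r \in (1,\infty)$, the space $\ell^r(E) = L^r(\mathbb{Z};E)$ is a UMD function lattice over the product of counting measure on $\mathbb{Z}$ with $\mu$ (UMD and the lattice axioms of Section \ref{ss:fl} are preserved by forming $L^r$-Bochner spaces for $r \in (1,\infty)$). Repeating this with $L^p(\R^m;\cdot)$ in place of $\ell^r$ shows that $F$ is again a UMD function lattice, so Proposition \ref{prop:Rubio} applies to it.

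Next I would set up the identification: a sequence $\{f_j\}_{j}$ of $E$-valued functions on $\R^{n+m}$ is viewed as a single $F$-valued function
\[
\tilde f \colon \R^n \to F, \qquad \tilde f(x_1)(x_2, j, \omega) = f_j(x_1, x_2, \omega),
\]
where $(x_2, j, \omega)$ are the lattice coordinates of $F$. Under this identification one has
\[
\|\tilde f\|_{L^q(\R^n; F)} = \Big\| \Big( \sum_j |f_j|^r \Big)^{1/r} \Big\|_{L^q(\R^n; L^p(\R^m;E))}.
\]
The $F$-valued maximal function $M_{\calD^n, F}$ is defined pointwise in the lattice coordinates of $F$ by taking suprema of averages of $|\tilde f(\cdot)|_{F\text{-lattice}}$; since integration in $y_1$ and the supremum over dyadic cubes $I \ni x_1$ commute with the lattice operations in $F$, evaluating at $(x_2,j,\omega)$ simply yields $M^1_{\calD^n,E} f_j(x_1, x_2, \omega)$.

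Finally, Proposition \ref{prop:Rubio} applied to $F$ gives $\|M_{\calD^n, F}\tilde f\|_{L^q(\R^n;F)} \lesssim \|\tilde f\|_{L^q(\R^n;F)}$, which, translated through the identifications above, is exactly the asserted inequality. There is no real obstacle here; the only point requiring care is verifying that $F$ meets the hypotheses of Proposition \ref{prop:Rubio}, and this reduces to the standard stability of the UMD and function-lattice properties under $\ell^r$- and $L^p$-valued constructions recalled in Section \ref{ss:fl}.
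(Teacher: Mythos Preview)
Your proposal is correct and follows essentially the same approach as the paper: the paper derives the corollary directly from Proposition \ref{prop:Rubio} applied to the UMD function lattice $F = L^p(\R^m;\ell^r(E))$, identifying $M_{\calD^n,F}$ with $\{M^1_{\calD^n,E}f_j\}_j$ exactly as you do. Your verification that $F$ is a UMD function lattice and your explicit identification are slightly more detailed than what the paper spells out, but the argument is the same.
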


Corollary \ref{cor:FS_M1} in turn directly leads to Corollary \ref{cor:FS_Strong}:

\begin{cor}\label{cor:FS_Strong}
Let $E$ be a UMD function lattice.
For all $p, q, r \in (1,\infty)$ we have
$$
\Big\| \Big( \sum_j |\mathcal{M}_{\calD^n, \calD^m,E} f_j |^r \Big)^{1/r} \Big\|_{L^q(\R^{n}; L^p(\R^m;E))} \lesssim \Big\| \Big( \sum_{j \in \mathcal{J}} | f_j |^r \Big)^{1/r} \Big\|_{L^q(\R^{n}; L^p(\R^m;E))}.
$$
\end{cor}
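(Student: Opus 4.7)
The plan is to reduce the bi-parameter strong maximal function to an iteration of one-parameter lattice maximal functions and then apply Corollary \ref{cor:FS_M1} twice (once in each variable).

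First, I would establish the pointwise bound
\begin{equation*}
\calM_{\calD^n, \calD^m, E}f(x_1, x_2, \omega) \leq M_{\calD^n, E}^1\bigl(M_{\calD^m, E}^2 f\bigr)(x_1, x_2, \omega)
\end{equation*}
for every dyadic strong average. Indeed, if $x_1 \in I \in \calD^n$ and $x_2 \in J \in \calD^m$, Fubini allows to write the average over $I \times J$ as the iterated average, and on the inner integral the dyadic average over $J$ is bounded by $M_{\calD^m, E}^2 f(y_1, x_2, \omega)$ for every $y_1 \in I$; averaging in $y_1$ over $I$ and taking the supremum over $I$ then yields $M_{\calD^n, E}^1 (M_{\calD^m, E}^2 f)(x_1, x_2, \omega)$. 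Since $E$ is a function lattice and the bound is pointwise in all three variables, it survives the $\ell^r$-sum and the $L^q(L^p(E))$ norm.

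Next, I would observe that Corollary \ref{cor:FS_M1} has an immediate analogue with $M_{\calD^m, E}^2$ in place of $M_{\calD^n, E}^1$. This is just Proposition \ref{prop:Rubio} applied to the UMD function lattice $\ell^r(E)$ on $\R^m$ (giving $L^p(\R^m; \ell^r(E))$ estimates for $M_{\calD^m, E}^2$ for every fixed $x_1 \in \R^n$), followed by taking the $L^q(\R^n)$ norm; equivalently, it is Proposition \ref{prop:Rubio} applied to the UMD function lattice $\ell^r(E)$, in the role of the ambient Banach function space, on $\R^m$. Thus
\begin{equation*}
\BNorm{\Bigl(\sum_j |M_{\calD^m, E}^2 f_j|^r\Bigr)^{1/r}}{L^q(\R^n; L^p(\R^m; E))}
\lesssim \BNorm{\Bigl(\sum_j |f_j|^r\Bigr)^{1/r}}{L^q(\R^n; L^p(\R^m; E))}.
\end{equation*}

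Finally I would combine the two ingredients: by the pointwise bound and the lattice property of $E$,
\begin{equation*}
\BNorm{\Bigl(\sum_j |\calM_{\calD^n, \calD^m, E} f_j|^r\Bigr)^{1/r}}{L^q(L^p(E))}
\leq \BNorm{\Bigl(\sum_j |M_{\calD^n, E}^1(M_{\calD^m, E}^2 f_j)|^r\Bigr)^{1/r}}{L^q(L^p(E))}.
\end{equation*}
Applying Corollary \ref{cor:FS_M1} to the sequence $(M_{\calD^m, E}^2 f_j)_j$ and then the $M^2$-analogue above finishes the proof. The only step requiring any thought is the pointwise domination of the strong maximal by the iteration of two one-parameter ones in the lattice-valued setting, but this reduces to Fubini plus the definition of the lattice maximal function; once it is in hand, everything else is a clean iteration.
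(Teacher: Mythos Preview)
Your proof is correct and follows essentially the same approach as the paper: pointwise domination of the strong maximal function by an iteration of the two one-parameter lattice maximal functions, followed by two applications of the one-parameter bounds (Proposition~\ref{prop:Rubio} with $\ell^r(E)$ in place of $E$, and Corollary~\ref{cor:FS_M1}). The only cosmetic difference is that the paper iterates in the order $M_{\calD^m,E}^2 M_{\calD^n,E}^1$ rather than your $M_{\calD^n,E}^1 M_{\calD^m,E}^2$, which is immaterial.
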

\begin{proof}
We first apply the inequality $\mathcal{M}_{\calD^n, \calD^m,E} f_j \le M_{\calD^m,E}^2 M_{\calD^n,E}^1 f_j$. In the inner integral over $\R^m$ we use 
Proposition \ref{prop:Rubio} with $E$ replaced by $\ell^r(E)$  to obtain
$$
\Big\| \Big( \sum_j |\mathcal{M}_{\calD^n, \calD^m,E} f_j |^r \Big)^{1/r} \Big\|_{L^q(\R^{n}; L^p(\R^m;E))} \lesssim 
\Big\| \Big( \sum_j |M_{\calD^n,E}^1 f_j  |^r \Big)^{1/r} \Big\|_{L^q(\R^{n}; L^p(\R^m;E))}.
$$
Corollary \ref{cor:FS_M1} then readily gives the result.
\end{proof}

\section{Various estimates for operator-valued shifts}\label{sec:biopshift}
\subsection{$\calR$-boundedness results for operator-valued shifts}
For various reasons we need the following lemma on the $\calR$-boundedness of one-parameter dyadic shifts. This actually also reproves the boundedness
of operator-valued dyadic shifts from H\"anninen--Hyt\"onen \cite{HH} with a bit better dependence on the pair of indices $(j_1,j_2)$. The proof is similar, though.
However, Lemma \ref{lem:ShiftWrongOrder} offers a more interesting variant, which will also be of key importance to us.

Let us first introduce some definitions related to the so called decoupling estimate.
Let $V \in \calD^m$. 
Denote by $Y_{V}$ the measure space $(V, \text{Leb}_m(V), \nu_{V})$. Here $\text{Leb}_m(V)$ is the collection of 
Lebesgue measurable subset of $V$ and $ \nu_V=\ud x \lfloor V/|V|$, where $\ud x \lfloor V$ is the $m$-dimensional Lebesgue measure restricted to $V$.
With these we define the product probability space
$$
(Y^m, \scrA_m, \nu_m):= \prod_{V \in \calD^m} Y_{V}.
$$
If $y \in Y^m$ and $V \in \calD^m$, we denote by $y_V$ the coordinate related to $Y_V$. 

Suppose $i \in \{0,1,\dots\}$ and $j \in \{0, \dots, i\}$. 
Let $\calD^m_{i,j}$ be the collection
\begin{equation}\label{eq:SubLattice}
\calD^m_{i,j} := \{V \in \calD^m \colon \ell(V)=2^{k(i+1)+j} \text{ for some }k \in \Z\}.
\end{equation}
We have for all $p \in (1, \infty)$ and $f \in L^p(\R^m;E)$, where $E$ is a UMD space, that
\begin{equation}\label{eq:decoupling}
\int_{\R^m} \Big| \sum_{V \in \calD^m_{i,j}} \Delta^i_V f \Big|_E^p \ud x
\sim_p \E \int_{Y^m} \int_{\R^m} 
\Big| \sum_{V \in \calD^m_{i,j}}1_V(x) \Delta^i_V f(y_V) \Big|_E^p \ud x \ud \nu_m (y),
\end{equation}
where the implicit constant is independent of $i$ and $j$. This is a special case of Theorem 3.1 in \cite{HH}.
See also \cite{HH} for the details and the history of this estimate.
The reason why the collections $\calD^m_{i,j}$ are used is to guarantee
that $\Delta^i_V f$ is constant on $V'$ if $V, V' \in \calD^m_{i,j}$ and $V' \subsetneq V$. This is a technical detail needed to apply the underlying
abstract decoupling principle behind \eqref{eq:decoupling}.

\begin{lem}\label{lem:ShiftRBound}
Let $E$ be a UMD space with Pisier's property $(\alpha)$. Fix two parameters $j_1, j_2 \ge 0$.
Suppose $\{S^{j_1,j_2}_{\calD^m,k}\}_{k \in \calK}$ is a family of operator-valued dyadic shifts. For every $k \in \calK$ let $\{a_{V,k}\}_{V \in \calD^m}$
be the family of kernels related to the shift $S^{j_1, j_2}_{\calD^m,k}$.

Assume that there exists a constant $C_a$ so that
\begin{equation}\label{eq:ShiftRAs}
\calR(\{a_{V,k}(x, y) \in \calL(E )\colon k \in \calK, V \in \calD^m, x, y \in V\}) \le C_a.
\end{equation}
Then, for every $q \in (1, \infty)$, 
$$
\calR\big(\{ S^{j_1,j_2}_{\calD^m,k} \in \calL(L^q(\R^m ; E)) \colon k \in \calK\}) \lesssim (\min(j_1,j_2)+1)C_a.
$$
\end{lem}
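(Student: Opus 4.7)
The plan is to adapt the operator-valued shift boundedness argument of H\"anninen--Hyt\"onen \cite{HH}, refining it in two respects: producing an $\calR$-bound for the whole family rather than a single norm bound, and replacing the factor $\max(j_1,j_2)$ there by $\min(j_1,j_2)$ via a more careful choice of the decoupling sublattice. By passing to the adjoint shift if necessary --- it has parameters $(j_2,j_1)$ and kernels $a_{V,k}(y,x)^*$ obeying the same $\calR$-bound in $\calL(E^*)$ --- we may assume $j_1 \le j_2$, so that $\min(j_1,j_2)+1 = j_1+1$.

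Decompose $\calD^m = \bigsqcup_{j=0}^{j_1} \calD^m_{j_1,j}$ and write $S^{j_1,j_2}_{\calD^m,k} = \sum_{j=0}^{j_1} T_{j,k}$ for the corresponding sublattice pieces. By subadditivity of $\calR$-bounds it suffices to show $\calR(\{T_{j,k}\}_{k \in \calK}) \lesssim C_a$ uniformly in $j$. Fix such a $j$, indices $k_n$, and functions $f_n \in L^q(\R^m;E)$. Using UMD for the output martingale differences together with Pisier's property $(\alpha)$, we rewrite $\E_\epsilon \|\sum_n \epsilon_n T_{j,k_n} f_n\|_{L^q(\R^m;E)}^q$ as
\[\E_\epsilon \Big\|\sum_{V \in \calD^m_{j_1,j}} \Delta_V^{j_2}\Big[\sum_n \epsilon_{n,V} A_{V,k_n} \Delta_V^{j_1} f_n\Big]\Big\|_{L^q(\R^m;E)}^q.\]

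Next we use $A_{V,k} g(x) = \E_{y_V}[a_{V,k}(x,y_V) g(y_V)] 1_V(x)$ with $y_V$ uniform on $V$, pull the expectations $\E_y$ outside via Jensen, and apply $\calR$-boundedness of the kernels pointwise in $(x,y)$. Since $\Delta_V^{j_2}$ is a single-scale martingale difference, $\Delta_V^{j_2}[1_V a_{V,k_n}(\cdot,y_V)](x)$ is a difference of two subcube averages of $a_{V,k_n}(\cdot,y_V)$; the family of such differences is $\calR$-bounded in $\calL(E)$ with constant $\lesssim C_a$ by the convexity of $\calR$-bounds. This pointwise $\calR$-bound reduces the estimate to
\[\lesssim C_a^q \, \E_\epsilon \E_y \int_{\R^m} \Big\|\sum_V 1_V(x)\sum_n \epsilon_{n,V} \Delta_V^{j_1} f_n(y_V) \Big\|_E^q \ud x.\]
Finally, the decoupling identity \eqref{eq:decoupling} applied in reverse (thanks to the scale spacing $\ge 2^{j_1+1}$ in $\calD^m_{j_1,j}$), together with UMD and Pisier's $(\alpha)$ to re-absorb the auxiliary $\epsilon_{n,V}$'s, collapses the right-hand side to $C_a^q \E_\epsilon \|\sum_n \epsilon_n f_n\|_{L^q(\R^m;E)}^q$, as required.

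The main obstacle is the $\calR$-boundedness step. Naively replacing $a_{V,k_n}(x,y_V)$ by the identity would let $\Delta_V^{j_2}$ annihilate the argument (the result being constant on $V$). The correct move is to apply $\calR$-boundedness to the full $\calL(E)$-valued object $\Delta_V^{j_2}[1_V a_{V,k_n}(\cdot,y_V)](x)$, which remains $\calR$-bounded with constant $\lesssim C_a$ by convexity. In parallel, the various independent Rademacher families --- from the definition of $\calR$-boundedness, the output martingale randomization, and the decoupling --- must be combined and separated through repeated applications of Pisier's property $(\alpha)$.
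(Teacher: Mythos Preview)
Your argument is correct and follows the same global architecture as the paper (randomise via UMD and property~$(\alpha)$, introduce the decoupling variable $y$, apply the kernel $\calR$-bound pointwise, then undo via the decoupling estimate on the sparse sublattice $\calD^m_{j_1,j}$, with duality to obtain the $\min$). The one genuine technical deviation is in how you dispose of the output block $\Delta_V^{j_2}$. The paper first removes it by the UMD-valued Stein inequality,
\[
\E\E'\Big\|\sum_V \epsilon_V' \Delta_V^{j_2} g_V\Big\|_{L^q(\R^m;E)} \lesssim \E\E'\Big\|\sum_V \epsilon_V' g_V\Big\|_{L^q(\R^m;E)},
\]
and only then applies the $\calR$-bound directly to the raw kernels $a_{V,k}(x,y_V)$. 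You instead let $\Delta_V^{j_2}$ act on the operator-valued kernel and observe that, pointwise in $x$, the resulting operator $\Delta_V^{j_2}[1_V a_{V,k_n}(\cdot,y_V)](x)$ is a difference of two subcube averages of $a_{V,k_n}(\cdot,y_V)$, hence lies in $2\,\overline{\text{absconv}}\{a_{V,k}(z,w)\}$ and inherits the $\calR$-bound $\lesssim C_a$ by convex-hull stability of $\calR$-bounds. This is a legitimate and somewhat more elementary alternative: you trade Stein's inequality for the convexity of $\calR$-bounds. (A minor cosmetic point: $\Delta_V^{j_2}$ is a martingale \emph{block}, not a single-scale difference; your conclusion is nonetheless correct because for fixed $x$ only one $\Delta_J$ with $J^{(j_2)}=V$ contributes.) A second, purely organisational, difference is that you split into the sublattices $\calD^m_{j_1,j}$ at the outset, whereas the paper keeps the full grid until the final decoupling step; either order works.
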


\begin{proof}
Fix some $q \in (1, \infty)$. Let $\calI \subset \calK$ be a finite subset, and suppose that for every $k \in \calI$ we have a function $f_k \in L^q(\R^m ;E)$. 
The UMD property of $E$ implies that 
\begin{equation}\label{eq:ShiftRStep1}
\begin{split}
\E\Big \| \sum_{k \in \calI} \epsilon_k S^{j_1,j_2}_{\calD^m,k} f_k \Big \|_{L^q(\R^m;E)}
&=\E\Big \| \sum_{V \in \calD^m} \Delta^{j_2}_V  \sum_{k \in \calI} \epsilon_k A_{V,k} \Delta_V^{j_1} f_k \Big \|_{L^q(\R^m;E)} \\
&\sim \E \E'\Big \| \sum_{V \in \calD^m}\epsilon_V' \Delta^{j_2}_V  \sum_{k \in \calI} \epsilon_k A_{V,k} \Delta_V^{j_1} f_k \Big \|_{L^q(\R^m;E)} \\
& \lesssim \E \E'\Big \| \sum_{V \in \calD^m}\epsilon_V'   \sum_{k \in \calI} \epsilon_k A_{V,k} \Delta_V^{j_1} f_k \Big \|_{L^q(\R^m;E)} \\
&\sim \Big(\E \E' \Big \| \sum_{k \in \calI}\sum_{V \in \calD^m} \epsilon_k \epsilon_V' A_{V,k} \Delta_V^{j_1} f_k \Big \|_{L^q(\R^m;E)}^q \Big)^{1/q},
\end{split}
\end{equation}
where we applied Stein's inequality in the second to last step, and the Kahane-Khintchine inequality in the last. The UMD-valued version of Stein's inequality
is by Bourgain, for a proof see e.g. Theorem 4.2.23 in the book \cite{HNVW1}.

Let $V \in \calD^m$ and $k \in \calI$. Applying the probability space $Y^m$, which was introduced in the beginning of this section, we can write 
\begin{equation*}
\begin{split}
A_{V,k} \Delta_V^{j_1} f_k (x) 
&= \frac{1_V(x)}{|V|} \int_V a_{V,k}(x, y) \Delta_V^{j_1} f_k(y) \ud y \\
&= \int_{Y_V} 1_V(x) a_{V,k}(x, y_V) \Delta_V^{j_1} f_k(y_V) \ud \nu_V (y_V) \\
&=\int_{Y^m} 1_V(x) a_{V,k}(x, y_V) \Delta_V^{j_1} f_k(y_V) \ud \nu_m (y).
\end{split}
\end{equation*}
Applying this in the right hand side of \eqref{eq:ShiftRStep1}, and using H\"older's inequality related to the appearing $Y^m$
integral, we see that the RHS of \eqref{eq:ShiftRStep1} is bounded by
\begin{equation*}
\Big( \E \E' \int_{\R^m} \int_{Y^m} \Big |
\sum_{k \in \calI}\sum_{V \in \calD^m} \epsilon_k \epsilon_V' 1_V(x) a_{V,k}(x,y_V) \Delta_V^{j_1} f_k(y_V)
\Big |_E^q \ud \nu_m (y) \ud x \Big)^{1/q}.
\end{equation*}

Fix for the moment two points $x \in \R^m$ and $y \in Y^m$. 
The $\calR$-boundedness assumption \eqref{eq:ShiftRAs} together with
Pisier's property $(\alpha)$ show that
\begin{equation*}
\begin{split}
\E \E' \Big |
&\sum_{k \in \calI}\sum_{V \in \calD^m} \epsilon_k \epsilon_V' 1_V(x) a_{V,k}(x,y_V) \Delta_V^{j_1} f_k(y_V)
\Big |_E^q \\
& \lesssim \E''\Big |
\sum_{k \in \calI}\sum_{V \in \calD^m} \epsilon_{V,k}'' 1_V(x) a_{V,k}(x,y_V) \Delta_V^{j_1} f_k(y_V)
\Big |_E^q \\
& \lesssim C_a^q \E''\Big |
\sum_{k \in \calI}\sum_{V \in \calD^m} \epsilon_{V,k}''  1_V(x) \Delta_V^{j_1} f_k(y_V)
\Big |_E^q.
\end{split}
\end{equation*}

We have shown that
\begin{equation}\label{eq:DecLeft}
\begin{split}
\E\Big \| &\sum_{k \in \calI} \epsilon_k S^{j_1,j_2}_{\calD^m, k} f_k \Big \|_{L^q(\R^m;E)} \\
&\lesssim C_a \Big( \E''\int_{\R^m} \int_{Y^m} \Big |
\sum_{k \in \calI}\sum_{V \in \calD^m} \epsilon_{V,k}'' 1_V(x) \Delta_V^{j_1} f_k(y_V)
\Big |_E^q \ud \nu_m (y) \ud x \Big)^{1/q} \\
&\lesssim C_a\Big( \E \E'\int_{\R^m} \int_{Y^m} \Big |
\sum_{k \in \calI}\sum_{V \in \calD^m} \epsilon_k \epsilon_V' 1_V(x) \Delta_V^{j_1} f_k(y_V)
\Big |_E^q \ud \nu_m (y) \ud x\Big)^{1/q},
\end{split}
\end{equation}
where in the last step we again applied the property $(\alpha)$. Let $k \in \{0, \dots j_1\}$ and let
$\calD^m_{j_1,k} \subset \calD^m$ be the related collection as defined in \eqref{eq:SubLattice}.
The decoupling estimate \eqref{eq:decoupling} gives that
\begin{equation}\label{eq:Decouple}
\begin{split}
\E'\int_{\R^m} \int_{Y^m}
& \Big | \sum_{k \in \calI}\sum_{V \in \calD^m_{j_1,k}} \epsilon_k \epsilon_V' 1_V(x) \Delta_V^{j_1} f_k(y_V)
\Big |_E^q \ud \nu_m (y) \ud x \\
&=\E'\int_{\R^m} \int_{Y^m}
 \Big | \sum_{V \in \calD^m_{j_1,k}}  \epsilon'_V 1_V(x) \Delta^{j_1}_V\Big( \sum_{k \in \calI} \epsilon_k   f_k\Big)(y_V)
\Big |_E^q \ud \nu_m (y) \ud x \\
&\sim \int_{\R^m} 
 \Big |  \sum_{V \in \calD^m_{j_1,k}}   \Delta^{j_1}_V \Big(\sum_{k \in \calI}  \epsilon_k   f_k\Big)
\Big |_E^q \ud x 
\lesssim \int_{\R^m} \Big | \sum_{k \in \calI} \epsilon_k  f_k
 \Big |_E^q   \ud x.
\end{split}
\end{equation}
This combined with \eqref{eq:DecLeft} shows that
\begin{align*}
\E\Big \| \sum_{k \in \calI} \epsilon_k S^{j_1,j_2}_{\calD^m, k} f_k \Big \|_{L^q(\R^m;E)}
&\lesssim (j_1+1)C_a \Big(\E\Big \| \sum_{k \in \calI} \epsilon_k f_k \Big \|_{L^q(\R^m;E)}^q\Big)^{1/q} \\
&\sim (j_1+1)C_a \E\Big \| \sum_{k \in \calI} \epsilon_k f_k \Big \|_{L^q(\R^m;E)}.
\end{align*}

So far we have proved the claim with the constant $(j_1+1)$. 
The constant $(\min(j_1,j_2)+1)$ is achieved via duality.
Consider some shift $S^{j_1,j_2}_{\calD^m,k}$. Its adjoint $(S^{j_1,j_2}_{\calD^m,k})^*$ is the operator acting on functions $g \in L^{q'}(\R^m; E^*)$ by
$$
(S^{j_1,j_2}_{\calD^m,k})^*g
= \sum_{V \in \calD^m} \Delta^{j_1}_V A^*_{V,k} \Delta^{j_2}_V g,
$$
where each $A_{V,k}^*$ is an integral operator 
$$
A^*_{V,k} \varphi(y)= \frac{1_V(y)}{|V|} \int_V a_{V,k}(x,y)^* \varphi(x) \ud x, \quad \varphi \in L^1_{\text{loc}}(\R^m;E^*).
$$

Since $E$ is a UMD space, we know that if $\calT \subset \calL(E)$ is an $\calR$-bounded operator family, then
the family $\calT^*:= \{T^* \in \calL(E^*) \colon T \in \calT\}$ is also $\calR$-bounded and 
$\calR(\calT^*) \lesssim \calR(\calT)$. This can be seen using Lemma \ref{lem:randomduality}.
Thus, 
\begin{equation}\label{eq:AdjKern}
\begin{split}
\calR(\{a_{V,k}(x, y)^* \in \calL(E^* )\colon k \in \calK, V \in \calD^m, x, y \in V\}) 
\lesssim C_a.
\end{split}
\end{equation}
Also, because $E$ is a UMD space, Pisier's property $(\alpha)$ of $E$ implies that also $E^*$ has the property $(\alpha)$ with comparable constants.
See Proposition 7.5.15 in the book \cite{HNVW2}.

Hence, we see that  $\{(S^{j_1,j_2}_{\calD^m,k})^*\}_{k \in \calK}$ is a family of dyadic shifts with parameters $(j_2,j_1)$,
and the related family of kernels satisfies the $\calR$-boundedness condition \eqref{eq:AdjKern}.
The above proof shows that 
$$
\calR\big(\{ (S^{j_1,j_2}_{\calD^m,k})^* \in \calL(L^{q'}(\R^m ; E^*)) \colon k \in \calK\}) \lesssim (j_2+1)C_a.
$$
Using Lemma \ref{lem:randomduality} again we have
$$
\calR\big(\{ S^{j_1,j_2}_{\calD^m,k} \in \calL(L^{q}(\R^m ; E)) \colon k \in \calK\}) \lesssim (j_2+1)C_a.
$$
This concludes the proof.
\end{proof}

Next, we investigate shifts $S^{i_1,i_2}_{\calD^n}$ related to  families of kernels 
$a_K \colon \R^n \times \R^n \to L^p(\R^m;E)$, where $E$ is a UMD space with the property $(\alpha)$ of Pisier and $p \in (1,\infty)$ is fixed.
This time we are interested in estimates of the form
$$
 \| S^{i_1,i_2}_{\calD^n} f \|_{L^p(\R^m; L^q(\R^n;E))}
 \lesssim \| f \|_{L^p(\R^m; L^q(\R^n;E))}
 $$
for a given $q \in (1,\infty)$.
Notice that if we had the norm of $L^q(\R^n; L^p(\R^m;E))$ instead, we could apply Lemma \ref{lem:ShiftRBound}.

Let $Y^n$ be the probability space related to decoupling in $\R^n$, and suppose $T \in \calL (L^p(\R^m;E))$.
If $f \colon \R^n \times \R^m \times Y^n \to E$ is a finite sum
\begin{equation}\label{eq:Simplef}
f(x_1,x_2,y)= \sum_i 1_{A_i}(x_2) 1_{B_i}(x_1,y)e_i,
\end{equation}
where $A_i \subset \R^m$, $B_i \subset \R^n \times Y^n$ are sets of finite measure and $e_i \in E$,
then we define
$$
Tf(x_1,x_2,y):= \sum_i T(1_{A_i}e_i)(x_2) 1_{B_i}(x_1,y)e_i.
$$
The function $Tf$ is well defined i.e. independent of the representation of $f$. We say that
$T$ can be extended to an operator in $\calL(L^p(\R^m;L^q(\R^n\times Y^n;E)))$
if there exists $\tilde{T} \in \calL(L^p(\R^m;L^q(\R^n\times Y^n;E)))$ so that
$\tilde{T}f=Tf$ for all $f$ of the form \eqref{eq:Simplef}. This extension, if it exists, is unique
since functions as in \eqref{eq:Simplef} are dense in $L^p(\R^m;L^q(\R^n\times Y^n;E))$.

\begin{lem}\label{lem:ShiftWrongOrder}
Suppose $E$ is a UMD space with Pisier's property $(\alpha)$. 
Let $p,q \in (1, \infty)$ and $i_1, i_2 \in \{0,1, \dots\}$ be fixed.
Assume that $\{a_{K,k} \}_{K \in \calD^n, k \in \calK}$ is a family of kernels 
$$
a_{K,k} \colon \R^n \times \R^n \to \calL (L^p(\R^m;E)),
$$
so that each $a_{K,k}(x_1,y_1) \in \calL (L^p(\R^m;E))$ can be extended to an operator in
$$
\calL (L^p(\R^m;L^q(\R^n\times Y^n;E))).
$$ 
In addition, it is assumed that the kernels are of the form
\begin{equation}\label{eq:KernelSimple}
a_{K,k}(x_1,y_1)=\sum_{l \in \calJ_{K,k}} a_{K,k, l}1_{S_{K,k,l}}(x_1, y_1),
\end{equation}
where $(S_{K,k,l})_{l \in \calJ_{K,k}}$ is a finite partition of $K \times K$ and $a_{K, k, l} \in \calL(L^p(\R^m;E))$
Suppose that
there exists a constant $C_a$ so that 
$$
\calR(\{ a_{K,k}(x_1,y_1) \in \calL (L^p(\R^m;L^q(\R^n\times Y^n;E))) \colon k \in \calK,K \in \calD^n, x_1,y_1 \in K\}) \le C_a.
$$

For every $k \in \calK$, let  $S^{i_1,i_2}_{\calD^n,k}$ be the operator-valued dyadic shift related to the family 
$\{a_{K,k}\}_{K \in \calD^n}$. Then, every $S^{i_1,i_2}_{\calD^n,k}$ can be extended to an operator in $\calL(L^p(\R^m; L^q(\R^n;E)))$, and
$$
\calR(\{ S^{i_1,i_2}_{\calD^n,k} \in \calL (L^p(\R^m;L^q(\R^n;E))) \colon k \in \calK\}) 
\lesssim C_a (\min(i_1, i_2)+1).
$$
\end{lem}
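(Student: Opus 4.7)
The plan is to follow the blueprint of Lemma \ref{lem:ShiftRBound}, carefully tracking the mixed-norm structure $F := L^p(\R^m; L^q(\R^n;E))$. The meta-observation is that every one-parameter tool (UMD, Stein, decoupling) is applied \emph{pointwise in the outer variable} $x_2 \in \R^m$, where each slice takes values in the UMD space $L^q(\R^n;E)$, and then integrated in $x_2$; Kahane--Khintchine lets us freely pass between Rademacher-sum exponents.

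Fix a finite set $\calI \subset \calK$ and test functions $f_k \in F$. First, by pointwise UMD in $x_2$ I would insert independent random signs $\epsilon_K'$ in front of the outer martingale block $\Delta^{i_2}_K$, and then use Stein's inequality (again pointwise in $x_2$) to remove $\Delta^{i_2}_K$, reducing the problem to bounding $(\E\E'\|\sum_{K,k} \epsilon_K' \epsilon_k A_{K,k}\Delta^{i_1}_K f_k\|_F^p)^{1/p}$. Next, using the probability space $Y^n$, write
\[
A_{K,k}\Delta^{i_1}_K f_k(x_1,x_2) = \int_{Y^n} 1_K(x_1)[a_{K,k}(x_1,y_K)\Delta^{i_1}_K f_k(y_K,\cdot)](x_2)\ud\nu_n(y),
\]
with $a_{K,k}(x_1,y_K)$ acting in the $x_2$ variable. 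Jensen's inequality in $Y^n$ combined with Minkowski on $L^p(\R^m)$ gives the key estimate $\|\int_{Y^n} G(\cdot,y)\ud\nu_n(y)\|_F \le \|G\|_{\tilde F}$, where $\tilde F := L^p(\R^m; L^q(\R^n\times Y^n;E))$. This is precisely what moves the analysis into the extended space appearing in the hypothesis.

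The main obstacle is peeling off the operators $a_{K,k}(x_1,y_K)$. Here the simple form \eqref{eq:KernelSimple} is essential, as it lets us write $1_K(x_1)a_{K,k}(x_1,y_K) = \sum_{l \in \calJ_{K,k}} 1_{S_{K,k,l}}(x_1,y_K)\,a_{K,k,l}$, so the integrand becomes a sum over $(K,k,l)$ of scalar indicators (in $(x_1,y)$) times fixed operators applied to $\Delta^{i_1}_K f_k(y_K,\cdot)$. Invoking Pisier's property $(\alpha)$ to introduce a third sign family $\epsilon''_{K,k,l}$, and then the assumed $\calR$-boundedness in $\calL(\tilde F)$ (which passes to the subfamily $\{a_{K,k,l}\}$), I would replace the operators by the identity at the cost of $C_a$; a second application of $(\alpha)$ reabsorbs the auxiliary signs. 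The difficulty here lies in the fact that $a_{K,k}(x_1,y_K)$ genuinely varies with $(x_1,y)$, and $\calR$-boundedness is only directly applicable after the finite-partition structure has been exploited to reduce to a collection of fixed operators.

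To finish, I would split $\calD^n$ into the $i_1+1$ subcollections $\calD^n_{i_1,l}$ and apply the decoupling estimate \eqref{eq:decoupling} pointwise in $x_2$ in the reverse direction; this reconstructs $\sum_{K \in \calD^n_{i_1,l}} \Delta^{i_1}_K f_k$ from its decoupled form, producing the factor $(i_1+1)$ after summing over $l$. A final UMD step removes the surviving $\epsilon_K'$ signs, yielding the $\calR$-bound with constant $\lesssim (i_1+1)C_a$. To upgrade to $(\min(i_1,i_2)+1)C_a$ I argue by duality: the adjoint $(S^{i_1,i_2}_{\calD^n,k})^*$ is a shift with parameters $(i_2,i_1)$ whose adjoint kernels also satisfy the extension and $\calR$-boundedness hypotheses (since $E^*$ is UMD with Pisier's $(\alpha)$ by Proposition 7.5.15 of \cite{HNVW2}) and acts on $L^{p'}(\R^m;L^{q'}(\R^n;E^*))$. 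Applying the argument above to the adjoints gives a bound $(i_2+1)C_a$, which transfers back to the original shifts via Lemma \ref{lem:randomduality}, exactly as in the proof of Lemma \ref{lem:ShiftRBound}.
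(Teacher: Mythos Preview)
Your outline matches the paper's argument: UMD and Stein pointwise in $x_2$, rewrite via $Y^n$, move into $\tilde F = L^p(\R^m;L^q(\R^n\times Y^n;E))$, exploit the partition structure \eqref{eq:KernelSimple} to apply the $\calR$-boundedness hypothesis in $\calL(\tilde F)$, then decoupling over the sublattices $\calD^n_{i_1,l}$ and duality for the minimum.

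There is one genuine imprecision to correct. The passage from signs $\epsilon'_K\epsilon_k$ to triple-indexed signs $\epsilon''_{K,k,l}$ (and back) is \emph{not} an instance of property $(\alpha)$: $(\alpha)$ only relates product signs $\epsilon_i\epsilon'_j$ to joint signs $\epsilon''_{ij}$ and cannot manufacture an extra $l$-index. What the paper does, and what you need, is a pointwise identical-distribution argument. First use Kahane--Khintchine to move the Rademacher expectation inside the $\tilde F$-norm, obtaining a pointwise quantity $\E\bigl|\sum_{K,k}\epsilon_{K,k}\sum_l 1_{S_{K,k,l}}(x_1,y_K)\,a_{K,k,l}\Delta^{i_1}_K f_k(y_K)(x_2)\bigr|_E^q$ at fixed $(x_1,y,x_2)$. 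Since $(S_{K,k,l})_l$ partitions $K\times K$, for each $(K,k)$ with $x_1\in K$ exactly one $l$ contributes, so the sign families $\{\epsilon_{K,k}\colon x_1\in K\}$ and $\{\epsilon_{K,k,l}\colon (x_1,y_K)\in S_{K,k,l}\}$ are identically distributed; this is what legitimately replaces $\epsilon_{K,k}$ by $\epsilon_{K,k,l}$ and later reverses the substitution. Property $(\alpha)$ enters separately, to pass between $\epsilon'_K\epsilon_k$ and a single joint family $\epsilon_{K,k}$. With this fix your argument is the paper's.
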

\begin{rem}\label{rem:stronger}
The assumptions are \emph{stronger} than in Lemma \ref{lem:ShiftRBound} in the sense that they imply that
$$
\calR(\{a_{K,k}(x_1, y_1) \in \calL(L^p(\R^m; E))\colon k \in \calK, K \in \calD^n, x_1, y_1 \in K\}) \le C_a.
$$
Therefore, we have by Lemma \ref{lem:ShiftRBound} that for all $s \in (1,\infty)$ there holds
$$
\calR(\{ S^{i_1,i_2}_{\calD^n,k} \in \calL (L^s(\R^n;L^p(\R^m;E))) \colon k \in \calK\}) 
\lesssim C_a (\min(i_1, i_2)+1).
$$

The assumption \eqref{eq:KernelSimple}  is satisfied in all the applications of this lemma below.
\end{rem}

\begin{proof}[Proof of Lemma \ref{lem:ShiftWrongOrder}]
Let $\{f_k\}_{k \in \calI}$, where $\calI \subset \calK$ is finite, be a sequence of functions $f_k \colon \R^n \times \R^m \to E$ of the form
\begin{equation}\label{eq:simple}
f_k(x_1,x_2)= \sum_i 1_{A_{k,i}}(x_1)1_{B_{k,i}}(x_2)e_{k,i},
\end{equation}
where the sum is finite, $A_{k,i} \subset \R^n$ and $B_{k,i} \subset \R^m$ are sets of finite measure, and $e_{k,i} \in E$. 
By the Remark \ref{rem:stronger}, $S^{i_1,i_2}_{\calD^n,k} f_k$ is well defined for every $k$. We will show that
$$
\E \Big\| \sum_{k \in \calI}  \varepsilon_k S^{i_1,i_2}_{\calD^n,k} f_k \Big \|_{L^p(\R^m; L^q(\R^n;E))} 
\lesssim C_a (i_1+1)
\E \big\| \sum_{k \in \calI}  \varepsilon_k f_k \big \|_{L^p(\R^m; L^q(\R^n;E))},
$$
which proves Lemma \ref{lem:ShiftWrongOrder} (the minimum can be attained using duality as before).

Below we view the functions $f_k$ as  functions in $L^q(\R^n;L^p(\R^m;E))$, so that the martingale differences 
$\Delta^{i_1}_Kf_k$ have the usual meaning as  $L^p(\R^m;E)$-valued functions.
Begin by estimating (operate in $L^q(\R^n;E)$ with a fixed $x_2 \in \R^m$ to introduce random signs and to get rid
of the martingales, and use Kahane--Khintchine):
\begin{equation}\label{eq:ShiftWrong1}
\begin{split}
\E \Big\| \sum_{k \in \calI} & \epsilon_k S^{i_1,i_2}_{\calD^n,k} f_k \Big\|_{L^p(\R^m; L^q(\R^n;E))} \\
&=\E \Big\| \sum_{K \in \calD^n} \Delta^{i_2}_{K} \sum_{k \in \calI} \epsilon_k A_{K,k} \Delta^{i_1}_{K} f_k \Big\|_{L^p(\R^m; L^q(\R^n;E))} \\
&\lesssim \E \E' \Big\| \sum_{K \in \calD^n}  \sum_{k \in \calI} \epsilon'_K\epsilon_k A_{K,k} \Delta^{i_1}_{K} f_k \Big\|_{L^p(\R^m; L^q(\R^n;E))} \\
& \sim \E \Big\| \sum_{K \in \calD^n}  \sum_{k \in \calI} \epsilon_{K,k} A_{K,k} \Delta^{i_1}_{K} f_k \Big\|_{L^p(\R^m; L^q(\R^n;E))}.
\end{split}
\end{equation}
The last step applied the property $(\alpha)$ of $L^p(\R^m; L^q(\R^n;E))$.

As in Lemma \ref{lem:ShiftRBound}, we write
\begin{equation*}
\begin{split}
A_{K,k}\Delta^{i_1}_{K}f_k(x_1)&= \frac{1_K(x_1)}{|K|} \int_K a_K(x_1,y_1) \Delta^{i_1}_{K}f_k(y_1) \ud y_1 \\
& = \int_{Y^n}1_K(x_1)  a_{K,k}(x_1,y_K) \Delta^{i_1}_{K}f_k(y_K) \ud \nu_n (y).
\end{split}
\end{equation*}
The interpretation here is that $\Delta^{i_1}_{K}f_k(y_K) \in L^p(\R^m;E)$, to which
$a_{K,k}(x_1,y_K) \in \calL(L^p(\R^m;E))$ hits giving $a_{K,k}(x_1,y_K) \Delta^{i_1}_{K}f_k(y_K) \in L^p(\R^m;E)$.
This can further be evaluated at $x_2 \in \R^m$ to get an element of $E$. This will simply be written
as $a_{K,k}(x_1,y_K) \Delta^{i_1}_{K}f_k(y_K)(x_2) \in E$.

We can assume that the kernels $A_{K,k}$ are supported in $K \times K$, 
and so we can stop writing the indicator $1_K(x_1)$.
Thus,  the right hand side of \eqref{eq:ShiftWrong1} is dominated by
\begin{equation}\label{eq:WrongY}
 \E   \Big\|  \sum_{K \in \calD^n}  \sum_{k \in \calI} \epsilon_{K,k} 
a_{K,k}(x_1,y_K) \Delta^{i_1}_{K}f_k(y_K) (x_2)\Big\|_{L^p(\ud x_2; L^q (\ud x_1 \times \nu_n(y);E))}.
\end{equation}

To proceed, we aim to apply the fact that the kernels $a_{K,k}$ are of the form \eqref{eq:KernelSimple}.
Kahane--Khintchine inequality implies that \eqref{eq:WrongY} is equivalent with
$$
\Big \| \Big( \int_{\R^n} \int_{Y^n} \E \Big| \sum_{K \in \calD^n}  \sum_{k \in \calI} \epsilon_{K,k} 
a_{K,k}(x_1,y_K)\Delta^{i_1}_{K}f_k(y_K) (x_2)\Big|^q_E \ud \nu_n (y) \ud x_1 \Big)^{1/q} \Big \|_{L^p(\ud x_2)}.
$$
Fix $x_1 \in \R^n$ and $y \in Y^n$. Let 
$$
\{\epsilon_{K, k, l}\colon K \in \calD^n, k \in \calK, l \in \calJ_{K,k}\}
$$  
be another independent sequence of random signs. By the identical distribution of
$$
\{\epsilon_{K,k}\colon K \in \calD^n, k \in \calK, x_1 \in K\}
$$
and
$$
\{\epsilon_{K,k,l}\colon K \in \calD^n, k \in \calK, l \in \calJ_{K,k} \textup{ s.t. } (x_1, y_K) \in S_{K,k,l}\}
$$
we have
\begin{equation}\label{eq:IDTrick}
\begin{split}
\E &\Big| \sum_{K \in \calD^n}  \sum_{k \in \calI} \epsilon_{K,k} 
a_{K,k}(x_1,y_K) \Delta^{i_1}_{K}f(y_K) (x_2)\Big|^q_E \\
& =\E \Big| \sum_{k \in \calI} \sum_{K \in \calD^n} \epsilon_{K,k}  \sum_{l \in \calJ_{K,k}} 
1_{S_{K,k,l}}(x_1, y_K)  a_{K,k, l}
 \Delta^{i_1}_{K}f_k(y_K)(x_2)\Big|^q_E  \\
 &=\E \Big| \sum_{k \in \calI} \sum_{K \in \calD^n}\sum_{l \in \calJ_{K,k}}  \epsilon_{K,k,l}  
1_{S_{K,k,l}}(x_1, y_K)  a_{K,k, l}
 \Delta^{i_1}_{K}f_k(y_K)(x_2)\Big|^q_E.
\end{split}
\end{equation}

Using \eqref{eq:IDTrick} and applying the Kahane--Khintchine inequality again we have shown that \eqref{eq:WrongY}
is equivalent with
\begin{equation}\label{eq:ReadyForR}
 \E   \Big \|    \sum_{k \in \calI} \sum_{K \in \calD^n}\sum_{l \in \calJ_{K,k}}  \epsilon_{K,k,l}  
1_{S_{K,k,l}}(x_1, y_K)  a_{K,k, l}
 \Delta^{i_1}_{K}f_k(y_K)(x_2) \Big \|,
\end{equation}
where the norm $ \| \cdot\|$ refers to $\| \cdot \|_{L^p(\ud x_2; L^q (\ud x_1 \times \nu_n(y);E))}$.
Define for every $(K, k, l)$ the function
$$
1_{S_{K,k,l}}(x_1, y_K)  \Delta^{i_1}_{K}f_k(y_K) (x_2)=: F_{K,k,l}(x_1,x_2,y).
$$
Using the fact that $f$ is of the form \eqref{eq:simple} we see that
$$
1_{S_{K,k,l}}(x_1, y_K)  a_{K,k,l}
\Delta^{i_1}_{K}f_k(y_K)(x_2)
=a_{K,k,l} F_{K,k,l}(x_1,x_2,y),
$$
where in the right hand side we interpreted $a_{K,k,l}$ as the extended operator in $\calL (L^p(\R^m;L^q(\R^n\times Y^n;E)))$.
Now, the assumed $\calR$-boundedness gives that
\begin{equation*}
\begin{split}
\eqref{eq:ReadyForR}
&=\E   \Big \|   \sum_{k \in \calI} \sum_{K \in \calD^n}\sum_{l \in \calJ_{K,k}} 
\epsilon_{K,k,l}  a_{K,k,l} F_{K,k,l}(x_1,x_2,y) \Big \| \\
& \lesssim C_a \E   \Big \|   \sum_{k \in \calI} \sum_{K \in \calD^n}\sum_{l \in \calJ_{K,k}} 
\epsilon_{K,k,l} 1_{S_{K,k,l}}(x_1, y_K)  \Delta^{i_1}_{K}f_k(y_K)(x_2) \Big \| \\
& \sim C_a \E   \Big \|    \sum_{k \in \calI}  \sum_{K \in \calD^n}
\epsilon_{K,k}  1_{K}(x_1) 
\Delta^{i_1}_{K}f_k(y_K)(x_2) \Big \|,
\end{split}
\end{equation*} 
where in the last step we converted back to the random signs $\epsilon_{K,k}$ as in \eqref{eq:IDTrick}.

Finally, using the property $(\alpha)$ of $L^p(\ud x_2; L^q (\ud x_1 \times \nu_n(y);E))$ and then decoupling similarly
as in \eqref{eq:Decouple} it is seen that
\begin{equation*}
\begin{split}
\E   \Big \|    \sum_{k \in \calI}  &\sum_{K \in \calD^n}
\epsilon_{K,k}  1_{K}(x_1) 
\Delta^{i_1}_{K}f_k(y_K)( x_2) \Big \|_{L^p(\ud x_2; L^q (\ud x_1 \times \nu_n(y);E))} \\
&\sim \E \E'  \Big \| \sum_{K \in \calD^n} \epsilon_K' 1_K(x_1)\Delta^{i_1}_{K} 
\Big( \sum_{k \in \calI} \epsilon_k 
f_k\Big)(y_K)( x_2) \Big \|_{L^p(\ud x_2; L^q (\ud x_1 \times \nu_n(y);E))} \\
&\lesssim (i_1+1) \E \Big\| \sum_{k \in \calI}  \epsilon_k f_k \Big \|_{L^p(\R^m; L^q(\R^n;E))}
\end{split}
\end{equation*}
This concludes the proof.

\end{proof}

\subsection{Bi-parameter operator-valued shifts}
We turn to show that operator-valued bi-parameter shifts are bounded (even $\calR$-bounded as a family).
\begin{prop}\label{prop:biparOP}
Let $E$ be a UMD space satisfying the property $(\alpha)$ of Pisier, and let $i_1, i_2, j_1, j_2 \ge 0$ be fixed parameters.
Suppose $\{S^{i_1, i_2, j_1,j_2}_{\calD^n, \calD^m,k}\}_{k \in \calK}$ is a family of operator-valued bi-parameter dyadic shifts
as in Section \ref{ss:opbipardef}. For every $k \in \calK$ let
$$
\{a_{K, V,k} \colon K \in \calD^n, V \in \calD^m\}
$$
be the family of kernels related to the shift $S^{i_1, i_2, j_1, j_2}_{\calD^n, \calD^m,k}$.
Assume that there exists a constant $C_a$ so that
\begin{equation*}
\calR\big(\{ a_{K,V,k}(x,y) \colon k \in \calK, K \in \calD^n, V \in \calD^m, x,y \in K\times V \}\big) \le C_a.
\end{equation*}
Then for all $p, q \in (1,\infty)$ we have
\begin{align*}
\calR\big(\{ S^{i_1,i_2,j_1,j_2}_{\calD^n, \calD^m, k} \in \calL(L^q(\R^n&; L^p(\R^m;E))) \colon k \in \calK\}) \\
&\lesssim (\min(i_1, i_2)+1)(\min(j_1, j_2)+1)C_a. 
\end{align*}
\end{prop}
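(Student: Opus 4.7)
The plan is to present $S^{i_1,i_2,j_1,j_2}_{\calD^n, \calD^m, k}$ as a one-parameter operator-valued dyadic shift in $\R^n$ whose operator-valued kernels are themselves one-parameter operator-valued dyadic shifts in $\R^m$, and then to iterate the one-parameter Lemma \ref{lem:ShiftRBound}. Set $F := L^p(\R^m; E)$; since $E$ is UMD with Pisier's property $(\alpha)$, so is $F$. I would identify a function in $L^q(\R^n; L^p(\R^m; E))$ with an $F$-valued function on $\R^n$.

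Using $\Delta_{K \times V}^{i_\ell, j_\ell} = \Delta_K^{i_\ell, 1} \Delta_V^{j_\ell, 2}$ and the commutativity of $\Delta_K^{i_\ell, 1}$ with $\Delta_V^{j_\ell, 2}$ for $\ell = 1, 2$, the first step is to rewrite
$$S^{i_1, i_2, j_1, j_2}_{\calD^n, \calD^m, k} f = \sum_{K \in \calD^n} \Delta_K^{i_2, 1} \Big( \sum_{V \in \calD^m} \Delta_V^{j_2, 2} A_{K, V, k} \Delta_V^{j_1, 2} \Big) \Delta_K^{i_1, 1} f.$$
For fixed $K \in \calD^n$, $k \in \calK$ and $x_1, y_1 \in K$, I would then introduce the integral operator $B_{K, V, k}^{x_1, y_1}$ on $L^p(\R^m; E)$ with $\calL(E)$-valued kernel $b_{K, V, k}^{x_1, y_1}(x_2, y_2) := a_{K, V, k}((x_1, x_2),(y_1, y_2))$, and set
$$T_{K, k}^{x_1, y_1} := \sum_{V \in \calD^m} \Delta_V^{j_2} B_{K, V, k}^{x_1, y_1} \Delta_V^{j_1},$$
which is a one-parameter operator-valued shift in $\R^m$ of type $(j_1, j_2)$ with underlying UMD space $E$. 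A direct computation using the above factorisation shows that
$$\Big( \sum_{V \in \calD^m} \Delta_V^{j_2, 2} A_{K, V, k} \Delta_V^{j_1, 2} g \Big)(x_1) = \frac{1_K(x_1)}{|K|} \int_K T_{K, k}^{x_1, y_1} g(y_1)\ud y_1$$
as an identity in $F$, so that $S^{i_1, i_2, j_1, j_2}_{\calD^n, \calD^m, k}$ is exactly a one-parameter $\calL(F)$-valued dyadic shift in $\R^n$ of type $(i_1, i_2)$ with operator-valued kernel $\tilde{a}_{K, k}(x_1, y_1) := T_{K, k}^{x_1, y_1}$.

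With this structural identification in hand, the rest is two successive applications of Lemma \ref{lem:ShiftRBound}. The hypothesis of the proposition directly yields $\calR$-boundedness of $\{b_{K, V, k}^{x_1, y_1}(x_2, y_2)\}$ in $\calL(E)$ by $C_a$ uniformly in all parameters, so Lemma \ref{lem:ShiftRBound} applied in $\R^m$ with underlying UMD space $E$ gives
$$\calR\big(\{ \tilde{a}_{K, k}(x_1, y_1) \colon k \in \calK, K \in \calD^n, x_1, y_1 \in K \}\big) \lesssim (\min(j_1, j_2) + 1) C_a,$$
i.e., the outer $\calL(F)$-valued kernels are $\calR$-bounded by the right-hand side. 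A second application of Lemma \ref{lem:ShiftRBound}, now in $\R^n$ with underlying UMD space $F = L^p(\R^m; E)$, then produces
$$\calR\big(\{ S^{i_1, i_2, j_1, j_2}_{\calD^n, \calD^m, k} \in \calL(L^q(\R^n; L^p(\R^m;E))) \colon k \in \calK\}\big) \lesssim (\min(i_1, i_2) + 1)(\min(j_1, j_2) + 1) C_a,$$
which is the claim. The delicate step is the first one, namely the clean identification of the bi-parameter shift as a shift-valued one-parameter shift; once this is set up, the estimate follows by a black-box iteration of the one-parameter result through the UMD tower $E \subset F = L^p(\R^m; E)$.
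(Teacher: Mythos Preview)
Your proof is correct and follows essentially the same approach as the paper: view the bi-parameter shift as a one-parameter shift in $\R^n$ whose $\calL(L^p(\R^m;E))$-valued kernels are themselves one-parameter shifts in $\R^m$, then apply Lemma~\ref{lem:ShiftRBound} twice. The only cosmetic difference is that you verify the structural identity via the factorisation $\Delta_{K\times V}^{i,j}=\Delta_K^{i,1}\Delta_V^{j,2}$, whereas the paper checks it by expanding both sides in Haar functions; these amount to the same computation.
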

\begin{proof}
For each fixed $k \in \calK$, $K \in \calD^n$ and $x_1, y_1 \in K$ we define the one-parameter operator-valued dyadic shift in $\R^m$ by the formula
$$
S^{j_1, j_2}_{\calD^m, k, K, x_1, y_1} \varphi := \sum_{V \in \calD^m} \Delta_V^{j_2} A_{V, k}^{K, x_1, y_1} \Delta_V^{j_1} \varphi, \qquad \varphi \in L_{\loc}^1(\R^m;E),
$$
where
$$
A_{V, k}^{K, x_1, y_1} \varphi(x_2) := \frac{1_V(x_2)}{|V|} \int_V a_{K,V, k}(x_1, x_2, y_1, y_2) \varphi(y_2)\ud y_2.
$$
The assumptions and Lemma \ref{lem:ShiftRBound} show that for all $p \in (1,\infty)$ we have
\begin{equation}\label{eq:r1}
\begin{split}
\calR\big(\{ S^{j_1,j_2}_{\calD^m, k, K, x_1, y_1} \in & \calL(L^p(\R^m ; E)) \colon  k \in \calK, K \in \calD^m, x_1, y_1 \in K\}) \\
& \lesssim (\min(j_1,j_2)+1)C_a.
\end{split}
\end{equation}

Next, fix $p \in (1,\infty)$, and for each $k \in \calK$ define the one-parameter operator-valued dyadic shift in $\R^n$ by the formula
$$
S^{i_1, i_2}_{\calD^n, k} \psi := \sum_{K \in \calD^n} \Delta_K^{i_2} A_{K, k} \Delta_K^{i_1} \psi, \qquad \psi \in L^1_{\loc}(\R^n; L^p(\R^m; E)),
$$
where
$$
A_{K,k} \psi(x_1) = \frac{1_K(x_1)}{|K|} \int_K a_{K,k}(x_1,y_1)\psi(y_1)\ud y_1
$$
and
$$
a_{K,k}(x_1, y_1) = S^{j_1, j_2}_{\calD^m, k, K, x_1, y_1}.
$$
Recall that $L^p(\R^m; E)$ is a UMD space with the property $(\alpha)$ of Pisier.
Using \eqref{eq:r1} and Lemma \ref{lem:ShiftRBound} we see that for all $q \in (1,\infty)$ there holds
\begin{align*}
\calR\big(\{ S^{i_1, i_2}_{\calD^n, k}\in & \calL(L^q(\R^n; L^p(\R^m; E))) \colon  k \in \calK\})
\lesssim (\min(i_1,i_2)+1)(\min(j_1,j_2)+1)C_a.
\end{align*}

To conclude the proof, we only need to check that
$$
S^{i_1, i_2}_{\calD^n, k} f = S^{i_1, i_2, j_1, j_2}_{\calD^n, \calD^m, k} f.
$$
For this identity we consider $k \in \calK$ fixed, and suppress it from the notation.
A straightforward, however tedious, way is to expand both sides using Haar functions. Clearly, $S^{i_1, i_2, j_1, j_2}_{\calD^n, \calD^m} f$ equals
\begin{align*}
\mathop{\sum_{K \in \calD^n}}_{V \in \calD^m} \frac{1}{|K||V|} \mathop{\sum_{I_1, I_2 \in \calD^n}}_{I_1^{(i_1)} = I_2^{(i_2) }=K}
\mathop{\sum_{J_1, J_2 \in \calD^m}}_{J_1^{(j_1)} = J_2^{(j_2) }=V} \Big( &\iint_{I_1 \times J_1} \iint_{I_2 \times J_2} 
(h_{I_1} \otimes h_{J_1})(y)(h_{I_2}\otimes h_{J_2})(z) \\
&\times a_{K,V}(z,y) \langle f, h_{I_1} \otimes h_{J_1} \rangle\ud z \ud y \Big) h_{I_2}\otimes h_{J_2}.
\end{align*}

We now check that also $S^{i_1, i_2}_{\calD^n} f$ equals this. Since
$$
\Delta_K^{i_1} f(y_1) = \mathop{\sum_{I_1\in \calD^n}}_{I_1^{(i_1)} =K} h_{I_1}(y_1) \langle f, h_{I_1} \rangle_1,
$$
we have
$$
A_K \Delta_K^{i_1} f(z_1)  
= \frac{1_K(z_1)}{|K|} \mathop{\sum_{I_1\in \calD^n}}_{I_1^{(i_1)} =K} \int_{I_1} h_{I_1}(y_1) S^{j_1, j_2}_{\calD^m, K, z_1, y_1} \langle f, h_{I_1} \rangle_1 \ud y_1
$$
and
\begin{align*}
\Delta_K^{i_2} A_{K} \Delta_K^{i_1} f(x_1) = \frac{1}{|K|} \mathop{\sum_{I_1, I_2 \in \calD^n}}_{I_1^{(i_1)} = I_2^{(i_2) }=K} h_{I_2}(x_1)
\Big( \int_{I_1} &\int_{I_2} h_{I_1}(y_1) h_{I_2}(z_1) \\
&\times S^{j_1, j_2}_{\calD^m, K, z_1, y_1} \langle f, h_{I_1} \rangle_1 \ud z_1  \ud y_1 \Big) .
\end{align*}
Since
$$
\Delta_V^{j_1} \langle f, h_{I_1} \rangle_1(y_2) = \mathop{\sum_{J_1 \in \calD^m}}_{J_1^{(j_1)} =V} \langle f, h_{I_1} \otimes h_{J_1} \rangle h_{J_1}(y_2),
$$
we have
\begin{align*}
A_V^{K, z_1, y_1}\Delta_V^{j_1} \langle f, h_{I_1} \rangle_1 (z_2) = \frac{1_V(z_2)}{|V|} \mathop{\sum_{J_1 \in \calD^m}}_{J_1^{(j_1)} =V} &
\int_{J_1} h_{J_1}(y_2) \\
&\times a_{K,V}(z_1, z_2, y_1, y_2) \langle f, h_{I_1} \otimes h_{J_1} \rangle\ud y_2
\end{align*}
and
\begin{align*}
\Delta_V^{j_2} A_{V}^{K, z_1, y_1} \Delta_V^{j_1} \langle f, h_{I_1} \rangle_1(x_2)
= \frac{1}{|V|}&\mathop{\sum_{J_1, J_2 \in \calD^m}}_{J_1^{(j_1)} = J_2^{(j_2) }=V} 
 \Big( \int_{J_1} \int_{J_2} h_{J_1}(y_2) h_{J_2}(z_2)  \\
&\times a_{K,V}(z_1, z_2, y_1, y_2) \langle f, h_{I_1} \otimes h_{J_1} \rangle\ud z_2 \ud y_2\Big)h_{J_2}(x_2).
\end{align*}
Combining, we readily see that $S^{i_1, i_2}_{\calD^n} f$ has the same Haar expansion as
$S^{i_1, i_2, j_1, j_2}_{\calD^n, \calD^m} f$, and the proof is complete.
\end{proof}

\section{Model operators}\label{sec:model}
Fix a UMD space $F$.
Suppose that for every $K, I_1, I_2 \in \calD^n$ we are given an operator $B_{K, I_1, I_2} \in \calL(F)$. Fix two indices $i_1, i_2 \ge 0$, and define the model operator
$$
P^{i_1, i_2}_{\calD^n} f(x) = \sum_{K \in \calD^n} \mathop{\sum_{I_1, I_2 \in \calD^n}}_{I_1^{(i_1)} = I_2^{(i_2)} = K} h_{I_2}(x) B_{K, I_1, I_2}( \langle f, h_{I_1} \rangle),
$$
where $x \in \R^n$ and $f \colon \R^n \to F$ is locally integrable. The next proposition considers
a family of these operators $P^{i_1, i_2}_{\calD^n,b}$, where $b \in \mathcal{B}$ and $\mathcal{B}$ is some index set.

\begin{prop}\label{prop:mod1-1}
Let $F$ be a UMD space with the property $(\alpha)$ of Pisier.
Suppose that
$$
\calR\Big(\Big\{ \frac{|K|}{|I_1|^{1/2} |I_2|^{1/2}} B_{K, I_1, I_2, b} \in \calL(F)\colon K, I_1, I_2 \in \calD^n, 
b \in \mathcal{B}\Big\} \Big) \le C_0.
$$
Let $P_{\calD^n, b}^{i_1,i_2}$ be a model operator associated with the operators $B_{K, I_1, I_2, b}$.
Then for all $q \in (1,\infty)$ we have
$$
\calR(\{P_{\calD^n, b}^{i_1,i_2} \in \calL(L^q(\R^n; F))\colon b \in \mathcal{B}\}) \lesssim (\min(i_1, i_2)+1)C_0.
$$
\end{prop}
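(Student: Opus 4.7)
The plan is to recognize that each model operator $P^{i_1,i_2}_{\calD^n,b}$ is already an operator-valued dyadic shift in disguise, and then apply Lemma \ref{lem:ShiftRBound} directly.

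First I would rewrite the $K$-summand of $P^{i_1,i_2}_{\calD^n,b}$ as $\Delta_K^{i_2} A_{K,b} \Delta_K^{i_1} f$ for an appropriately chosen averaging operator $A_{K,b}$. Concretely, I expect that the kernel
$$
a_{K,b}(x,y) := |K| \mathop{\sum_{I_1,I_2 \in \calD^n}}_{I_1^{(i_1)}=I_2^{(i_2)}=K} B_{K,I_1,I_2,b}\, h_{I_1}(y)\, h_{I_2}(x)
$$
does the job. Indeed, testing $A_{K,b}$ against $g = \Delta_K^{i_1} f$ and using $\langle \Delta_K^{i_1} f, h_{I_1}\rangle = \langle f, h_{I_1}\rangle$ for $I_1^{(i_1)} = K$, followed by applying $\Delta_K^{i_2}$ and using orthonormality of the Haar family, yields exactly the $K$-term of $P^{i_1,i_2}_{\calD^n,b}$. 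A short, essentially bookkeeping, verification.

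Next, I would verify the $\calR$-boundedness of the kernel family. The crucial observation is that for each fixed $x,y \in K$, the sum defining $a_{K,b}(x,y)$ collapses: only the pair $(I_1,I_2)$ with $y \in I_1$, $I_1^{(i_1)}=K$, and $x \in I_2$, $I_2^{(i_2)}=K$, contributes, because $h_{I_1}(y)h_{I_2}(x)$ vanishes otherwise. For this unique pair one has $h_{I_1}(y)h_{I_2}(x) = \pm |I_1|^{-1/2}|I_2|^{-1/2}$, so
$$
a_{K,b}(x,y) = \pm\, \frac{|K|}{|I_1|^{1/2}|I_2|^{1/2}}\, B_{K,I_1,I_2,b}.
$$
As $K,x,y,b$ vary, this set is contained, up to the signs $\pm 1$, in the family whose $\calR$-bound is assumed to be $\le C_0$. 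Since multiplying individual operators by signs can be absorbed into the random signs $\epsilon_i$ without changing the distribution, the $\calR$-bound survives, and the kernel family has $\calR$-bound $\lesssim C_0$.

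Finally, I would apply Lemma \ref{lem:ShiftRBound} to the family $\{P^{i_1,i_2}_{\calD^n,b}\}_{b \in \mathcal{B}}$, viewed as operator-valued shifts with parameters $(i_1,i_2)$ and kernels $\{a_{K,b}\}$. This directly gives
$$
\calR\bigl(\{P^{i_1,i_2}_{\calD^n,b} \in \calL(L^q(\R^n;F))\colon b \in \mathcal{B}\}\bigr) \lesssim (\min(i_1,i_2)+1)\,C_0,
$$
since $F$ is UMD with property $(\alpha)$, which is exactly the hypothesis required by that lemma. There is no real obstacle here; the only slightly delicate point is the collapsing computation for $a_{K,b}(x,y)$ and the handling of the $\pm$ sign, but both are routine once the shift structure is set up.
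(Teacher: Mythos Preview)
Your proposal is correct and follows essentially the same approach as the paper: define the operator-valued kernel $a_{K,b}$ exactly as you do, observe that the double sum collapses to a single term $\pm\,|K|\,|I_1|^{-1/2}|I_2|^{-1/2}B_{K,I_1,I_2,b}$ for each fixed $(x,y)\in K\times K$, absorb the sign into the $\calR$-bound hypothesis, and then invoke Lemma~\ref{lem:ShiftRBound}. The paper also notes explicitly that $S^{i_1,i_2}_{\calD^n,b}f = P^{i_1,i_2}_{\calD^n,b}f$, which is your ``bookkeeping verification''.
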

\begin{proof}
This is essentially just an estimate for operator-valued shifts in a form which is a priori slightly different.
To see the simple connection define the operator-valued kernels
$$
a_{K,b}^{i_1, i_2} \colon \R^n \times \R^n \to \calL(F)
$$
by setting
$$
a_{K,b}^{i_1, i_2}(x,y) = |K| \mathop{\sum_{I_1, I_2 \in \calD^n}}_{I_1^{(i_1)} = I_2^{(i_2)} = K} h_{I_1}(y) h_{I_2}(x) B_{K, I_1, I_2, b}.
$$
We then define the averaging operator $A_{K,b}^{i_1, i_2}$, mapping locally integrable functions $f \colon \R^n \to F$ to
$A_{K,b}^{i_1, i_2}f \colon \R^n \to F$, by the formula
$$
A_{K,b}^{i_1, i_2}f(x) = \frac{1}{|K|} \int_{\R^n} a_{K,b}^{i_1,i_2}(x, y) f(y) \ud y. 
$$
Define the operator-valued shift
$$
S_{\calD^n, b}^{i_1, i_2}f  := \sum_{K \in \calD^n} A_{K,b}^{i_1, i_2} f  = \sum_{K \in \calD^n} \Delta_K^{i_2} A_{K,b}^{i_1, i_2} \Delta_K^{i_1} f.
$$
By Lemma \ref{lem:ShiftRBound} we have for all $q \in (1,\infty)$ that
$$
\calR(\{S_{\calD^n, b}^{i_1,i_2} \in \calL(L^q(\R^n; F))\colon b \in \mathcal{B}\}) \lesssim (\min(i_1, i_2)+1)R,
$$
where
$$
R := \calR(\{a_{K,b}^{i_1, i_2}(x,y) \in \calL(F)\colon b \in \mathcal{B}, K \in \calD^n, x, y \in K\}).
$$
Consider a fixed tuple $(b, K, x, y)$ so that $a_{K,b}^{i_1, i_2}(x, y) \ne 0$. Then there are
unique $I_1, I_2 \in \calD^n$ (depending on $(K, x, y)$) so that $I_1^{(i_1)} = I_2^{(i_2)} = K$, $y \in I_1$ and $x \in I_2$.
Now, we have
$$
a_{K,b}^{i_1, i_2}(x, y) = |K| h_{I_1}(y) h_{I_2}(x) B_{K, I_1, I_2, b} = \pm \frac{|K|}{|I_1|^{1/2} |I_2|^{1/2}} B_{K, I_1, I_2,b}.
$$
Thus, we have $R \le C_0$. It remains only to notice that $S_{\calD^n, b}^{i_1, i_2}f = P_{\calD^n, b}^{i_1,i_2}f$, which follows from the fact that
$$
A_{K, b}^{i_1, i_2}f(x) = \mathop{\sum_{I_1, I_2 \in \calD^n}}_{I_1^{(i_1)} = I_2^{(i_2)} = K} h_{I_2}(x) B_{K, I_1, I_2, b} \Big( \int_{I_1} f(y)h_{I_1}(y)\ud y\Big).
$$
\end{proof}
Let us now consider the special case of model operators, where $F = L^p(\R^m; E)$ for some
fixed $p \in (1,\infty)$ and UMD space $E$. We formulate a condition for verifying the boundedness
of $P^{i_1,i_2}_{\calD^n}$ from $L^p(\R^m; L^q(\R^n; E))$ to $L^p(\R^m; L^q(\R^n; E))$. Notice that
the previous proposition only allows to conclude that under certain conditions
$P^{i_1,i_2}_{\calD^n}$ is bounded from $L^q(\R^n; L^p(\R^m; E)) \to L^q(\R^n; L^p(\R^m; E))$ for all $q \in (1,\infty)$.
The condition will now depend both on $p$ and $q$. These assumptions are stronger than above (i.e.
they also imply the conclusion of Proposition \ref{prop:mod1-1}), see Remark \ref{rem:stronger}.

When we talk about extensions, we always mean tensor extensions as in Section \ref{sec:biopshift}.
\begin{prop}\label{prop:mod1-2}
Let $E$ be a UMD space with the property $(\alpha)$ of Pisier, $p,q \in (1,\infty)$ and $F = L^p(\R^m; E)$.
Suppose we are given operators $B_{K, I_1, I_2, b} \in \calL(F)$ that can be extended
to operators in $\calL(L^p(\R^m; L^q(\R^n \times Y^n; E)))$, and that
\begin{align*}
\calR\Big(\Big\{ \frac{|K|}{|I_1|^{1/2} |I_2|^{1/2}} B_{K, I_1, I_2, b} \in \calL(L^p(\R^m; &L^q(\R^n \times Y^n; E)))\colon \\
& K, I_1, I_2 \in \calD^n, b \in \mathcal{B}\Big\} \Big) \le C_0.
\end{align*}
Let $P_{\calD^n, b}^{i_1,i_2}$ be a model operator associated with the operators $B_{K, I_1, I_2, b}$.
Then we have
$$
\calR(\{P_{\calD^n, b}^{i_1,i_2} \in \calL(L^p(\R^m; L^q(\R^n; E)))\colon b \in \mathcal{B}\}) \lesssim (\min(i_1, i_2)+1)C_0.
$$
\end{prop}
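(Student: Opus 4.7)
The plan is to reduce this to Lemma \ref{lem:ShiftWrongOrder} in essentially the same way that the proof of Proposition \ref{prop:mod1-1} reduced to Lemma \ref{lem:ShiftRBound}. The only difference is that this time we need to work with the ``wrong order'' mixed-norm $L^p(\R^m; L^q(\R^n; E))$, which is precisely what Lemma \ref{lem:ShiftWrongOrder} handles.

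First I would define, for each $b \in \mathcal{B}$, the kernel
\[
a_{K,b}^{i_1,i_2}(x_1,y_1) = |K| \mathop{\sum_{I_1, I_2 \in \calD^n}}_{I_1^{(i_1)} = I_2^{(i_2)} = K} h_{I_1}(y_1) h_{I_2}(x_1) B_{K, I_1, I_2, b} \in \calL(F),
\]
and form the associated averaging operators $A_{K,b}^{i_1,i_2}$ and the operator-valued shift $S_{\calD^n,b}^{i_1,i_2} = \sum_K \Delta_K^{i_2} A_{K,b}^{i_1,i_2} \Delta_K^{i_1}$. The computation at the end of the proof of Proposition \ref{prop:mod1-1} then shows that $S_{\calD^n,b}^{i_1,i_2}f = P_{\calD^n,b}^{i_1,i_2}f$ for all simple $F$-valued $f$.

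Next I would verify that the hypotheses of Lemma \ref{lem:ShiftWrongOrder} hold. The kernel $a_{K,b}^{i_1,i_2}(x_1,y_1)$ is a finite linear combination of indicators $1_{I_2 \times I_1}(x_1,y_1)$, where $(I_2 \times I_1)_{I_1^{(i_1)}=I_2^{(i_2)}=K}$ forms a (essentially) partition of $K \times K$, and on each such piece the operator-valued coefficient equals $\pm \tfrac{|K|}{|I_1|^{1/2}|I_2|^{1/2}} B_{K,I_1,I_2,b}$. Thus the required decomposition \eqref{eq:KernelSimple} holds with $a_{K,b,l} = \pm \tfrac{|K|}{|I_1|^{1/2}|I_2|^{1/2}} B_{K,I_1,I_2,b}$, and by hypothesis each such coefficient extends to $\calL(L^p(\R^m; L^q(\R^n \times Y^n; E)))$, giving an extension of $a_{K,b}^{i_1,i_2}(x_1,y_1)$ itself to this space. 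The assumed $\calR$-bound on $\{\tfrac{|K|}{|I_1|^{1/2}|I_2|^{1/2}} B_{K,I_1,I_2,b}\}$ in $\calL(L^p(\R^m; L^q(\R^n \times Y^n; E)))$ then yields
\[
\calR\bigl(\{a_{K,b}^{i_1,i_2}(x_1,y_1) : b \in \mathcal{B},\, K \in \calD^n,\, x_1, y_1 \in K\}\bigr) \le C_0,
\]
as the $\calR$-bound for a finite collection of indicator-selected elements from an $\calR$-bounded family is controlled by the same constant (pointwise only one summand survives).

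With these inputs, Lemma \ref{lem:ShiftWrongOrder} applies and delivers
\[
\calR\bigl(\{S_{\calD^n,b}^{i_1,i_2} \in \calL(L^p(\R^m; L^q(\R^n; E))) : b \in \mathcal{B}\}\bigr) \lesssim (\min(i_1,i_2)+1) C_0,
\]
which by the identification $S_{\calD^n,b}^{i_1,i_2} = P_{\calD^n,b}^{i_1,i_2}$ is exactly the claim. I do not expect any serious obstacle: the real work was already done in Lemma \ref{lem:ShiftWrongOrder}, and the mild point to double-check is that the kernels $a_{K,b}^{i_1,i_2}(x_1,y_1)$ satisfy the simple-function form \eqref{eq:KernelSimple}, which is immediate from the fact that the Haar functions $h_{I_1}, h_{I_2}$ take only finitely many values and each pair $(I_1,I_2)$ localises to a dyadic rectangle in $K \times K$.
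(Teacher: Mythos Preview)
Your proposal is correct and follows exactly the paper's approach: identify $P_{\calD^n,b}^{i_1,i_2}$ with the operator-valued shift $S_{\calD^n,b}^{i_1,i_2}$ via the same kernels as in Proposition~\ref{prop:mod1-1}, and then invoke Lemma~\ref{lem:ShiftWrongOrder} in place of Lemma~\ref{lem:ShiftRBound}. The paper's own proof is a two-line reference to precisely this reduction, and your added verification that the kernels $a_{K,b}^{i_1,i_2}$ have the simple-function form \eqref{eq:KernelSimple} (via the finitely-valued Haar functions on the dyadic children) is the only detail that needed unpacking.
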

\begin{proof}
As in the proof of Proposition \ref{prop:mod1-1} we have that $P_{\calD^n, b}^{i_1,i_2} = S_{\calD^n,b}^{i_1,i_2}$
for a certain operator-valued shift. Using Lemma \ref{lem:ShiftWrongOrder} we get the claim exactly as before.
\end{proof}
For the purposes of tri-parameter theory let us still go one step further. So suppose now that $F = L^p(\R^m; L^r(\R^k; E))$.
Notice that if we now consider boundedness in $L^q(\R^n; F)$ we can use Proposition \ref{prop:mod1-1}.
On the other hand, if we consider boundedness in $L^p(\R^m; L^q(\R^n; L^r(\R^k;E)))$ we need to use Proposition \ref{prop:mod1-2}.
The case $L^p(\R^m; L^r(\R^k; L^q(\R^n; E)))$ requires a new proposition.
\begin{prop}\label{prop:mod1-3}
Let $E$ be a UMD space with the property $(\alpha)$ of Pisier, $p,q, r \in (1,\infty)$ and $F = L^p(\R^m; L^r(\R^k; E))$.
Suppose we are given operators $B_{K, I_1, I_2, b} \in \calL(F)$ that can be extended
to operators in $\calL(L^p(\R^m; L^r(\R^k; L^q(\R^n \times Y^n; E))))$, and that
\begin{align*}
\calR\Big(\Big\{ \frac{|K|}{|I_1|^{1/2} |I_2|^{1/2}} B_{K, I_1, I_2, b} \in \calL(L^p(\R^m; L^r(\R^k&; L^q(\R^n \times Y^n; E))))\colon \\
 &K, I_1, I_2 \in \calD^n, b \in \mathcal{B}\Big\} \Big) \le C_0.
\end{align*}
Let $P_{\calD^n, b}^{i_1,i_2}$ be a model operator associated with the operators $B_{K, I_1, I_2, b}$.
Then we have
$$
\calR(\{P_{\calD^n, b}^{i_1,i_2} \in \calL(L^p(\R^m; L^r(\R^k; L^q(\R^n; E))))\colon b \in \mathcal{B}\}) \lesssim (\min(i_1, i_2)+1)C_0.
$$
\end{prop}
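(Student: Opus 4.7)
The plan is to follow exactly the pattern established in the proofs of Propositions \ref{prop:mod1-1} and \ref{prop:mod1-2}: recast $P^{i_1,i_2}_{\calD^n,b}$ as an operator-valued dyadic shift $S^{i_1,i_2}_{\calD^n,b}$ in the $\R^n$ variable, and then invoke the appropriate boundedness result for such shifts acting on the nested Bochner space $L^p(\R^m; L^r(\R^k; L^q(\R^n; E)))$. The main point to establish is a variant of Lemma \ref{lem:ShiftWrongOrder} in which the target space $L^p(\R^m;E)$ is replaced everywhere by $L^p(\R^m; L^r(\R^k; E))$.

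For the reduction, I would reuse the kernel construction from Proposition \ref{prop:mod1-1} verbatim: set
$$
a_{K,b}^{i_1, i_2}(x,y) = |K| \mathop{\sum_{I_1, I_2 \in \calD^n}}_{I_1^{(i_1)} = I_2^{(i_2)} = K} h_{I_1}(y) h_{I_2}(x) B_{K, I_1, I_2, b},
$$
which are piecewise constant on $K \times K$ with values in $\calL(F) = \calL(L^p(\R^m; L^r(\R^k; E)))$ and therefore have exactly the form required by \eqref{eq:KernelSimple}. The identity $S^{i_1,i_2}_{\calD^n,b}f = P^{i_1,i_2}_{\calD^n,b}f$ was already checked in the proof of Proposition \ref{prop:mod1-1}, and the hypothesized $\calR$-bound on the rescaled $B_{K,I_1,I_2,b}$ translates immediately (as in that proof) into an $\calR$-bound $\lesssim C_0$ on the values $a_{K,b}^{i_1,i_2}(x,y)$, now interpreted as operators in $\calL(L^p(\R^m; L^r(\R^k; L^q(\R^n \times Y^n; E))))$.

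It then suffices to establish the needed extension of Lemma \ref{lem:ShiftWrongOrder}. Its proof transfers line-by-line: the key space $L^p(\R^m; L^r(\R^k; L^q(\R^n; E)))$ is still UMD and still satisfies Pisier's property $(\alpha)$ (because $L^r(\R^k; E)$ does), so the Bourgain--Stein step, the Kahane--Khintchine equivalences and the two applications of $(\alpha)$ go through unchanged; the abstract decoupling estimate \eqref{eq:decoupling} is valid with any UMD coefficient space, so it can be used here with coefficient space $L^p(\R^m; L^r(\R^k; E))$ to handle the $\R^n$-martingale differences; and the $\calR$-boundedness hypothesis, with the extended interpretation, is exactly what is fed into the single step where $\calR$-boundedness is invoked. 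No new analytic ingredient appears.

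The main obstacle, really the only one, is bookkeeping: one must check that the tensor-extension mechanism built around \eqref{eq:Simplef} works identically when $L^p(\R^m;E)$ is replaced by $L^p(\R^m; L^r(\R^k; E))$, so that the piecewise pieces $a_{K,k,l}$ of the kernel genuinely extend to operators in $\calL(L^p(\R^m; L^r(\R^k; L^q(\R^n \times Y^n; E))))$ and the identity \eqref{eq:IDTrick} (which exploits the fact that $a_{K,k,l}$ is constant on each piece $S_{K,k,l}$) remains meaningful. Once this is granted, I would state the generalization of Lemma \ref{lem:ShiftWrongOrder}, note that its proof is a verbatim copy of the original with $L^p(\R^m;E)$ systematically replaced by $L^p(\R^m; L^r(\R^k; E))$, and then conclude Proposition \ref{prop:mod1-3} by copying the short argument of Proposition \ref{prop:mod1-2}.
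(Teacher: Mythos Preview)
Your proposal is correct and matches the paper's own proof essentially verbatim: the paper also rewrites $P_{\calD^n,b}^{i_1,i_2}$ as the operator-valued shift $S_{\calD^n,b}^{i_1,i_2}$ via the kernel construction of Proposition~\ref{prop:mod1-1}, and then appeals to ``an obvious variant of Lemma~\ref{lem:ShiftWrongOrder} (the proof is essentially the same)'' with $L^p(\R^m;E)$ replaced by $L^p(\R^m;L^r(\R^k;E))$. If anything, you have supplied more of the bookkeeping detail than the paper does.
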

\begin{proof}
As in the proof of Proposition \ref{prop:mod1-1} we have that $P_{\calD^n, b}^{i_1,i_2} = S_{\calD^n,b}^{i_1,i_2}$
for a certain operator-valued shift. Using an obvious variant of Lemma \ref{lem:ShiftWrongOrder} (the proof is essentially the same) we get the claim.
\end{proof}

\section{$\calR$-boundedness results for paraproducts}\label{sec:RbounbforPar}
\subsection{$\calR$-boundedness of one-parameter paraproducts}
We begin by giving a nice and elementary argument showing the $\calR$-boundedness of paraproducts
$\pi_{\calD^m, b_i} \in \calL(L^p(\R^m))$ when $\|b_i\|_{\BMO_{\calD^m}} \le 1$. This proof may be of independent interest. However, it is not needed
as, using other techniques, we prove a more general result right after.

In this more general result  we study $\calR$-boundedness in $L^p(\R^m; E)$ for a UMD function lattice $E$.
Recall that UMD-valued paraproducts are bounded in $L^p$. Therefore, it seems reasonable to suspect that Proposition \ref{prop:1parprodLattice} is true in the
generality that $E$ is a UMD space satisfying Pisier's property $(\alpha)$. However, showing that would certainly require different methods.
\begin{prop}\label{prop:1parprod}
Suppose that $\|b_i\|_{\BMO_{\calD^m}} \le 1$, $i \in \calI$, and $p \in (1,\infty)$. Then
$$
\calR(\{ \pi_{\calD^m, b_i} \in \calL(L^p(\R^m))\colon i \in \calI\}) \lesssim 1.
$$
\end{prop}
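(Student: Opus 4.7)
The plan is to reduce to a square function estimate and then exploit the standard BMO-$H^1$ duality (i.e.\ the "key estimate" for BMO recalled in Section \ref{ss:bmo}). Since we are in the scalar setting, Kahane--Khintchine tells us that the asserted $\calR$-boundedness in $\calL(L^p(\R^m))$ is equivalent to the vector-valued bound
$$
\Big\|\Big(\sum_{i \in \calI_0} |\pi_{\calD^m, b_i} f_i|^2\Big)^{1/2}\Big\|_{L^p(\R^m)}
\lesssim \Big\|\Big(\sum_{i \in \calI_0} |f_i|^2\Big)^{1/2}\Big\|_{L^p(\R^m)},
$$
for every finite $\calI_0 \subset \calI$ and functions $f_i$. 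By the duality $(L^p(\ell^2))^* = L^{p'}(\ell^2)$, it suffices to estimate the pairing $\sum_i \langle \pi_{\calD^m, b_i} f_i, g_i\rangle$ for a test sequence $(g_i)$ with $\|(\sum_i |g_i|^2)^{1/2}\|_{L^{p'}} \le 1$.

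I would then expand the pairing as
$$
\sum_i \langle \pi_{\calD^m, b_i} f_i, g_i\rangle
= \sum_i \sum_{V \in \calD^m} \langle b_i, h_V\rangle \langle f_i\rangle_V \langle g_i, h_V\rangle,
$$
and, for each fixed $i$, apply the one-parameter version of the BMO inequality from Section \ref{ss:bmo} with $\lambda_V = \langle b_i, h_V\rangle$ and $A_{i,V} = \langle f_i\rangle_V \langle g_i, h_V\rangle$. Since $\|b_i\|_{\BMO_{\calD^m}} \le 1$, this bounds the $i$-th inner sum by $\|S_{\calD^m}(A_i)\|_{L^1(\R^m)}$. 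The crucial pointwise estimate is
$$
S_{\calD^m}(A_i)(x)^2
= \sum_{V \in \calD^m} |\langle f_i\rangle_V|^2 |\langle g_i, h_V\rangle|^2 \frac{1_V(x)}{|V|}
\le (M_{\calD^m} f_i(x))^2 \cdot S_{\calD^m}(g_i)(x)^2,
$$
obtained by pulling the sup $|\langle f_i\rangle_V| \le M_{\calD^m} f_i(x)$ out of the $V$-sum.

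Putting these pieces together and using Cauchy--Schwarz pointwise in $i$,
$$
\sum_i \|S_{\calD^m}(A_i)\|_{L^1}
\le \Big\|\Big(\sum_i (M_{\calD^m} f_i)^2\Big)^{1/2}\Big\|_{L^p}
\Big\|\Big(\sum_i S_{\calD^m}(g_i)^2\Big)^{1/2}\Big\|_{L^{p'}}.
$$
The first factor is controlled by $\|(\sum_i |f_i|^2)^{1/2}\|_{L^p}$ via the Fefferman--Stein vector-valued maximal inequality (which is the $E=\R$ special case of Corollary \ref{cor:FS_M1}), and the second by $\|(\sum_i |g_i|^2)^{1/2}\|_{L^{p'}}\le 1$ by the one-parameter analogue of Corollary \ref{cor:biparmarSEQ} (i.e.\ the $L^{p'}(\ell^2)$-equivalence between a function and its dyadic square function, which is classical in the scalar case).

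The argument is mostly assembling familiar pieces, so there is no genuine obstruction; the only point worth some care is the interchange between the $\calR$-boundedness formulation and the $L^p(\ell^2)$ square function formulation, which in the scalar setting is harmless thanks to Khintchine's inequality. This route is also attractive because it makes the Carleson/BMO mechanism behind the bound fully transparent, while the more robust method used in the next proposition will be needed to push the statement to UMD function lattices $E$, where Kahane--Khintchine no longer identifies $\calR$-sums with $\ell^2$-squares.
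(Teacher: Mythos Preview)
Your proof is correct, but it is not the argument the paper gives for this proposition. The paper's proof of Proposition~\ref{prop:1parprod} deliberately avoids the $H^1$--BMO duality inequality: it works with the $L^1$ definition of BMO directly (without John--Nirenberg), builds for each $j$ a combined stopping family $\calF_{b_j,f_j}$ from the BMO stopping cubes of $b_j$ and the principal cubes of $f_j$, uses the resulting $L^\infty$ control on the martingale blocks of $b_j$ inside each stopping layer, and finishes with a vector-valued Carleson embedding. Your route --- dualise, apply the one-parameter BMO inequality $\sum_V |\langle b_i,h_V\rangle||A_{i,V}|\lesssim \|S(A_i)\|_{L^1}$, then the pointwise bound $S(A_i)\le M f_i\cdot S g_i$, then Fefferman--Stein and the $L^{p'}(\ell^2)$ square function --- is exactly the method the paper uses for the \emph{next} result, Proposition~\ref{prop:1parprodLattice}, specialised to $E=\R$. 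So your final remark has it backwards: the ``more robust method used in the next proposition'' is in fact the one you have written down; what the paper offers here is an alternative stopping-time proof, advertised as being of independent interest precisely because it sidesteps the $H^1$--BMO machinery.
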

\begin{proof}

This proof has the benefit that we can work with the $L^1$ definition of BMO directly, and we even don't need
to use the John--Nirenberg inequality.

We start with simple stopping time preliminaries.
Consider first a single function $b$ for which $\|b\|_{\BMO_{\calD^m}} \le 1$, and a fixed $J_0 \in \calD^m$. We set
$\calF^0_{b}(J_0) = \{J_0\}$, and let $\calF^1_{b}(J_0)$ consist of the maximal $J \in \calD^m$, $J \subset J_0$, for which $|\langle b \rangle_J - \langle b \rangle_{J_0}| > 4$.
Notice that for all $J \in \calF^1_{b}(J_0)$ we have
$$
4 < |\langle b \rangle_J - \langle b \rangle_{J_0}| \le \frac{1}{|J|} \int_J |b - \langle b \rangle_{J_0}|, 
$$
so that
$$
\Big| \bigcup_{J \in \calF^1_{b}(J_0)} J \Big| \le \frac{1}{4} \int_{J_0} |b - \langle b \rangle_{J_0}| \le \frac{|J_0|}{4}.
$$
Iterating this scheme we get the sparse family of stopping cubes defined by $\calF_b(J_0) = \bigcup_{j=0}^{\infty}\calF^j_{b}(J_0) $. 
A family of cubes is sparse if for each cube $Q$ in the family there is a subset $E_Q \subset Q$ so that $|E_Q| \gtrsim |Q|$ and so that
the sets $E_Q$ are pairwise disjoint.

For every dyadic $Q \subset J_0$ we let
$\pi_{\calF_b(J_0)} Q$ denote the minimal $J \in \calF_b(J_0)$ so that $Q \subset J$. The following estimate for martingale blocks is key to us:
$$
\Big\| \mathop{\sum_{Q \in \calD^m}}_{\pi_{\mathcal{F}_b(J_0)} Q = J} \Delta_Q b \Big\|_{L^{\infty}(\R^m)} \lesssim 1, \qquad J \in \mathcal{F}_b(J_0).
$$

Another stopping time we use is the standard principal cubes of a function $f \in L^1_{\loc}(\R^m)$. This means that
$\mathcal{S}_f^0(J_0) = \{J_0\}$, and we let $\mathcal{S}^1_{f}(J_0)$ consist of the maximal $J \in \calD^m$, $J \subset J_0$, for which $\langle |f| \rangle_J > 4\langle |f|\rangle_{J_0}$.
This time it is perhaps even more trivial that
$$
\Big| \bigcup_{J \in \mathcal{S}^1_{f}(J_0)} J \Big| \le \frac{|J_0|}{4}.
$$
Iterating this we get the sparse family of stopping cubes defined by $\mathcal{S}_f(J_0)$. 

Our final stopping time is established by combining these two in the following sense. Let $\calF^0_{b, f}(J_0) = \{J_0\}$ and let
$\calF^1_{b, f}(J_0)$ be the maximal cubes of $\calF^1_{b}(J_0) \cup \mathcal{S}^1_{f}(J_0)$. The final
sparse collection, established by iterating this, is denoted by $\calF_{b, f}(J_0) = \bigcup_{j=0}^{\infty}\calF^j_{b, f}(J_0)$.

After these preliminaries we give the actual proof. We need to show that
given a finite $\mathcal{J} \subset \calI$ and $f_j \in L^p(\R^{m})$ we have
$$
\Big\| \Big( \sum_{j \in \mathcal{J}} | \pi_{b_j} f_j |^2 \Big)^{1/2} \Big\|_{L^p(\R^{m})} \lesssim 
\Big\| \Big( \sum_{j \in \mathcal{J}} | f_j |^2 \Big)^{1/2} \Big\|_{L^p(\R^{m})},
$$
where we abbreviated $\pi_{b_j} = \pi_{\calD^m, b_j}$. 

Using calculations as in Section \ref{ss:estimates} it is clear that the following one-parameter analog of the results of that section holds:
\begin{equation}\label{eq:vvburk}
\Big\| \Big( \sum_{j \in \mathcal{J}} | f_j |^2 \Big)^{1/2} \Big\|_{L^p(\R^{m})}
\sim \Big\| \Big( \sum_{j \in \mathcal{J}} \sum_{Q \in \calD^m} |\Delta_Q f_j|^2 \Big)^{1/2} \Big\|_{L^p(\R^m)}.
\end{equation}
This is also stated in Lemma 2.1 of \cite{Vu1}.
Using this we have
\begin{equation*}
\Big\| \Big( \sum_{j \in \mathcal{J}} | \pi_{b_j} f_j |^2 \Big)^{1/2} \Big\|_{L^p(\R^{m})}
\sim \Big\| \Big( \sum_{j \in \mathcal{J}} \sum_{Q \in \calD^m} | \langle f_j \rangle_Q \Delta_Q b_j |^2 \Big)^{1/2} \Big\|_{L^p(\R^{m})}. 
\end{equation*}
Therefore, it is enough to fix an arbitrary $J_0 \in \calD^m$ and prove the estimate
\begin{equation*}
\Big\| \Big( \sum_{j \in \mathcal{J}} \sum_{\substack{Q \in \calD^m  \\Q \subset J_0}} | \langle f_j \rangle_Q \Delta_Q b_j |^2 \Big)^{1/2} \Big\|_{L^p(\R^{m})}
\lesssim \Big\| \Big( \sum_{j \in \mathcal{J}} | f_j |^2 \Big)^{1/2} \Big\|_{L^p(\R^{m})}.
\end{equation*}

For every $j \in \calJ$ we set $\calF_j := \calF_{b_j, f_j}(J_0)$.
Then, we estimate
\begin{equation}\label{eq:UnderStopping}
\begin{split}
\Big\| \Big( \sum_{j \in \mathcal{J}}  \sum_{\substack{Q \in \calD^m  \\Q \subset J_0}}&| \langle f_j \rangle_Q \Delta_Q b_j |^2 \Big)^{1/2} \Big\|_{L^p(\R^{m})} \\
& \lesssim \Big\| \Big( \sum_{j \in \mathcal{J}} \sum_{J \in  \calF_j} \langle |f_j| \rangle_J ^2
\sum_{\substack{Q \in \calD^m \\ \pi_{\calF_j}Q=J}} |  \Delta_Q b_j |^2 \Big)^{1/2} \Big\|_{L^p(\R^{m})} \\
& \sim \Big\| \Big( \sum_{j \in \mathcal{J}} \sum_{J \in  \calF_j} \langle |f_j| \rangle_J ^2
\Big|\sum_{\substack{Q \in \calD^m \\ \pi_{\calF_j}Q=J}}  \Delta_Q b_j \Big|^2 \Big)^{1/2} \Big\|_{L^p(\R^{m})}.
\end{split}
\end{equation}
The last step applied \eqref{eq:vvburk} again.
Since for every $j \in \calJ$ and $J \in \calF_j$ we have
$$
\Big|\sum_{\substack{Q \in \calD^m \\ \pi_{\calF_j}Q=J}}  \Delta_Q b_j \Big|
\lesssim 1_J,
$$
the right hand side of \eqref{eq:UnderStopping} is further bounded by
\begin{equation*}
\begin{split}
\Big\| \Big( \sum_{j \in \mathcal{J}} \sum_{J \in  \calF_j} \langle |f_j| \rangle_J ^2
1_J \Big)^{1/2} \Big\|_{L^p(\R^{m})}
\lesssim \Big\| \Big( \sum_{j \in \mathcal{J}} | f_j |^2 \Big)^{1/2} \Big\|_{L^p(\R^{m})}.
\end{split}
\end{equation*}
The last step used a version of the Carleson embedding theorem stated at least in Lemma 2.2 of \cite{Vu1}.
\end{proof}

With a different proof we can manage the generality that $E$ is a UMD function lattice. The method is very similar to those that
we use with bi-parameter paraproducts below.
\begin{prop}\label{prop:1parprodLattice}
Suppose that $\|b_i\|_{\BMO_{\calD^m}} \le 1$, $i \in \calI$, $E$ is a UMD function lattice and $p \in (1,\infty)$. Then
$$
\calR(\{ \pi_{\calD^m, b_i} \in \calL(L^p(\R^m; E))\colon i \in \calI\}) \lesssim 1.
$$
\end{prop}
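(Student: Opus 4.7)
The plan is to adapt the stopping-time argument of Proposition \ref{prop:1parprod} to the UMD function lattice setting, replacing the scalar tools by their lattice counterparts from Section \ref{ss:maxestimates}.

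First, by Lemma \ref{lem:KKlemma} applied in $L^p(\R^m;E)$ (which has finite cotype since $E$ is a UMD function lattice), the claimed $\calR$-boundedness is equivalent to the square function estimate
\[
\BNorm{\Bigl(\sum_{i \in \calJ} |\pi_{\calD^m, b_i} f_i|^2\Bigr)^{1/2}}{L^p(\R^m; E)} \lesssim \BNorm{\Bigl(\sum_{i \in \calJ} |f_i|^2\Bigr)^{1/2}}{L^p(\R^m; E)}
\]
for every finite $\calJ \subset \calI$ and every choice of $f_i \in L^p(\R^m; E)$. Since $\Delta_Q(\pi_{\calD^m, b_i} f_i) = \langle f_i\rangle_Q \Delta_Q b_i$, the one-parameter analog of Corollary \ref{cor:biparmarSEQ}, proved by the same combination of Khintchine--Maurey and the UMD-valued Burkholder--Gundy inequality, turns the left-hand side into $\BNorm{(\sum_i \sum_Q |\langle f_i\rangle_Q|^2 |\Delta_Q b_i|^2)^{1/2}}{L^p(\R^m; E)}$, and the pointwise lattice bound $|\langle f_i\rangle_Q| \le \langle |f_i|\rangle_Q$ permits replacing $|\langle f_i\rangle_Q|$ by $\langle |f_i|\rangle_Q$.

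Next I would fix a top cube $J_0$ and, for each $i$, build a combined stopping family $\calF_i$ on $J_0$ that triggers whenever either the BMO oscillation cutoff for $b_i$ fires or a principal-cubes criterion fires for the scalar majorant $\psi_i(x) := |f_i(x)|_E$. The family $\calF_i$ is sparse, and when $\pi_{\calF_i} Q = J$ one has both the BMO block bound $|\sum_{Q:\pi_{\calF_i}Q=J}\Delta_Q b_i| \lesssim 1_J$ and the scalar comparison $\langle\psi_i\rangle_Q \le 4\langle\psi_i\rangle_J$. Regrouping the $Q$-sum over $J \in \calF_i$ and dominating $\langle |f_i|\rangle_Q(x) \le M_{\calD^m, E}(1_J f_i)(x)$ pointwise in the lattice at each $x \in Q \subset J$, I would then re-apply the square function equivalence to the outer index $(i, J)$ --- with lattice coefficients given by the lattice essential supremum $\alpha_{i,J}$ of $M_{\calD^m, E}(1_J f_i)$ over $J$, which is constant in $x$ --- to swap the inner sum $\sum_{Q:\pi_{\calF_i}Q = J} |\Delta_Q b_i|^2$ back into $\big|\sum_{Q:\pi_{\calF_i}Q = J} \Delta_Q b_i\big|^2 \lesssim 1_J$ via the BMO block bound. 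What remains is of the shape $\BNorm{(\sum_i \sum_{J \in \calF_i} \alpha_{i,J}^2 1_J)^{1/2}}{L^p(\R^m; E)}$, which is controlled by combining the sparsity of $\calF_i$ and the geometric growth of $\langle\psi_i\rangle_J$ along stopping chains (principal cubes) to obtain a lattice pointwise upper bound by $(\sum_i (M_{\calD^m, E} f_i)^2)^{1/2}$, and then applying the lattice Fefferman--Stein inequality --- the one-parameter form of Corollary \ref{cor:FS_M1} --- to conclude via
\[
\BNorm{\Bigl(\sum_i (M_{\calD^m, E} f_i)^2\Bigr)^{1/2}}{L^p(\R^m; E)} \lesssim \BNorm{\Bigl(\sum_i |f_i|^2\Bigr)^{1/2}}{L^p(\R^m; E)}.
\]

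The main obstacle is the swap of the inner square function for a block: in the scalar proof this is painless because the combined stopping yields a pointwise upper bound $\langle|f_j|\rangle_Q \le 4\langle|f_j|\rangle_J$ whenever $\pi_{\calF_j}Q = J$. In the lattice case the principal cubes for the scalar majorant $\psi_i = |f_i|_E$ only control the scalar averages $\langle\psi_i\rangle_Q$, not the lattice-valued averages $\langle|f_i|\rangle_Q$, so the lattice-valued coefficients $\alpha_{i,J}$ needed for the swap must be extracted from the lattice maximal function $M_{\calD^m, E}$ and its boundedness (Proposition \ref{prop:Rubio}). This is the specific place where $E$ being a function lattice --- rather than a general UMD space with Pisier's property $(\alpha)$ --- is genuinely used, exactly the restriction flagged in the discussion preceding the statement.
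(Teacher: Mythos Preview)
Your proposal has a genuine gap at the step involving the ``lattice essential supremum'' $\alpha_{i,J}$. You correctly identify that the principal-cubes stopping on the scalar majorant $\psi_i = |f_i|_E$ does not control the lattice averages $\langle |f_i|\rangle_Q$, and you try to repair this by taking $\alpha_{i,J}(\omega) := \esssup_{y\in J} M_{\calD^m,E}(1_J f_i)(y,\omega)$ so that the coefficient is constant in $x$ and the square-function swap can be performed. The swap itself is fine, but the subsequent claim that $\sum_{J\in\calF_i}\alpha_{i,J}^2 1_J(x)\lesssim (M_{\calD^m,E}f_i(x))^2$ pointwise in the lattice is false: $\alpha_{i,J}(\omega)$ is a supremum over \emph{all} $y\in J$ and need not be bounded by the value of $M_{\calD^m,E}f_i$ at the particular point $x\in J$; neither sparsity of $\calF_i$ nor the geometric growth of the \emph{scalar} averages $\langle\psi_i\rangle_J$ gives any control on these lattice suprema. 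The dichotomy is that $\langle|f_i|\rangle_J$ would yield the correct final pointwise sparse bound (applied fibrewise in $\omega$) but cannot be inserted before the swap, while $\alpha_{i,J}$ can be inserted but is uncontrollable afterwards.

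The paper's proof avoids this obstruction entirely by abandoning the stopping-time route and arguing by duality. One pairs $\sum_j \pi_{b_j}f_j$ against a sequence $g_j\in L^{p'}(\R^m;E^*)$, expands, and uses the $H^1$--BMO inequality $\sum_J |\langle b_j,h_J\rangle|\,|a_J|\lesssim \|S(a)\|_{L^1}$ for each fixed $j$ to replace the BMO coefficients by an $L^1$ square function. A pointwise $\ell^2$-duality in the lattice then separates the pairing into $M_{\calD^m,E}f_j$ against the lattice square function of $g_j$, and the proof closes with one application of the lattice Fefferman--Stein bound (Proposition~\ref{prop:Rubio} for $\ell^2(E)$) and one of the square-function equivalence for $E^*$. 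No swap, no stopping, and no uncontrolled suprema are needed.
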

\begin{proof}
Denote $\pi_{b_i} = \pi_{\calD^m, b_i}$. For all finite subsets $J \subset \calI$ and $f_j \in L^p(\R^m;E)$, $g_j \in L^{p'}(\R^m ;E^*)$ (recall that 
$E^* = E'$) we will show that
\begin{align*}
\Big|\sum_{j \in \calJ} \int_{\R^m}& \{\pi_{b_j} f_j(x), g_j(x)\}_E \ud x\Big| \\ 
&\lesssim \Big\| \Big( \sum_{j \in \calJ}  |f_j|^2 \Big)^{1/2} \Big\|_{L^p(\R^m ;E)}
\Big\| \Big( \sum_{j \in \calJ}  |g_j|^2 \Big)^{1/2} \Big\|_{L^{p'}(\R^m ;E^*)}.
\end{align*}
This is enough as can be seen by using Lemma \ref{lem:randomduality} and
Lemma \ref{lem:KKlemma}. As $g_j(x) \in E^* = E'$ the pairings $\{\cdot, \cdot\}_E$ are just integrals
(over some space $\Omega$ appearing in the definition of function lattices, see Section \ref{ss:fl}). This is convenient for checking the validity of some of the manipulations below.

Recalling that
$$
\pi_{b_j} f_j(x) = \sum_{J \in \calD^m} \langle b_j, h_J\rangle \langle f_j \rangle_J h_J(x), \qquad x \in \R^m,
$$
we get
\begin{align*}
\Big| \sum_{j \in \calJ} \int_{\R^m} \{\pi_{b_j} f_j(x), g_j(x)\}_E \ud x \Big|
&= \Big| \sum_{j \in \calJ} \sum_{J \in \calD^m} \langle b_j, h_J\rangle \{  \langle f_j \rangle_J, \langle g_j, h_J \rangle \}_E \Big| \\
&\le  \sum_{j \in \calJ} \sum_{J \in \calD^m} |\langle b_j, h_J\rangle| | \{  \langle f_j \rangle_J, \langle g_j, h_J \rangle \}_E | \\
&\lesssim \sum_{j \in \calJ} \int_{\R^m} \Big(\sum_{J \in \calD^m} | \{  \langle f_j \rangle_J, \langle g_j, h_J \rangle \}_E |^2 \frac{1_J(y)}{|J|} \Big)^{1/2}\ud y,
\end{align*}
where for each $j$ we used $\|b_j\|_{\BMO_{\calD^m}} \le 1$ via the following inequality
$$
\sum_{J \in \calD^m} |\langle b_j, h_J\rangle|  |a_J| \lesssim \int_{\R^m} \Big(\sum_{J \in \calD^m} |a_J|^2 \frac{1_J(y)}{|J|} \Big)^{1/2}\ud y.
$$
Here $(a_J)_{J \in \calD^m}$ can be an arbitrary sequence of scalars (see Section \ref{ss:bmo}).

For a fixed $y \in \R^m$ we have (by using $\ell^2$ duality) that
\begin{align*}
\Big(\sum_{J \in \calD^m} | \{  \langle f_j \rangle_J, \langle g_j, &h_J \rangle \}_E |^2 \frac{1_J(y)}{|J|} \Big)^{1/2} \\
&\le \Big\{ M_{\calD^m, E} f_j(y), \Big( \sum_{J \in \calD^m} |\langle g_j, h_J \rangle|^2 \frac{1_J(y)}{|J|}  \Big)^{1/2} \Big\}_E.
\end{align*}
Therefore, we have
\begin{align*}
\Big| \sum_{j \in \calJ} \int_{\R^m} &\{\pi_{b_j} f_j(x), g_j(x)\}_E \ud x \Big| \\
&\lesssim \int_{\R^m} \sum_{j \in \calJ} \Big\{ M_{\calD^m, E} f_j(y), \Big( \sum_{J \in \calD^m} |\langle g_j, h_J \rangle|^2 \frac{1_J(y)}{|J|}  \Big)^{1/2} \Big\}_E
\ud y \\
&\le  \int_{\R^m} \Big\{ \Big( \sum_{j \in \calJ} |M_{\calD^m, E} f_j(y)|^2   \Big)^{1/2}, \Big( \sum_{j \in \calJ}
\sum_{J \in \calD^m} |\langle g_j, h_J \rangle|^2 \frac{1_J(y)}{|J|}  \Big)^{1/2} \Big\}_E\ud y \\
&\le \Big\| \Big( \sum_{j \in \calJ} |M_{\calD^m, E} f_j|^2   \Big)^{1/2} \Big\|_{L^p(\R^m; E)} 
 \Big\|  \Big( \sum_{j \in \calJ} \sum_{J \in \calD^m} |\langle g_j, h_J \rangle|^2 \frac{1_J}{|J|}  \Big)^{1/2} \Big\|_{L^{p'}(\R^m; E^*)}.
\end{align*}
That 
\begin{equation*}
\Big\| \Big( \sum_{j \in \calJ} |M_{\calD^m, E} f_j|^2   \Big)^{1/2} \Big\|_{L^p(\R^m; E)}
\lesssim \Big\| \Big( \sum_{j \in \calJ} | f_j|^2   \Big)^{1/2} \Big\|_{L^p(\R^m; E)}
\end{equation*}
holds follows from Proposition \ref{prop:Rubio} (see also the discussion below the proposition).
The next estimate follows from the one-parameter version of Corollary \ref{cor:biparmarSEQ}:
$$
\Big\|  \Big( \sum_{j \in \calJ} \sum_{J \in \calD^m} |\langle g_j, h_J \rangle|^2 \frac{1_J}{|J|}  \Big)^{1/2} \Big\|_{L^{p'}(\R^m; E^*)}
\lesssim \Big\|  \Big( \sum_{j \in \calJ} |g_j|^2  \Big)^{1/2} \Big\|_{L^{p'}(\R^m; E^*)}.
$$
\end{proof}

\subsection{$\calR$-boundedness of bi-parameter paraproducts}
\begin{prop}\label{prop:2parprodstan}
Suppose that $\|b_i\|_{\BMO_{\textup{prod}}^{\calD^m, \calD^k}} \le 1$, $i \in \calI$, $E$ is a UMD function lattice and $p,r \in (1,\infty)$. Then
$$
\calR(\{ \Pi_{\calD^m, \calD^k, b_i} \in \calL(L^p(\R^{m}; L^r(\R^k;E)))\colon i \in \calI\}) \lesssim 1.
$$
\end{prop}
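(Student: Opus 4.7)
The plan is to adapt the duality-based argument of Proposition \ref{prop:1parprodLattice} to the bi-parameter setting, replacing the one-parameter $\BMO$ testing inequality by the product BMO estimate from Section \ref{ss:bmo}, and replacing one-parameter Fefferman--Stein by its bi-parameter analogue (Corollary \ref{cor:FS_Strong}). By Lemma \ref{lem:randomduality} combined with Lemma \ref{lem:KKlemma} (applied twice, in $L^r(\R^k;E)$ and then in $L^p(\R^m;\cdot)$, using that $E^*$ is itself a UMD function lattice), it is enough to prove that for any finite $\calJ \subset \calI$, $f_j \in L^p(\R^m; L^r(\R^k;E))$ and $g_j \in L^{p'}(\R^m; L^{r'}(\R^k;E^*))$ we have
\begin{align*}
\Big| \sum_{j \in \calJ} \iint \{\Pi_{\calD^m,\calD^k,b_j} f_j, g_j\}_E \Big|
\lesssim \Big\| \Big( \sum_{j} |f_j|^2\Big)^{1/2}\Big\|_{L^p(L^r(E))}
\Big\| \Big( \sum_{j} |g_j|^2\Big)^{1/2}\Big\|_{L^{p'}(L^{r'}(E^*))}.
\end{align*}

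First I expand the pairing on the left as $\sum_{j,J,L}\langle b_j,h_J\otimes h_L\rangle\{\langle f_j\rangle_{J\times L},\langle g_j,h_J\otimes h_L\rangle\}_E$. For each fixed $j$ the product BMO inequality, applied with scalar sequences $\lambda_{J,L}=\langle b_j,h_J\otimes h_L\rangle$ and $A_{J,L}=\{\langle f_j\rangle_{J\times L},\langle g_j,h_J\otimes h_L\rangle\}_E$, bounds the sum over $J,L$ by the $L^1(\R^{m+k})$-norm of the associated dyadic square function $S_{\calD^m,\calD^k}(A)$. Using $\ell^2$-duality inside the function lattice $E$ I get the pointwise estimate
\begin{equation*}
\Big(\sum_{J,L}|A_{J,L}|^2\frac{1_{J\times L}}{|J||L|}\Big)^{1/2}
\le \Big\{\calM_{\calD^m,\calD^k,E}f_j,\; \Big(\sum_{J,L}|\langle g_j,h_J\otimes h_L\rangle|^2\frac{1_{J\times L}}{|J||L|}\Big)^{1/2}\Big\}_E,
\end{equation*}
since $|\langle f_j\rangle_{J\times L}|_E \le \calM_{\calD^m,\calD^k,E}f_j$ on $J\times L$.

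Next I apply Cauchy--Schwarz in $j \in \calJ$ inside the lattice pairing, then Hölder between $L^p(L^r(E))$ and $L^{p'}(L^{r'}(E^*))$. The $f$-side factor is $\|(\sum_j|\calM_{\calD^m,\calD^k,E}f_j|^2)^{1/2}\|_{L^p(L^r(E))}$, which is controlled by $\|(\sum_j|f_j|^2)^{1/2}\|_{L^p(L^r(E))}$ via Corollary \ref{cor:FS_Strong}. For the $g$-side factor I note that $|\langle g_j,h_J\otimes h_L\rangle|_{E^*}^2 \cdot 1_{J\times L}/(|J||L|) = |\Delta_{J\times L}g_j|_{E^*}^2$, so Corollary \ref{cor:biparmarSEQ} (applied with $E$ replaced by $E^*$, which is again a UMD function lattice) gives the equivalence with $\|(\sum_j|g_j|^2)^{1/2}\|_{L^{p'}(L^{r'}(E^*))}$. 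Combining these yields the desired bilinear bound.

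The step I expect to be the main obstacle is verifying that the product BMO testing inequality cleanly decouples from the $E$-valued structure: since $\{\cdot,\cdot\}_E$ outputs scalars, the inequality is applied to a scalar sequence for each fixed $j$, so no vector-valued version of the product BMO estimate is required. A secondary point is that the lattice operations $|\cdot|_E$, pointwise suprema and $\ell^2$-aggregations interact correctly with the pairing $\{\cdot,\cdot\}_E$, which is guaranteed by the function-lattice setting of Section \ref{ss:fl}; this is precisely where the restriction to function lattices (rather than general UMD spaces with property $(\alpha)$) is used, matching the scope of the proposition.
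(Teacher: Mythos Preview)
Your proposal is correct and follows essentially the same approach as the paper's proof: reduce to a bilinear estimate via Lemmas \ref{lem:randomduality} and \ref{lem:KKlemma}, apply the scalar product-BMO inequality from Section \ref{ss:bmo} for each fixed $j$, use $\ell^2$-duality inside the lattice pairing together with the pointwise domination by $\calM_{\calD^m,\calD^k,E}f_j$, then Cauchy--Schwarz in $j$ and H\"older, and conclude with Corollary \ref{cor:FS_Strong} on the $f$-side and Corollary \ref{cor:biparmarSEQ} on the $g$-side. Your identification of where the function-lattice assumption enters also matches the paper's reasoning.
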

\begin{proof}
Denote $\Pi_{b_i} := \Pi_{\calD^m, \calD^k, b_i}$.
We will show that given a finite $\mathcal{J} \subset \calI$, $f_j \in L^p(\R^{m}; L^r(\R^k;E))$ and $g_j \in L^{p'}(\R^{m}; L^{r'}(\R^k;E^*))$
we have
\begin{align*}
\Big| \sum_{j \in \mathcal{J}}& \iint_{\R^{m+k}} \big\{ \Pi_{b_j} f_j(x_1, x_2),  g_j(x_1,x_2)\big\}_E  \ud x_1 \ud x_2\Big| \\ 
&\lesssim
\Big\| \Big( \sum_{j \in \mathcal{J}} | f_j |^2 \Big)^{1/2} \Big\|_{L^p(\R^{m}; L^r(\R^k;E))}
\Big\| \Big( \sum_{j \in \mathcal{J}} | g_j |^2 \Big)^{1/2} \Big\|_{L^{p'}(\R^{m}; L^{r'}(\R^k;E^*))}.
\end{align*}
That this is enough is justified like in Proposition \ref{prop:1parprodLattice}.

Recall that
$$
\Pi_{b_j} f_j(x_1,x_2) = \mathop{\sum_{V \in \calD^m}}_{U \in \calD^k} \lambda_{V,U}^j \bla f_j \bra_{V \times U} h_V(x_1) h_U(x_2),
$$
where the scalars $\lambda_{V,U}^j$ satisfy for all $j$ and all scalars $A_{V,U}$ that
$$
\mathop{\sum_{V \in \calD^m}}_{U \in \calD^k} |\lambda_{V,U}^j| |A_{V,U}| \lesssim \iint_{\R^{m+k}} \Big( \mathop{\sum_{V \in \calD^m}}_{U \in \calD^k}
|A_{V,U}|^2 \frac{1_V \otimes 1_U}{|V| |U|} \Big)^{1/2}.
$$
This gives
\begin{align*}
\Big| \sum_{j \in \mathcal{J}}& \iint_{\R^{m+k}} \big\{ \Pi_{b_j} f_j,  g_j\big\}_E  \Big|  \\
&\le \sum_{j \in \mathcal{J}} \mathop{\sum_{V \in \calD^m}}_{U \in \calD^k} \big|\lambda_{V,U}^j| |\big\{ \bla f_j \bra_{V \times U}, \bla g_j, h_V \otimes h_U \bra\big\}_E\big| \\
& \lesssim \sum_{j \in \mathcal{J}} \iint_{\R^{m+k}} \Big( \mathop{\sum_{V \in \calD^m}}_{U \in \calD^k}
\big\{ \calM_{\calD^m, \calD^k, E}f_j, \big|\bla g_j, h_V \otimes h_U \bra\big|\big\}_E^2 \frac{1_V\otimes1_U}{|V| |U|} \Big)^{1/2},
\end{align*}
which is further bounded by
\begin{align*}
& \iint_{\R^{m+k}} \sum_{j \in \mathcal{J}} \Big\{ \calM_{\calD^m, \calD^k, E}f_j, \Big( \mathop{\sum_{V \in \calD^m}}_{U \in \calD^k} \big|\bla g_j, h_V \otimes h_U \bra\big|^2 
\frac{1_V\otimes1_U}{|V| |U|} \Big)^{1/2} \Big\}_E \\
&\le \Big\| \Big( \sum_{j \in \mathcal{J}} |\calM_{\calD^m, \calD^k, E}f_j|^2 \Big)^{1/2} \Big\|_{L^p(\R^{m}; L^r(\R^k;E))} \\
&\hspace{2cm}\times \Big\| \Big(\sum_{j \in \mathcal{J}} \mathop{\sum_{V \in \calD^m}}_{U \in \calD^k} \big|\bla g_j, h_V \otimes h_U \bra\big|^2 \frac{1_V\otimes1_U}{|V| |U|} \Big)^{1/2} \Big\|_{L^{p'}(\R^{m}; L^{r'}(\R^k;E^*))}.
\end{align*}
The proof is finished by applying Corollary \ref{cor:FS_Strong} and Corollary \ref{cor:biparmarSEQ}.
\end{proof}

\begin{prop}\label{prop:2parprodmixed}
Suppose $\|b_i\|_{\BMO_{\textup{prod}}^{\calD^m,\calD^k}} \le 1$, $i \in \calI$, $E$ is a UMD function lattice and $p,r  \in (1,\infty)$. Then
$$
\calR(\{ \Pi^{\textup{mixed}}_{\calD^m, \calD^k, b_i} \in \calL(L^p(\R^{m}; L^r(\R^k;E)))\colon i \in \calI\}) \lesssim 1.
$$
\end{prop}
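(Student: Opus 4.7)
The plan is to mirror the duality-based argument used in Proposition \ref{prop:2parprodstan}, the single new ingredient being an application of Minkowski's integral inequality to decouple the ``mixed'' $(V,U)$-coupling produced by the asymmetric structure of $\Pi^{\text{mixed}}_{\calD^m, \calD^k, b_j}$. By Lemma \ref{lem:randomduality} together with Lemma \ref{lem:KKlemma}, it suffices to prove, for finite $\calJ \subset \calI$, $f_j \in L^p(\R^m; L^r(\R^k; E))$ and $g_j \in L^{p'}(\R^m; L^{r'}(\R^k; E^*))$, that
\begin{equation*}
\Big| \sum_{j \in \calJ} \iint_{\R^{m+k}} \big\{\Pi^{\text{mixed}}_{\calD^m, \calD^k, b_j} f_j,\, g_j\big\}_E \Big|
\lesssim \Big\| \big(\textstyle\sum_j |f_j|^2\big)^{1/2}\Big\|_{L^p(L^r(E))} \Big\| \big(\textstyle\sum_j |g_j|^2\big)^{1/2}\Big\|_{L^{p'}(L^{r'}(E^*))}.
\end{equation*}
Expanding the pairing with $\lambda^j_{V,U} = \langle b_j, h_V \otimes h_U \rangle$, the left-hand side becomes $|\sum_j \sum_{V, U} \lambda^j_{V,U} A^j_{V,U}|$ where $A^j_{V,U} = \{\langle f_j, h_V \otimes \tfrac{1_U}{|U|}\rangle,\, \langle g_j, \tfrac{1_V}{|V|} \otimes h_U\rangle\}_E$, and applying the product BMO estimate (Section \ref{ss:bmo}) to each $b_j$ controls this by $\sum_j \iint (\sum_{V, U} |A^j_{V,U}|^2 \tfrac{1_V \otimes 1_U}{|V||U|})^{1/2} \, dy_1 \, dy_2$.

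For $y_1 \in V$ and $y_2 \in U$, the next step uses $|\{a, b\}_E| \le \{|a|, |b|\}_E$ together with the pointwise lattice bounds
\begin{equation*}
\Big|\Big\langle f_j, h_V \otimes \frac{1_U}{|U|}\Big\rangle\Big| \le M^2_{\calD^k, E} |\langle f_j, h_V\rangle_1|(y_2), \qquad
\Big|\Big\langle g_j, \frac{1_V}{|V|} \otimes h_U\Big\rangle\Big| \le M^1_{\calD^m, E^*} |\langle g_j, h_U\rangle_2|(y_1),
\end{equation*}
which hold in $E$ and $E^*$ respectively. After this estimate the $V$-dependence sits entirely on the $E$-side of the pairing and the $U$-dependence on the $E^*$-side, but they remain coupled through the integral over $\Omega$ defining $\{\cdot,\cdot\}_E$. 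The hard step is to break this coupling: writing the pairing as $\int_\Omega (\cdot)(\cdot)\, d\mu$ and applying Minkowski's integral inequality in the double index $(V, U)$ yields
\begin{equation*}
\Big(\sum_{V, U} |A^j_{V,U}|^2 \frac{1_V(y_1) 1_U(y_2)}{|V||U|}\Big)^{1/2}
\le \big\{\tilde A^j(y_1, y_2),\, \tilde B^j(y_1, y_2)\big\}_E,
\end{equation*}
where $\tilde A^j(y_1, y_2) = (\sum_V (M^2_{\calD^k, E}|\langle f_j, h_V\rangle_1|(y_2))^2 \tfrac{1_V(y_1)}{|V|})^{1/2} \in E$ and $\tilde B^j(y_1, y_2)$ is the analogous $U$-square function built from $g_j$ via $M^1_{\calD^m, E^*}$, living in $E^*$.

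To finish, I would apply the scalar Cauchy--Schwarz in $j$ inside the $E$-pairing and Hölder in $(y_1, y_2)$ to reduce matters to
\begin{equation*}
\Big\|\big(\textstyle\sum_j (\tilde A^j)^2\big)^{1/2}\Big\|_{L^p(L^r(E))} \Big\|\big(\textstyle\sum_j (\tilde B^j)^2\big)^{1/2}\Big\|_{L^{p'}(L^{r'}(E^*))}.
\end{equation*}
For the first factor, since $\tfrac{1_V(y_1)}{|V|}$ is constant in $y_2$ I would pull it inside $M^2_{\calD^k, E}$, invoke the $\ell^2(j, V; E)$-valued maximal inequality for $M^2_{\calD^k, E}$ (the obvious $M^2$-analog of Corollaries \ref{cor:FS_M1}--\ref{cor:FS_Strong}) to remove the maximal function, and then recognise the remaining expression as $\|(\sum_{j, V} |\Delta^1_V f_j|^2)^{1/2}\|_{L^p(L^r(E))}$, which by Corollary \ref{cor:biparmarSEQ} is comparable to $\|(\sum_j |f_j|^2)^{1/2}\|_{L^p(L^r(E))}$. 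The symmetric treatment of $\tilde B^j$ in $E^*$ using $M^1_{\calD^m, E^*}$ then closes the argument. The main obstacle, as advertised, is the Minkowski decoupling step: unlike in Proposition \ref{prop:2parprodstan}, where one pairing factor is directly bounded by the bi-parameter strong maximal function $\calM_{\calD^m, \calD^k, E} f_j$ (free of $(V, U)$), here both factors carry joint $(V, U)$ dependence, and the $(V, U)$-square sum can only be untangled by passing through the pointwise-in-$\omega$ product of two independent one-parameter square functions.
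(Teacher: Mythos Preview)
Your argument is correct and is essentially the paper's own proof. The only cosmetic difference is in how the decoupling of the $(V,U)$-square sum is phrased: you apply Minkowski's integral inequality once in the double index $(V,U)$ (passing the $\ell^2_{V,U}$-norm inside the $\Omega$-integral), whereas the paper carries out two sequential $\ell^2$-duality steps, first in $U$ and then in $V$, arriving at the identical pairing
\[
\Big\{\Big(\sum_{V}\tfrac{1_V}{|V|}\otimes\big[M_{\calD^k,E}\langle f_j,h_V\rangle_1\big]^2\Big)^{1/2},\ \Big(\sum_{U}\big[M_{\calD^m,E^*}\langle g_j,h_U\rangle_2\big]^2\otimes\tfrac{1_U}{|U|}\Big)^{1/2}\Big\}_E.
\]
From this point on, the finishing steps (Cauchy--Schwarz in $j$, H\"older, the lattice Fefferman--Stein estimates for $M^2_{\calD^k,E}$ and $M^1_{\calD^m,E^*}$, and Corollary~\ref{cor:biparmarSEQ}) are exactly as in the paper.
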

\begin{proof}
Denote $\Pi^{\textup{mixed}}_{b_i} = \Pi^{\textup{mixed}}_{\calD^m, \calD^k, b_i}$.
We will show that given a finite $\mathcal{J} \subset \calI$, $f_j \in L^p(\R^{m}; L^r(\R^k;E))$ and $g_j \in L^{p'}(\R^{m}; L^{r'}(\R^k;E^*))$
we have
\begin{align*}
\Big| \sum_{j \in \mathcal{J}}& \iint_{\R^{m+k}} \big\{ \Pi_{b_j}^{\textup{mixed}} f_j(x_1, x_2),  g_j(x_1,x_2)\big\}_E  \ud x_1 \ud x_2\Big| \\ 
&\lesssim
\Big\| \Big( \sum_{j \in \mathcal{J}} | f_j |^2 \Big)^{1/2} \Big\|_{L^p(\R^{m}; L^r(\R^k;E))}
\Big\| \Big( \sum_{j \in \mathcal{J}} | g_j |^2 \Big)^{1/2} \Big\|_{L^{p'}(\R^{m}; L^{r'}(\R^k;E^*))}.
\end{align*}
That this is enough is justified like in Proposition \ref{prop:1parprodLattice}.

Estimating as in the previous proposition we get
\begin{align*}
&\Big| \sum_{j \in \mathcal{J}} \iint_{\R^{m+k}} \big\{ \Pi_{b_j}^{\textup{mixed}} f_j,  g_j\big\}_E \Big| \\
&\lesssim \sum_{j \in \mathcal{J}} \iint_{\R^{m+k}} \Big( \mathop{\sum_{V \in \calD^m}}_{U \in \calD^k} \Big| \big\{ \bla f_j, h_V \otimes \frac{1_U}{|U|} \bra,
\bla g_j, \frac{1_V}{|V|} \otimes h_U \bra \big\}_E \Big|^2 \frac{1_V \otimes 1_U}{|V| |U|} \Big)^{1/2} \\
&\le \sum_{j \in \mathcal{J}} \iint_{\R^{m+k}} \Big( \mathop{\sum_{V \in \calD^m}}_{U \in \calD^k} \big\{ M_{\calD^k, E} \bla f_j, h_V \bra_1,
 M_{\calD^m, E^*} \bla g_j, h_U \bra_2 \big\}_E^2 \frac{1_V \otimes 1_U}{|V| |U|} \Big)^{1/2}.
\end{align*}
Notice that
\begin{align*}
&\Big( \mathop{\sum_{V \in \calD^m}}_{U \in \calD^k} \big\{ M_{\calD^k, E} \bla f_j, h_V \bra_1,
 M_{\calD^m, E^*} \bla g_j, h_U \bra_2 \big\}_E^2 \frac{1_V \otimes 1_U}{|V| |U|} \Big)^{1/2} \\
 &\le \Big( \sum_{V \in \calD^m} \Big\{ \frac{1_V}{|V|^{1/2}} \otimes M_{\calD^k, E} \bla f_j, h_V \bra_1, 
 \Big( \sum_{U \in \calD^k} \big[ M_{\calD^m, E^*} \bla g_j, h_U \bra_2 \big]^2 \otimes \frac{1_U}{|U|} \Big)^{1/2} \Big\}_E^2 \Big)^{1/2} \\
 &\le \Big\{ \Big( \sum_{V \in \calD^m} \frac{1_V}{|V|} \otimes \big[  M_{\calD^k, E} \bla f_j, h_V \bra_1 \big]^2 \Big)^{1/2}, 
  \Big( \sum_{U \in \calD^k} \big[ M_{\calD^m, E^*} \bla g_j, h_U \bra_2 \big]^2 \otimes \frac{1_U}{|U|} \Big)^{1/2} \Big\}_E,
\end{align*}
so that
\begin{align*}
\Big| \sum_{j \in \mathcal{J}}& \iint_{\R^{m+k}} \big\{ \Pi_{b_j}^{\textup{mixed}} f_j(x_1, x_2),  g_j(x_1,x_2)\big\}_E  \ud x_1 \ud x_2\Big| \\
&\lesssim \Big\|  \Big(  \sum_{j \in \mathcal{J}} \sum_{V \in \calD^m} \frac{1_V}{|V|} \otimes \big[  M_{\calD^k, E} \bla f_j, h_V \bra_1 \big]^2 \Big)^{1/2} \Big\|_{L^p(\R^{m}; L^r(\R^k;E))} \\
&\hspace{2cm}\times \Big\| \Big( \sum_{j \in \mathcal{J}}\sum_{U \in \calD^k} \big[ M_{\calD^m, E^*} \bla g_j, h_U \bra_2 \big]^2 \otimes \frac{1_U}{|U|}  \Big)^{1/2} \Big\|_{L^{p'}(\R^{m}; L^{r'}(\R^k;E^*))}.
\end{align*}

There holds
\begin{equation*}
\begin{split}
\Big\|  \Big(  \sum_{j \in \mathcal{J}} \sum_{V \in \calD^m} & \frac{1_V}{|V|} \otimes \big[  M_{\calD^k, E} \bla f_j, h_V \bra_1 \big]^2 \Big)^{1/2} \Big\|_{L^p(\R^{m}; L^r(\R^k;E))} \\
& \lesssim \Big\|  \Big(  \sum_{j \in \mathcal{J}} \sum_{V \in \calD^m} \frac{1_V}{|V|} \otimes \big[  \bla f_j, h_V \bra_1 \big]^2 \Big)^{1/2} \Big\|_{L^p(\R^{m}; L^r(\R^k;E))} \\
& \lesssim \Big\|  \Big(  \sum_{j \in \mathcal{J}}  | f_j|^2 \Big)^{1/2} \Big\|_{L^p(\R^{m}; L^r(\R^k;E))}.
\end{split}
\end{equation*}
In the first step we used Proposition \ref{prop:Rubio} (see again also the discussion after the proposition) in the inner integral over $\R^k$. 
The second step was an application of Corollary \ref{cor:biparmarSEQ}. 
The term related to the sequence $\{g_j\}$ is handled with a corresponding argument, except that the first step is now an application of Corollary \ref{cor:FS_M1}.
\end{proof}
\begin{rem}
The estimate in $L^r(\R^{k}; L^p(\R^m;E))$ follows with the same proof.
\end{rem}

\section{Bi-parameter partial paraproducts}
A bi-parameter partial paraproduct  is an operator of the form
$$
P_{\calD^n, \calD^m}^{i_1, i_2} f = \sum_{K \in \calD^n} \mathop{\sum_{I_1, I_2 \in \calD^n}}_{I_1^{(i_1)} = I_2^{(i_2)} = K} h_{I_2} \otimes
\pi_{\calD^m, b_{K, I_1,I_2}}(\langle f, h_{I_1} \rangle_1), \qquad f \in L^1_{\loc}(\R^{n+m};E),
$$
where $E$ is a UMD space with the property $(\alpha)$ of Pisier and
$\pi_{\calD^m, b_{K, I_1,I_2}}$ is a dyadic (one-parameter) paraproduct for some function
$$
b_{K, I_1,I_2} \colon \R^m \to \R
$$
satisfying
$$
\|b_{K, I_1,I_2}\|_{\BMO_{\calD^m}} \le \frac{|I_1|^{1/2} |I_2|^{1/2}}{|K|}.
$$

With $E = \R$ (or $\C$) this is the exact form in which these operators appear in the bi-parameter representation theorem \cite{Ma1}.
Of course, such operators appear also in the form that contains the dual paraproducts $\pi_{\calD^m, b_{K, I_1,I_2}}^*$, and in the form that the paraproduct component is in $\R^n$.
\begin{thm}\label{thm:biparPartialParaProd}
Let $E$ be a UMD function lattice, $p,q \in (1,\infty)$ and $i_1, i_2 \ge 0$. 
Suppose that for each $k \in \calK$ we are given a partial paraproduct
$$
P_{\calD^n, \calD^m, k}^{i_1, i_2} f = \sum_{K \in \calD^n} \mathop{\sum_{I_1, I_2 \in \calD^n}}_{I_1^{(i_1)} = I_2^{(i_2)} = K} h_{I_2} \otimes
\pi_{\calD^m, b_{K, I_1,I_2, k}}(\langle f, h_{I_1} \rangle_1), \qquad f \in L^1_{\loc}(\R^{n+m};E),
$$
where
$$
\|b_{K, I_1,I_2, k}\|_{\BMO_{\calD^m}} \le \frac{|I_1|^{1/2} |I_2|^{1/2}}{|K|}.
$$
Then we have
$$
\calR(\{ P_{\calD^n, \calD^m, k}^{i_1, i_2} \in \calL(L^q(\R^n; L^p(\R^m;E))) \colon k \in \calK\}) \lesssim \min(i_1, i_2) + 1
$$
and
$$
\calR(\{ P_{\calD^n, \calD^m, k}^{i_1, i_2} \in \calL(L^p(\R^m; L^q(\R^n; E))) \colon k \in \calK\}) \lesssim \min(i_1, i_2) + 1.
$$
\end{thm}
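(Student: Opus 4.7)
The plan is to recognize $P_{\calD^n, \calD^m, k}^{i_1, i_2}$ as a one-parameter operator-valued model operator from Section \ref{sec:model} with the operator-valued coefficients
$$
B_{K, I_1, I_2, k} := \pi_{\calD^m, b_{K, I_1, I_2, k}} \in \calL(L^p(\R^m; E)),
$$
and then to invoke Propositions \ref{prop:mod1-1} and \ref{prop:mod1-2}. The BMO hypothesis rescales cleanly: setting
$$
\tilde b_{K, I_1, I_2, k} := \frac{|K|}{|I_1|^{1/2} |I_2|^{1/2}} b_{K, I_1, I_2, k},
$$
we have $\|\tilde b_{K, I_1, I_2, k}\|_{\BMO_{\calD^m}} \le 1$ and $\frac{|K|}{|I_1|^{1/2}|I_2|^{1/2}} B_{K, I_1, I_2, k} = \pi_{\calD^m, \tilde b_{K, I_1, I_2, k}}$, so the $\calR$-boundedness assumption of the abstract model operator propositions reduces exactly to $\calR$-boundedness of a family of dyadic paraproducts with BMO-bounded symbols.

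For the bound in $L^q(\R^n; L^p(\R^m; E))$ I take $F = L^p(\R^m; E)$, which is UMD with Pisier's property $(\alpha)$ since $E$ is a UMD function lattice (hence has finite cotype and therefore $(\alpha)$) and both properties pass to $L^p$. Proposition \ref{prop:1parprodLattice} yields
$$
\calR\bigl( \{\pi_{\calD^m, \tilde b_{K, I_1, I_2, k}} \in \calL(L^p(\R^m; E))\}\bigr) \lesssim 1,
$$
so Proposition \ref{prop:mod1-1} delivers the desired estimate with constant $\lesssim \min(i_1, i_2) + 1$.

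For the bound in $L^p(\R^m; L^q(\R^n; E))$ I use Proposition \ref{prop:mod1-2} with the same $F = L^p(\R^m; E)$. This requires checking that each $\pi_{\calD^m, b}$ extends to an operator in $\calL(L^p(\R^m; L^q(\R^n \times Y^n; E)))$ and that the rescaled family is $\calR$-bounded there. The natural candidate is the vector-valued paraproduct $\pi_{\calD^m, b}$ acting pointwise in $(x_1, y) \in \R^n \times Y^n$ on functions valued in $L^q(\R^n \times Y^n; E)$; this agrees with the tensor extension defined in Section \ref{sec:biopshift} on simple functions of the form \eqref{eq:Simplef} because the paraproduct is a Haar expansion in $\R^m$ with scalar coefficients $\langle b, h_J\rangle$ and therefore commutes with the $(x_1, y)$-coordinates. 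Since $L^q(\R^n \times Y^n; E)$ is again a UMD function lattice (the function-lattice and UMD properties both pass to $L^q$), Proposition \ref{prop:1parprodLattice} applies with $E$ replaced by $L^q(\R^n \times Y^n; E)$ and gives the $\calR$-bound $\lesssim 1$; Proposition \ref{prop:mod1-2} then produces the claimed estimate.

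The main subtlety is the tensor-extension verification in the second case: one must confirm that the vector-valued paraproduct genuinely realises the tensor extension of Section \ref{sec:biopshift} on simple functions, and that Proposition \ref{prop:1parprodLattice} is applicable with the enlarged target lattice. Both are essentially bookkeeping, but they are the only steps that are not a direct invocation of the abstract machinery already developed in Sections \ref{sec:model} and \ref{sec:RbounbforPar}.
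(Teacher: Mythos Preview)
Your proposal is correct and follows essentially the same route as the paper's own proof: view $P_{\calD^n,\calD^m,k}^{i_1,i_2}$ as a model operator with coefficients $B_{K,I_1,I_2,k}=\pi_{\calD^m,b_{K,I_1,I_2,k}}$, apply Proposition~\ref{prop:mod1-1} together with Proposition~\ref{prop:1parprodLattice} for the $L^q(\R^n;L^p(\R^m;E))$ bound, and apply Proposition~\ref{prop:mod1-2} together with Proposition~\ref{prop:1parprodLattice} (with $E$ replaced by the UMD function lattice $L^q(\R^n\times Y^n;E)$) for the $L^p(\R^m;L^q(\R^n;E))$ bound. Your explicit remarks on the rescaling and on the tensor-extension compatibility are slightly more detailed than the paper's terse treatment, but the argument is the same.
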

\begin{proof}
Fix $p \in (1,\infty)$. We can see the partial paraproduct $P_{\calD^n, \calD^m,k}^{i_1, i_2}$, $k \in \calK$, as a model operator when it acts
on locally integrable functions $f \colon \R^n \to F$, where $F = L^p(\R^m; E)$. Proposition \ref{prop:mod1-1} says that for all $q \in (1,\infty)$ we have
$$
\calR(\{ P_{\calD^n, \calD^m, k}^{i_1, i_2} \in \calL(L^q(\R^n; L^p(\R^m;E))) \colon k \in \calK\}) \lesssim (\min(i_1, i_2) + 1) R_p(E),
$$
where
$$
R_p(E) := \calR\Big( \Big\{\frac{|K|}{|I_1|^{1/2} |I_2|^{1/2}} \pi_{\calD^m, b_{K, I_1,I_2, k}}  \in \calL(L^p(\R^m; E))\colon k \in \calK, K, I_1, I_2 \in \calD^n \Big\} \Big).
$$
Proposition \ref{prop:1parprodLattice} says that $R_p(E) \lesssim 1$, as $E$ is a UMD function lattice.

On the other hand, Proposition \ref{prop:mod1-2} says that for all $p,q \in (1,\infty)$ we have
$$
\calR(\{ P_{\calD^n, \calD^m, k}^{i_1, i_2} \in \calL(L^p(\R^m; L^q(\R^n; E))) \colon k \in \calK\}) \lesssim (\min(i_1, i_2) + 1) R_{p,q}(E),
$$
where
\begin{align*}
R_{p,q}(E) = 
\calR\Big(\Big\{ \frac{|K|}{|I_1|^{1/2} |I_2|^{1/2}} \pi_{\calD^m, b_{K, I_1,I_2, k}} \in \calL(L^p(\R^m&; L^q(\R^n \times Y^n; E)))\colon \\
& k \in \calK, K, I_1, I_2 \in \calD^n\Big\} \Big).
\end{align*}
Since $E$ is a UMD function lattice, we can apply Proposition \ref{prop:1parprodLattice} with $L^q(\R^n \times Y^n; E)$. This gives that $R_{p,q}(E) \lesssim 1$.
\end{proof}
\begin{rem}
We wrote the proof like we did to illustrate the following point.
Notice that even if we would be interested only in the case $E = \R$, the second part of the proof
would require applying Proposition \ref{prop:1parprodLattice} to the UMD function lattice $L^q(\R^n \times Y^n)$.
That is, we would need the $\calR$-boundedness of ordinary paraproducts in some vector-valued setting anyway! Of course,
in the $p =q$ case we could just use the easier proof i.e. the first part of the proof.
\end{rem}

\section{Application to bi-parameter singular integrals}
The definition of a bi-parameter singular integral $T$ is somewhat lengthy. We only give a brief idea here, for the full details see \cite{Ma1}.

The definition involves first of all the structural
assumption that $T$ should have a full kernel representation i.e. $\langle Tf, g \rangle$ can be written as
as integral operator with a kernel $K\colon (\R^{n+m} \times \R^{n+m}) \setminus \{(x,y) \in \R^{n+m}  \times \R^{n+m}:\, x_1 = y_1 \textup{ or } x_2 = y_2\} \to \R$, when $f = f_1 \otimes f_2$ and $g = g_1 \otimes g_2$ are of tensor product form with $\operatorname{spt}\,f_1\cap \operatorname{spt}\,g_1 = \emptyset$ and $ \operatorname{spt}\,f_2 \cap  \operatorname{spt}\,g_2 = \emptyset$. The kernel $K$ needs to satisfy various estimates: the size estimate, H\"older estimates, and 
mixed H\"older and size estimates. Some Calder\'on--Zygmund structure on $\R^n$ and $\R^m$ is also demanded separately. This entails
having regular enough kernel representations when only  $\operatorname{spt}\,f_1\cap \operatorname{spt}\,g_1 = \emptyset$ or $\operatorname{spt}\,f_2 \cap  \operatorname{spt}\,g_2 = \emptyset$.

Next, we demand various boundedness and cancellation assumptions: the weak boundedness assumption, some diagonal BMO conditions, and finally
that $T1, T^*1, T_1(1)$ and $T_1^*(1)$ belong to $\BMO_{\textup{prod}}(\R^{n+m})$. Here $T_1$ is the first partial adjoint of $T$ i.e.
$\langle T_1(f_1 \otimes f_2), g_1 \otimes g_2 \rangle = \langle T(g_1 \otimes f_2), f_1 \otimes g_2\rangle$.

With such assumptions it was shown in \cite{Ma1} that a specific dyadic representation of bi-parameter singular integrals holds.
For all bounded and compactly supported $f,g \colon \R^{n+m} \to \R$, we have
$$
\langle Tf, g\rangle = C_T \mathbb{E}_{w_n} \mathbb{E}_{w_m}\mathop{\sum_{(i_1, i_2) \in \Z_+^2}}_{(j_1, j_2) \in \Z_+^2} \alpha_{i_1,i_2,j_1,j_2} 
\sum_{u}
\langle S^{i_1,i_2,j_1,j_2}_{\mathcal{D}^n,\mathcal{D}^m, u}f, g \rangle,
$$
where $\alpha_{i_1, i_2, j_1, j_2} = 2^{-\max(i_1,i_2)\delta/2}2^{-\max(j_1,j_2)\delta/2}$ (the $\delta$ appears on the H\"older estimates of the kernels)
and $u$ runs over some finitely many integers. Here
$S^{i_1,i_2,j_1,j_2}_{\calD^n,\calD^m, u}$ is a bi-parameter shift  if $(i_1, i_2) \ne (0,0)$ and $(j_1, j_2) \ne (0,0)$.
More specifically, we mean a cancellative bi-parameter shift (such as we have defined in this paper even in the operator-valued setting) with the scalar-valued
kernels being uniformly bounded by $1$.
In the other remaining cases $S^{i_1,i_2,j_1,j_2}_{\calD^n,\calD^m, u}$ can either be
a bi-parameter shift, some partial paraproduct
with the paraproduct component in $\R^n$ or $\R^m$ (BMO bounds normalised like $\le (|I_1|^{1/2} |I_2|^{1/2}) / |K|$ 
or $(|J_1|^{1/2} |J_2|^{1/2})/|V|$)),
or a full bi-parameter paraproduct, either of standard type or of mixed type,
associated with a product BMO function of norm at most 1. We have one full standard paraproduct, one adjoint of such, one mixed paraproduct and an adjoint of such, and
these appear in the case $(i_1, i_2, j_1, j_2) = (0,0,0,0)$.
Moreover, the average is taken over all random dyadic grids $\calD^n = \calD^n(w_n)$
and $\calD^m = \calD^m(w_m)$. 

If $E$ is a UMD space with the property $(\alpha)$ of Pisier, we can apply this to
simple functions $f = \sum_{a=1}^A f_a e_a$ and $g = \sum_{b=1}^B g_b e^*_b$, where
$f_a \colon \R^{n+m} \to \R$ and $g_b \colon \R^{n+m} \to \R$ are bounded and compactly supported,
$e_a \in E$ and $e^*_b \in E^*$. This gives that
\begin{align*}
&\iint_{\R^{n+m}} \{ Tf(x), g(x)\}_E\ud x \\
&=  C_T \mathbb{E}_{w_n} \mathbb{E}_{w_m}\mathop{\sum_{(i_1, i_2) \in \Z_+^2}}_{(j_1, j_2) \in \Z_+^2} \alpha_{i_1,i_2,j_1,j_2} \sum_u
\iint_{\R^{n+m}} \{ S^{i_1,i_2,j_1,j_2}_{\mathcal{D}^n,\mathcal{D}^m, u}f(x), g(x)\}_E\ud x.
\end{align*}
Here the interpretation is clear: $Tf(x_1,x_2) = \sum_{a=1}^A Tf_a(x_1,x_2) e_a$ (and the same with the shifts).
\begin{thm}\label{thm:boundednessBiParSIORbound}
Suppose $\calI$ is a index set, and that for each $i \in \calI$ we are given a bi-parameter SIO as in \cite{Ma1}. Suppose that the H\"older exponents
of the kernels of these operators are uniformly bounded from below, and that all the other constants in the assumptions are uniformly bounded from above.
Let $E$ be a UMD function lattice, and $p,q \in (1,\infty)$.
Then we have
$$
\calR(\{ T_i \in \calL(L^q(\R^n; L^p(\R^m;E))) \colon i \in \calI\}) \lesssim 1.
$$
\end{thm}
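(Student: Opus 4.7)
The plan is to apply the dyadic representation theorem from \cite{Ma1} to each $T_i$ simultaneously, and then extract the uniform $\calR$-boundedness from the already-proved $\calR$-boundedness of the building blocks: bi-parameter operator-valued shifts (Proposition \ref{prop:biparOP}), partial paraproducts (Theorem \ref{thm:biparPartialParaProd}), and full paraproducts (Propositions \ref{prop:2parprodstan} and \ref{prop:2parprodmixed}). The rapid decay of the coefficients $\alpha_{i_1,i_2,j_1,j_2}=2^{-\max(i_1,i_2)\delta/2}2^{-\max(j_1,j_2)\delta/2}$ will overwhelm the polynomial factors $(\min(i_1,i_2)+1)(\min(j_1,j_2)+1)$ coming from those bounds.

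First, fix a tuple $(i_1,i_2,j_1,j_2)$, one of the finitely many position indices $u$, and random grids $w_n,w_m$, and consider the family
$$
\calS_{i_1,i_2,j_1,j_2,u,w_n,w_m} := \{S^{i_1,i_2,j_1,j_2}_{\calD^n(w_n),\calD^m(w_m),u,i} \colon i \in \calI\}.
$$
Each member is either a cancellative bi-parameter shift with a scalar kernel of size $\le 1$, a partial paraproduct with the one-parameter BMO normalisation described in \cite{Ma1}, or a full paraproduct of standard or mixed type associated with a product BMO function of norm $\le 1$. By Proposition \ref{prop:biparOP}, Theorem \ref{thm:biparPartialParaProd}, and Propositions \ref{prop:2parprodstan}--\ref{prop:2parprodmixed}, the family $\calS_{i_1,i_2,j_1,j_2,u,w_n,w_m}$ is $\calR$-bounded in $\calL(L^q(\R^n;L^p(\R^m;E)))$ with an estimate $\lesssim (\min(i_1,i_2)+1)(\min(j_1,j_2)+1)$, uniformly in $u$, $w_n$, $w_m$, with the polynomial factor being $1$ for full paraproducts (only present when $(i_1,i_2,j_1,j_2)=(0,0,0,0)$).

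Next I would remove the randomisation over grids. If $\{R_{w,i}\}_{w,i}$ is $\calR$-bounded with constant $C$ uniformly in $w$ and we set $T_i := \mathbb{E}_w R_{w,i}$, then given $T_{i_1},\dots,T_{i_N}$ we introduce independent copies $w^{(1)},\dots,w^{(N)}$ and use Jensen's inequality to get
$$
\mathbb{E}_\epsilon \Big\| \sum_{n=1}^N \epsilon_n T_{i_n} f_n\Big\|^2 \le \mathbb{E}_\epsilon \mathbb{E}_{w^{(1)},\dots,w^{(N)}} \Big\| \sum_{n=1}^N \epsilon_n R_{w^{(n)},i_n} f_n\Big\|^2 \le C^2 \,\mathbb{E}_\epsilon\Big\|\sum_{n=1}^N \epsilon_n f_n\Big\|^2,
$$
so averaging preserves the $\calR$-bound. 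Applying this to $\mathbb{E}_{w_n}\mathbb{E}_{w_m}$, and noting that the finite sum over $u$ merely multiplies the bound by a constant, the family
$$
\mathcal{T}_{i_1,i_2,j_1,j_2} := \Big\{ \mathbb{E}_{w_n}\mathbb{E}_{w_m}\sum_u S^{i_1,i_2,j_1,j_2}_{\calD^n(w_n),\calD^m(w_m),u,i} \colon i \in \calI\Big\}
$$
is $\calR$-bounded with constant $\lesssim (\min(i_1,i_2)+1)(\min(j_1,j_2)+1)$. Finally, the representation gives $T_i = C_T\sum_{(i_1,i_2),(j_1,j_2)}\alpha_{i_1,i_2,j_1,j_2}\mathcal{T}_{i_1,i_2,j_1,j_2,i}$, and since
$$
\sum_{(i_1,i_2),(j_1,j_2)\in\Z_+^2} 2^{-\max(i_1,i_2)\delta/2}2^{-\max(j_1,j_2)\delta/2}(\min(i_1,i_2)+1)(\min(j_1,j_2)+1) < \infty,
$$
the standard fact that a weighted sum of $\calR$-bounded families is $\calR$-bounded with constant equal to the sum of the weighted constants completes the proof.

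The main subtlety, rather than any single hard estimate, is bookkeeping: one must verify that the representation theorem of \cite{Ma1}, originally stated for scalar $f,g$, extends to the bilinear form $\iint \{T_i f,g\}_E$ for simple $E$- and $E^*$-valued functions (tensor extension, as already sketched in the paper right before the theorem), and then pass from this bilinear bound to the $\calR$-boundedness in $L^q(\R^n;L^p(\R^m;E))$ via Lemma \ref{lem:randomduality} together with Corollary \ref{cor:biparmarSEQ} (which provides the square-function duality needed because $E$ and $E^*$ are both UMD function lattices). Once this is set up, absolutely nothing beyond the three $\calR$-boundedness results above and the geometric summability of $\alpha_{i_1,i_2,j_1,j_2}$ is used.
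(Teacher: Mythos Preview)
Your overall strategy---representation theorem, $\calR$-boundedness of the model operators, summability of $\alpha_{i_1,i_2,j_1,j_2}$ against the polynomial growth---is exactly the paper's, and the ingredients you cite are the right ones. There is, however, a genuine slip in the step where you remove the grid average. You introduce \emph{independent} copies $w^{(1)},\dots,w^{(N)}$ and then need
\[
\E_\epsilon\Big\|\sum_{n=1}^N \epsilon_n R_{w^{(n)},i_n} f_n\Big\|^2 \le C^2\,\E_\epsilon\Big\|\sum_{n=1}^N \epsilon_n f_n\Big\|^2
\]
for almost every fixed tuple $(w^{(1)},\dots,w^{(N)})$. This is the $\calR$-bound for a finite set of model operators living on \emph{different} dyadic grids, which is strictly more than what Proposition~\ref{prop:biparOP}, Theorem~\ref{thm:biparPartialParaProd}, and Propositions~\ref{prop:2parprodstan}--\ref{prop:2parprodmixed} provide (they are all stated and proved for a fixed pair $\calD^n,\calD^m$). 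Uniform $\calR$-boundedness of $\{R_{w,i}\}_i$ in $w$ does not in general imply $\calR$-boundedness of $\{R_{w,i}\}_{w,i}$.

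The fix is simply not to take independent copies: write $\sum_n \epsilon_n T_{i_n} f_n = \E_w\sum_n \epsilon_n R_{w,i_n} f_n$ with the \emph{same} $w$ throughout, apply Jensen, and then for each fixed $w$ invoke the $\calR$-bound on that single grid. This is exactly what the paper does, only phrased through the bilinear form: it dualises via Lemma~\ref{lem:randomduality}, expands each $T_k$ by the representation, pushes $\sum_{k\in\calK}$ inside the grid expectation and the $(i_1,i_2,j_1,j_2,u)$-sum, and then for each fixed $(w_n,w_m,i_1,i_2,j_1,j_2,u)$ bounds $\big|\sum_k \iint\{S^{\dots}_{k} f_k,g_k\}_E\big|$ by the $\calR$-bound of the model operators on that fixed grid. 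Corollary~\ref{cor:biparmarSEQ} is not needed here; Lemma~\ref{lem:randomduality} alone handles the passage between the random sum and the bilinear form.
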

\begin{proof}
Fix a finite $\calK \subset \calI$ and simple functions $f_k \colon \R^{n+m} \to E$, $k \in \calK$. We need to prove that
$$
\Big(\E \Big\| \sum_{k \in \calK} \epsilon_k T_k f_k \Big\|^2_{L^q(\R^n; L^p(\R^m;E))}\Big)^{1/2} \lesssim
 \Big(\E \Big\| \sum_{k \in \calK} \epsilon_k f_k \Big\|^2_{L^q(\R^n; L^p(\R^m;E))}\Big)^{1/2}.
$$
Lemma \ref{lem:randomduality} says that the right hand side is dominated by
$$
\sup \Big| \sum_{k \in \calK} \iint_{\R^{n+m}} \{ T_kf_k(x), g_k(x)\}_E\ud x \Big|,
$$
where the supremum is taken over those simple $g_k \colon \R^{n+m} \to E^*$ for which
$$
\Big(\E \Big\| \sum_{k \in \calK} \epsilon_k g_k \Big\|^2_{L^{q'}(\R^n; L^{p'}(\R^m;E^*))}\Big)^{1/2} \le 1.
$$
With any fixed such sequence $(g_k)_{k \in \calK}$ we write
\begin{align*}
&\Big| \sum_{k \in \calK} \iint_{\R^{n+m}} \{ T_kf_k(x), g_k(x)\}_E\ud x \Big| \\
&= \Big| \sum_{k \in \calK} C \mathbb{E}_{w_n} \mathbb{E}_{w_m}\mathop{\sum_{(i_1, i_2) \in \Z_+^2}}_{(j_1, j_2) \in \Z_+^2} \alpha_{i_1,i_2,j_1,j_2} \sum_u
\iint_{\R^{n+m}} \{ S^{i_1,i_2,j_1,j_2}_{\calD^n,\calD^m, k, u}f_k(x), g_k(x)\}_E\ud x \Big| \\
&\le C  \mathbb{E}_{w_n} \mathbb{E}_{w_m}\mathop{\sum_{(i_1, i_2) \in \Z_+^2}}_{(j_1, j_2) \in \Z_+^2} \alpha_{i_1,i_2,j_1,j_2} \sum_u
\Big| \sum_{k \in \calK} \iint_{\R^{n+m}} \{ S^{i_1,i_2,j_1,j_2}_{\calD^n,\calD^m, k, u}f_k(x), g_k(x)\}_E\ud x \Big|.
\end{align*}
We have
\begin{align*}
\Big| &\sum_{k \in \calK} \iint_{\R^{n+m}} \{ S^{i_1,i_2,j_1,j_2}_{\calD^n,\calD^m, k, u}f_k(x), g_k(x)\}_E\ud x \Big|\\
&= \Big| \E  \iint_{\R^{n+m}} \Big\{ \sum_{k \in \calK} \epsilon_k S^{i_1,i_2,j_1,j_2}_{\calD^n,\calD^m, k, u}f_k(x), \sum_{k' \in \calK} \epsilon_{k'}
g_{k'}(x)\Big\}_E\ud x \Big| \\
&\le \Big(\E \Big\| \sum_{k \in \calK} \epsilon_k S^{i_1,i_2,j_1,j_2}_{\calD^n,\calD^m, k, u} f_k \Big\|^2_{L^q(\R^n; L^p(\R^m;E))}\Big)^{1/2}.
\end{align*}
If for the fixed $i_1, i_2, j_1, j_2$ and $u$
the appearing operators $S^{i_1,i_2,j_1,j_2}_{\calD^n,\calD^m, k, u}$, $k \in \calK$, are bi-parameter shifts,
Proposition \ref{prop:biparOP} (a special case where the kernels are scalar-valued and pointwise uniformly bounded by $1$) gives
\begin{align*}
\Big(\E \Big\| \sum_{k \in \calK} \epsilon_k& S^{i_1,i_2,j_1,j_2}_{\calD^n,\calD^m, k, u} f_k \Big\|^2_{L^q(\R^n; L^p(\R^m;E))}\Big)^{1/2} \\
&\lesssim (\min(i_1, i_2)+1)(\min(j_1, j_2)+1) \Big(\E \Big\| \sum_{k \in \calK} \epsilon_k f_k \Big\|^2_{L^q(\R^n; L^p(\R^m;E))}\Big)^{1/2}.
\end{align*}
If they are partial paraproducts, then Theorem \ref{thm:biparPartialParaProd} gives the same bound. Notice that even if we have here arbitrarily chosen the symmetry $L^q(\R^n; L^p(\R^m;E))$ (and not $L^p(\R^m; L^q(\R^n;E))$), we always also need the second inequality of Theorem \ref{thm:biparPartialParaProd} here (to handle the case $S^{0, 0, j_1,j_2}_{\calD^n, \calD^m,k, u}$ where the paraproduct component is $\R^n$). Finally, Propositions \ref{prop:2parprodstan} and \ref{prop:2parprodmixed} give the same bound in the case that the operators $S^{0, 0, 0,0}_{\calD^n, \calD^m, k, u}$, $k \in \calK$, are full paraproducts of the same type. The proof is complete.
\end{proof}

\section{Tri-parameter partial paraproducts}
We conclude the paper by studying boundedness properties of tri-parameter partial paraproducts. They come in essentially two different forms, here called type 1 and type 2.
The proved estimates are a key step in proving that $T1$ type assumptions for tri-parameter singular integrals
directly imply that tri-parameter singular integrals are $L^q(\R^n; L^p(\R^m; L^r(\R^k;E)))$ bounded for all $p, q, r \in (1,\infty)$ and UMD function lattices $E$, that is, for
proving the analog of Theorem \ref{thm:boundednessBiParSIORbound} for tri-parameter singular integrals.
In addition to these partial paraproducts one would also need to consider the full tri-parameter paraproducts of different flavours and tri-parameter shifts. The shifts
would be straightforward to handle, and we believe that the full paraproducts should not pose problems either.
\subsection{Tri-parameter partial paraproducts of type 1}
A tri-parameter partial paraproduct of type 1 is an operator of the form
$$
P^{i_1, i_2}_{\calD^n, \calD^m, \calD^k} f = \sum_{K \in \calD^n} \mathop{\sum_{I_1, I_2 \in \calD^n}}_{I_1^{(i_1)} = I_2^{(i_2)} = K} h_{I_2} \otimes B_{\calD^m, \calD^k, K, I_1, I_2}(\langle f, h_{I_1} \rangle_1),
$$
where $f \in L^1_{\loc}(\R^{n+m+k};E)$, $E$ is a UMD space satisfying Pisier's property $(\alpha)$,
and $B_{\calD^m, \calD^k,K, I_1, I_2}$ is
a standard full bi-parameter paraproduct $\Pi_{\calD^m, \calD^k, b_{K, I_1,I_2}}$  for all $K, I_1, I_2$,
or a mixed full bi-parameter paraproduct $\Pi^{\textup{mixed}}_{\calD^m, \calD^k, b_{K, I_1,I_2}}$ for all $K, I_1, I_2$.
The functions
$$
b_{K, I_1,I_2} \colon \R^{m+k} \to \R
$$
satisfy
$$
\|b_{K, I_1,I_2}\|_{\BMO_{\textup{prod}}^{\calD^m, \calD^k}} \le \frac{|I_1|^{1/2} |I_2|^{1/2}}{|K|}.
$$
\begin{prop}
Let $E$ be a UMD function lattice, $p,q, r \in (1,\infty)$ and $i_1, i_2 \ge 0$. 
Suppose that for each $s \in \calS$ we are given a tri-parameter partial paraproduct of type 1
$$
P^{i_1, i_2}_{\calD^n, \calD^m, \calD^k, s} f = \sum_{K \in \calD^n} \mathop{\sum_{I_1, I_2 \in \calD^n}}_{I_1^{(i_1)} = I_2^{(i_2)} = K} h_{I_2} \otimes B_{\calD^m, \calD^k, K, I_1, I_2, s}(\langle f, h_{I_1} \rangle_1),
$$
where either $B_{\calD^m, \calD^k, K, I_1, I_2, s} = \Pi_{\calD^m, \calD^k, b_{K, I_1,I_2, s}}$ for all $s \in \calS$ and $K, I_1, I_2 \in \calD^n$
or $B_{\calD^m, \calD^k, K, I_1, I_2, s} = \Pi^{\textup{mixed}}_{\calD^m, \calD^k, b_{K, I_1,I_2,s}}$ for all $s \in \calS$ and $K, I_1, I_2 \in \calD^n$.
Moreover, assume that
$$
\|b_{K, I_1,I_2,s}\|_{\BMO_{\textup{prod}}^{\calD^m, \calD^k}} \le \frac{|I_1|^{1/2} |I_2|^{1/2}}{|K|}.
$$
Then we have
$$
\calR(\{ P^{i_1, i_2}_{\calD^n, \calD^m, \calD^k, s} \in \calL(L^q(\R^n; L^p(\R^{m}; L^r(\R^k;E)))) \colon s \in \calS\}) \lesssim \min(i_1, i_2) + 1,
$$
and the same is true with all the other permutations of $L^q$, $L^p$ and $L^r$.
\end{prop}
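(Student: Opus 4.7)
The plan is to view each partial paraproduct $P^{i_1,i_2}_{\calD^n,\calD^m,\calD^k,s}$ as a model operator of the form treated in Section \ref{sec:model}, acting in the $\calD^n$ variable with operator-valued coefficients $B_{K,I_1,I_2,s} := B_{\calD^m,\calD^k,K,I_1,I_2,s}$. That is, identifying a function on $\R^{n+m+k}$ with an $F$-valued function on $\R^n$ (for $F$ a suitable mixed-norm lattice over $\R^{m+k}$), we have $P^{i_1,i_2}_{\calD^n,\calD^m,\calD^k,s} = P^{i_1,i_2}_{\calD^n,s}$ in the sense of Section \ref{sec:model}. The key observation is that the normalized coefficients
\[
\frac{|K|}{|I_1|^{1/2}|I_2|^{1/2}} B_{K,I_1,I_2,s}
\]
are themselves bi-parameter paraproducts of standard or mixed type, associated with the rescaled symbols $\tilde b_{K,I_1,I_2,s} := \frac{|K|}{|I_1|^{1/2}|I_2|^{1/2}} b_{K,I_1,I_2,s}$, which satisfy $\|\tilde b_{K,I_1,I_2,s}\|_{\BMO^{\calD^m,\calD^k}_{\textup{prod}}} \le 1$ uniformly in $K,I_1,I_2,s$.

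For the norm $L^q(\R^n;L^p(\R^m;L^r(\R^k;E)))$, I take $F = L^p(\R^m;L^r(\R^k;E))$ and apply Proposition \ref{prop:mod1-1}. The required $\calR$-boundedness of the family of normalized coefficients on $F$ is supplied directly by Propositions \ref{prop:2parprodstan} and \ref{prop:2parprodmixed} (applied to the rescaled symbols). For the norm $L^p(\R^m;L^q(\R^n;L^r(\R^k;E)))$, I take $F = L^p(\R^m;L^r(\R^k;E))$ and apply Proposition \ref{prop:mod1-2}; the hypothesis there asks for $\calR$-boundedness of the $\Pi$'s on $L^p(\R^m;L^r(\R^k;L^q(\R^n \times Y^n;E)))$. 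Since $L^q(\R^n \times Y^n;E)$ is a UMD function lattice whenever $E$ is (by iterated $L^q$-constructions), this is again exactly what Propositions \ref{prop:2parprodstan}/\ref{prop:2parprodmixed} provide with $E$ replaced by $L^q(\R^n \times Y^n;E)$. For the norm $L^p(\R^m;L^r(\R^k;L^q(\R^n;E)))$, the same rewriting works with Proposition \ref{prop:mod1-3} in place of Proposition \ref{prop:mod1-2}.

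This handles three of the six permutations. The remaining three are obtained by interchanging the roles of $(p,\R^m)$ and $(r,\R^k)$ throughout: one replaces $F$ by $L^r(\R^k;L^p(\R^m;E))$ (respectively $L^r(\R^k;L^p(\R^m;L^q(\R^n \times Y^n;E)))$) and invokes the Remark stated just after Proposition \ref{prop:2parprodmixed}, which guarantees the $\calR$-boundedness of both standard and mixed bi-parameter paraproducts on this flipped mixed-norm lattice. With these $\calR$-boundedness inputs in place, Propositions \ref{prop:mod1-1}, \ref{prop:mod1-2}, \ref{prop:mod1-3} each yield the bound with the constant $\min(i_1,i_2)+1$.

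There is no real obstacle in this argument: the entire point is that the structural framework set up in Section \ref{sec:model} together with the $\calR$-boundedness of bi-parameter paraproducts from Section \ref{sec:RbounbforPar} fit together neatly, provided one correctly matches each permutation to the right model-operator proposition. The only place requiring care is the bookkeeping for Propositions \ref{prop:mod1-2} and \ref{prop:mod1-3}: one must verify that the bi-parameter paraproducts genuinely admit the required tensor extension to the decoupled space $L^p(\R^m;L^r(\R^k;L^q(\R^n \times Y^n;E)))$ (or its flipped analogue), and one must use the stability of the UMD function lattice property under the $L^q(\R^n \times Y^n;\,\cdot\,)$ construction to reapply Propositions \ref{prop:2parprodstan}/\ref{prop:2parprodmixed}. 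Once this is checked, the conclusion follows uniformly in $s \in \calS$ with the advertised constant.
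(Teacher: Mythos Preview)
Your overall strategy matches the paper's, and the cases $L^q(\R^n;L^p(\R^m;L^r(\R^k;E)))$ and $L^p(\R^m;L^r(\R^k;L^q(\R^n;E)))$ (together with their $(p,\R^m)\leftrightarrow(r,\R^k)$ flips) are handled correctly. However, there is a genuine gap in your treatment of the permutation $L^p(\R^m;L^q(\R^n;L^r(\R^k;E)))$.

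You write that the hypothesis of Proposition~\ref{prop:mod1-2} asks for $\calR$-boundedness of the $\Pi$'s on $L^p(\R^m;L^r(\R^k;L^q(\R^n\times Y^n;E)))$, and then verify this by substituting $E\mapsto L^q(\R^n\times Y^n;E)$ in Propositions~\ref{prop:2parprodstan}/\ref{prop:2parprodmixed}. But this is not the space Proposition~\ref{prop:mod1-2} demands. With $F=L^p(\R^m;\tilde E)$ and $\tilde E=L^r(\R^k;E)$, the hypothesis is $\calR$-boundedness on
\[
\calL\big(L^p(\R^m;L^q(\R^n\times Y^n;\tilde E))\big)=\calL\big(L^p(\R^m;L^q(\R^n\times Y^n;L^r(\R^k;E)))\big),
\]
i.e.\ the decoupling space $L^q(\R^n\times Y^n;\cdot)$ sits \emph{between} the two paraproduct directions $\R^m$ and $\R^k$, not inside $L^r(\R^k;\cdot)$. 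No substitution of the form $E\mapsto E'$ in Propositions~\ref{prop:2parprodstan}/\ref{prop:2parprodmixed} produces this sandwiched mixed norm, because those propositions only yield $\calR$-boundedness on $L^p(\R^m;L^r(\R^k;E'))$ (or the flipped version). The paper addresses exactly this point: one has to rerun the proofs of Propositions~\ref{prop:2parprodstan} and \ref{prop:2parprodmixed} on a space of the form $L^p(\R^m;L^q(X;L^r(\R^k;E)))$ with an inert measure space $X$ in the middle, checking that the relevant square function and maximal function bounds (Corollaries~\ref{cor:biparmarSEQ}, \ref{cor:FS_M1}, \ref{cor:FS_Strong}) still go through with this extra layer. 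This is not hard, but it is not a black-box application of the stated propositions; your proposal skips over it by misidentifying the target space.
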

\begin{proof}
We view $P^{i_1, i_2}_{\calD^n, \calD^m, \calD^k, s}$
as a model operator acting
on locally integrable functions $f \colon \R^n \to F$, where $F = L^p(\R^{m}; L^r(\R^k; E))$.
Proposition \ref{prop:mod1-1} says that
\begin{align*}
\calR(\{ P^{i_1, i_2}_{\calD^n, \calD^m, \calD^k, s} \in \calL(L^q(\R^n; L^p(\R^{m}; &L^r(\R^k;E)))) \colon s \in \calS\}) \\
&\lesssim (\min(i_1, i_2) + 1) R_{p,r}(E),
\end{align*}
where $R_{p,r}(E)$ is the constant
\begin{align*}
 \calR\Big( \Big\{\frac{|K|}{|I_1|^{1/2} |I_2|^{1/2}} B_{\calD^m, \calD^k,K, I_1, I_2,s} 
 \in \calL(L^p(\R^{m}; L^r(\R^k; E)))\colon s \in \calS, K, I_1, I_2 \in \calD^n \Big\} \Big).
\end{align*}
Propositions \ref{prop:2parprodstan} and \ref{prop:2parprodmixed} say that $R_{p,r}(E) \lesssim 1$.

Proposition \ref{prop:mod1-2} says that
\begin{align*}
\calR(\{ P^{i_1, i_2}_{\calD^n, \calD^m, \calD^k, s} \in \calL(L^p(\R^m; L^q(\R^{n}; &L^r(\R^k;E)))) \colon s \in \calS\}) \\
&\lesssim (\min(i_1, i_2) + 1) R_{p,q,r}^1(E),
\end{align*}
where $R_{p,q,r}^1(E)$ is the constant
\begin{align*}
 \calR\Big( \Big\{\frac{|K|}{|I_1|^{1/2} |I_2|^{1/2}} B_{\calD^m, \calD^k,K, I_1, I_2,s} 
 \in \calL(L^p(\R^{m}; L^q(\R^n \times& Y^n; L^r(\R^k; E))))\\
 &\colon s \in \calS, K, I_1, I_2 \in \calD^n \Big\} \Big).
\end{align*}
To bound this constant we need a bit strange versions of Propositions \ref{prop:2parprodstan} and \ref{prop:2parprodmixed}.
So we need to check the $\calR$-boundedness of the extensions of these full bi-parameter paraproducts in a space of the form $L^p(\R^{m}; L^q(X; L^r(\R^k; E)))$, where
$X$ is some measure space. Let $f_j = \sum_{a \in \mathcal{A}_j} 1_{X_{j,a}} F_{j,a}$, where $F_{j,a} \colon \R^{m+k} \to E$ and $X_{j,a} \subset X$, and
$g_j =  \sum_{c \in \mathcal{C}_j} 1_{\tilde X_{j,c}} G_{j,c}$, where $G_{j,c }  \colon \R^{m+k} \to E^*$ and $\tilde X_{j,c} \subset X$.
Following the proofs of the said propositions one ends up with the need to show that
$$
\Big\| \Big( \sum_j \big[  \calM_{\calD^m, \calD^k, E}^{1,3} f_j \big]^2 \Big)^{1/2} \Big\|_{L^p(\R^{m}; L^q(X; L^r(\R^k; E)))}
$$
and
$$
\Big\| \Big( \sum_j 
 \mathop{\sum_{V \in \calD^m}}_{U \in \calD^k} \big|\bla g_j, h_V \otimes h_U \bra_{1,3}\big|^2 \frac{1_V\otimes1_U}{|V| |U|} \Big)^{1/2}
  \Big\|_{L^{p'}(\R^{m}; L^{q'}(X; L^{r'}(\R^k; E^*)))}
$$
are bounded by
$$
\Big\| \Big( \sum_j |f_j|^2 \Big)^{1/2} \Big\|_{L^p(\R^{m}; L^q(X; L^r(\R^k; E)))} \,\,\,\, \textup{and} \,\,\,\, \Big\| \Big( \sum_j |g_j|^2 \Big)^{1/2} \Big\|_{L^{p'}(\R^{m}; L^{q'}(X; L^{r'}(\R^k; E^*)))}
$$
respectively, and the same with
$$
\Big\| \Big( \sum_j  
 \sum_{V \in \calD^m}  \Big[  M_{\calD^k, E}^3 \Big( \frac{1_V}{|V|^{1/2}} \otimes \bla f_{j}, h_V \bra_1\Big)\Big]^2 \Big)^{1/2} \Big\|_{L^p(\R^{m}; L^q(X; L^r(\R^k; E)))}
$$
 and
$$
\Big\| \Big( \sum_j  
 \sum_{U \in \calD^k}  \Big[  M_{\calD^m, E^*}^1 \Big(  \bla g_{j}, h_U \bra_3 \otimes \frac{1_U}{|U|^{1/2}} \Big)\Big]^2 \Big)^{1/2} \Big\|_{L^{p'}(\R^{m}; L^{q'}(X; L^{r'}(\R^k; E^*)))}.
$$
But these are all bounded essentially in the same way as in the original propositions. Therefore, we have $R_{p,q,r}^1(E) \lesssim 1$.

Proposition \ref{prop:mod1-3} says that
\begin{align*}
\calR(\{ P^{i_1, i_2}_{\calD^n, \calD^m, \calD^k, s} \in \calL(L^p(\R^m; L^r(\R^{k}; &L^q(\R^n;E)))) \colon s \in \calS\}) \\
&\lesssim (\min(i_1, i_2) + 1) R_{p,q,r}^2(E),
\end{align*}
where $R_{p,q,r}^2(E)$ is the constant
\begin{align*}
 \calR\Big( \Big\{\frac{|K|}{|I_1|^{1/2} |I_2|^{1/2}} B_{\calD^m, \calD^k,K, I_1, I_2,s} 
 \in \calL(L^p(\R^{m}; L^r(\R^k&; L^p(\R^n \times Y^n; E))))\\
 &\colon s \in \calS, K, I_1, I_2 \in \calD^n \Big\} \Big).
\end{align*}
That $R_{p,q,r}^2(E) \lesssim 1$ follows from Propositions \ref{prop:2parprodstan} and \ref{prop:2parprodmixed} applied
with $E$ replaced by $L^q(\R^n \times Y^n;E)$.

We can do all of the above but just starting with $F = L^r(\R^{k}; L^p(\R^m; E))$, so all of the symmetries follow.
\end{proof}

\subsection{Tri-parameter partial paraproducts of type 2}
A tri-parameter partial paraproduct of type 2 is an operator of the form
\begin{align*}
&P^{i_1, i_2, j_1, j_2}_{\calD^n, \calD^m, \calD^k} f \\ &= \sum_{K \in \calD^n} \sum_{V \in \calD^m}
 \mathop{\sum_{I_1, I_2 \in \calD^n}}_{I_1^{(i_1)} = I_2^{(i_2)} = K}
 \mathop{\sum_{J_1, J_2 \in \calD^m}}_{J_1^{(j_1)} = J_2^{(j_2)} = V}
  h_{I_2} \otimes h_{J_2} \otimes  \pi_{\calD^k, b_{K, V, I_1,I_2, J_1, J_2}}(\langle f, h_{I_1} \otimes h_{J_1} \rangle_{1,2}),
\end{align*}
where $f \in L^1_{\loc}(\R^{n+m+k};E)$, $E$ is a UMD space satisfying Pisier's property $(\alpha)$,
and $\pi_{\calD^k, b_{K, V, I_1,I_2, J_1, J_2}}$ is a dyadic  paraproduct 
for some function
$$
b_{K, V, I_1,I_2, J_1, J_2} \colon \R^k \to \R
$$
satisfying
$$
\|b_{K, V, I_1,I_2, J_1, J_2}\|_{\BMO_{\calD^k}} \le \frac{|I_1|^{1/2} |I_2|^{1/2}}{|K|} \frac{|J_1|^{1/2} |J_2|^{1/2}}{|V|}.
$$
\begin{prop}
Let $E$ be a UMD function lattice, $p,q, r \in (1,\infty)$ and $i_1, i_2, j_1, j_2 \ge 0$.
Suppose that for each $s \in \calS$ we are given a tri-parameter partial paraproduct of type 2
\begin{align*}
&P^{i_1, i_2, j_1, j_2}_{\calD^n, \calD^m, \calD^k, s} f \\ &= \sum_{K \in \calD^n} \sum_{V \in \calD^m}
 \mathop{\sum_{I_1, I_2 \in \calD^n}}_{I_1^{(i_1)} = I_2^{(i_2)} = K}
 \mathop{\sum_{J_1, J_2 \in \calD^m}}_{J_1^{(j_1)} = J_2^{(j_2)} = V}
  h_{I_2} \otimes h_{J_2} \otimes  \pi_{\calD^k, b_{K, V, I_1,I_2, J_1, J_2, s}}(\langle f, h_{I_1} \otimes h_{J_1} \rangle_{1,2}),
\end{align*}
where
$$
\|b_{K, V, I_1,I_2, J_1, J_2, s}\|_{\BMO_{\calD^k}} \le \frac{|I_1|^{1/2} |I_2|^{1/2}}{|K|} \frac{|J_1|^{1/2} |J_2|^{1/2}}{|V|}.
$$
Then we have
\begin{align*}
\calR(\{ P^{i_1, i_2, j_1, j_2}_{\calD^n, \calD^m, \calD^k, s}\in \calL(L^q(\R^n;& L^p(\R^{m}; L^r(\R^k;E)))) \colon s \in \calS\})  \\
&\lesssim (\min(i_1, i_2) + 1)(\min(j_1, j_2) + 1),
\end{align*}
and the same is true with all the other permutations of $L^q$, $L^p$ and $L^r$.
\end{prop}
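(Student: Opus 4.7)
The plan is to imitate exactly the strategy of the preceding proposition on type 1 tri-parameter partial paraproducts, with the role of the ``bi-parameter building block'' now played by bi-parameter \emph{partial} paraproducts (of the type handled by Theorem \ref{thm:biparPartialParaProd}) rather than full bi-parameter paraproducts. I will view $P^{i_1,i_2,j_1,j_2}_{\calD^n,\calD^m,\calD^k,s}$ as a one-parameter model operator in $\R^n$ whose coefficient operators are themselves bi-parameter partial paraproducts in $\R^{m+k}$, and then invoke the appropriate model operator proposition from Section \ref{sec:model} together with Theorem \ref{thm:biparPartialParaProd}.

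To make this precise, for each $K \in \calD^n$, each $I_1, I_2 \in \calD^n$ with $I_1^{(i_1)} = I_2^{(i_2)} = K$, and each $s \in \calS$, I set
$$
B_{K, I_1, I_2, s} g := \sum_{V \in \calD^m} \mathop{\sum_{J_1, J_2 \in \calD^m}}_{J_1^{(j_1)} = J_2^{(j_2)} = V} h_{J_2} \otimes \pi_{\calD^k, b_{K, V, I_1, I_2, J_1, J_2, s}}(\langle g, h_{J_1}\rangle_1), \qquad g \colon \R^{m+k} \to E.
$$
A direct computation yields the factorization
$$
P^{i_1, i_2, j_1, j_2}_{\calD^n, \calD^m, \calD^k, s} f(x_1, x_2, x_3) = \sum_{K \in \calD^n} \mathop{\sum_{I_1, I_2 \in \calD^n}}_{I_1^{(i_1)} = I_2^{(i_2)} = K} h_{I_2}(x_1)\, B_{K, I_1, I_2, s}(\langle f, h_{I_1}\rangle_1)(x_2, x_3),
$$
which exhibits $P^{i_1,i_2,j_1,j_2}_{\calD^n,\calD^m,\calD^k,s}$ as a one-parameter model operator in $\R^n$ in the sense of Section \ref{sec:model}. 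The crucial observation is that the normalized operator $\tfrac{|K|}{|I_1|^{1/2}|I_2|^{1/2}} B_{K, I_1, I_2, s}$ is itself a bi-parameter partial paraproduct in $\R^{m+k}$ of the exact form covered by Theorem \ref{thm:biparPartialParaProd}, since the rescaled coefficient $\tfrac{|K|}{|I_1|^{1/2}|I_2|^{1/2}} b_{K, V, I_1, I_2, J_1, J_2, s}$ has $\BMO_{\calD^k}$-norm at most $\tfrac{|J_1|^{1/2}|J_2|^{1/2}}{|V|}$.

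Theorem \ref{thm:biparPartialParaProd}, applied with any UMD function lattice $\tilde E$ in place of $E$, then yields
$$
\calR\Big(\Big\{\tfrac{|K|}{|I_1|^{1/2}|I_2|^{1/2}} B_{K, I_1, I_2, s}\Big\} \subset \calL(L^p(\R^m; L^r(\R^k; \tilde E)))\Big) \lesssim \min(j_1, j_2) + 1,
$$
together with the analogous bound in the ordering $L^r(\R^k; L^p(\R^m; \tilde E))$. For the stated permutation $L^q(\R^n; L^p(\R^m; L^r(\R^k; E)))$ I take $\tilde E = E$ and apply Proposition \ref{prop:mod1-1} with $F = L^p(\R^m; L^r(\R^k; E))$ to pick up the extra factor $\min(i_1, i_2) + 1$ and conclude; the symmetric permutation $L^q(\R^n; L^r(\R^k; L^p(\R^m; E)))$ is identical. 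For the permutations with $L^q(\R^n)$ deepest, namely $L^p(\R^m; L^r(\R^k; L^q(\R^n; E)))$ and $L^r(\R^k; L^p(\R^m; L^q(\R^n; E)))$, I invoke Proposition \ref{prop:mod1-3}, verifying its extension hypothesis by feeding the UMD function lattice $\tilde E = L^q(\R^n \times Y^n; E)$ into Theorem \ref{thm:biparPartialParaProd}. The two remaining permutations, in which $L^q(\R^n)$ sits in the middle slot, are handled by the evident variant of Proposition \ref{prop:mod1-2} whose proof repeats Lemma \ref{lem:ShiftWrongOrder} verbatim with the auxiliary $Y^n$ factor placed in the corresponding middle position of the ambient function space.

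The only real obstacle is the bookkeeping required to match, for each of the six permutations of $(L^q, L^p, L^r)$, the correct model operator proposition from Section \ref{sec:model} with the correct choice of inner UMD function lattice $\tilde E$ in Theorem \ref{thm:biparPartialParaProd}; no genuinely new estimate is needed beyond those already established, and the final constant $(\min(i_1,i_2)+1)(\min(j_1,j_2)+1)$ emerges as the product of the constants from the bi-parameter partial paraproduct step and the one-parameter model operator step.
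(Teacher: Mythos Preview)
Your overall strategy matches the paper's exactly: view the type 2 tri-parameter partial paraproduct as a one-parameter model operator in $\R^n$ whose coefficients are bi-parameter partial paraproducts $B_{K,I_1,I_2,s}$ in $\R^{m+k}$, then combine Theorem \ref{thm:biparPartialParaProd} with the appropriate model-operator proposition from Section \ref{sec:model}. Your treatment of the two $L^q$-outermost permutations (via Proposition \ref{prop:mod1-1}) and the two $L^q$-innermost permutations (via Proposition \ref{prop:mod1-3} with $\tilde E=L^q(\R^n\times Y^n;E)$ fed into Theorem \ref{thm:biparPartialParaProd}) is correct and essentially the same as the paper's.

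Your handling of the two $L^q$-in-the-middle permutations, however, is too vague. Applying Proposition \ref{prop:mod1-2} (with $E$ replaced by $L^r(\R^k;E)$) for, say, $L^p(\R^m;L^q(\R^n;L^r(\R^k;E)))$ requires the $\calR$-boundedness of $\tfrac{|K|}{|I_1|^{1/2}|I_2|^{1/2}}B_{K,I_1,I_2,s}$ in $\calL(L^p(\R^m;L^q(\R^n\times Y^n;L^r(\R^k;E))))$, and Theorem \ref{thm:biparPartialParaProd} does \emph{not} give this directly: the auxiliary factor $L^q(\R^n\times Y^n)$ sits strictly between the two parameters $\R^m$ and $\R^k$ on which $B_{K,I_1,I_2,s}$ acts, so no choice of $\tilde E$ reproduces this ordering. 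The paper closes this gap in two ways. For one middle permutation it runs a \emph{second} level of the model-operator machinery, viewing $B_{K,I_1,I_2,s}$ itself as a model operator in $\R^m$ with one-parameter paraproduct coefficients $\pi_{\calD^k,b}$, and then applying Proposition \ref{prop:mod1-3} together with Proposition \ref{prop:1parprodLattice} (the one-parameter paraproducts being $\calR$-bounded on any UMD function lattice, in particular on $L^q(\R^n\times Y^n;L^p(\R^m\times Y^m;E))$). For the remaining permutations it swaps the roles of $\R^n$ and $\R^m$, decomposing instead as a model operator in $\R^m$ with bi-parameter partial paraproducts in $\R^{n+k}$. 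Either device completes your argument, but you should say explicitly which one you use rather than appeal to an ``evident variant''.
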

\begin{proof}
Define for each $K, I_1, I_2 \in \calD^n$ and $s \in \calS$ the bi-parameter partial paraproduct
$$
P^{j_1, j_2}_{\calD^m, \calD^k, K, I_1, I_2, s} g := \sum_{V \in \calD^m}  \mathop{\sum_{J_1, J_2 \in \calD^m}}_{J_1^{(j_1)} = J_2^{(j_2)} = V}
h_{J_2} \otimes \pi_{\calD^k, b_{K, V, I_1,I_2, J_1, J_2, s}} ( \langle g, h_{J_1} \rangle_1 ),
$$
where $g \in L^1_{\loc}(\R^{m+k}; E$). We have by Theorem \ref{thm:biparPartialParaProd} that
\begin{align*}
&\calR\Big(\Big\{ \frac{|K|}{|I_1|^{1/2}|I_2|^{1/2}} P^{j_1, j_2}_{\calD^m, \calD^k, K, I_1, I_2, s} 
\in \calL(L^p(\R^m; L^r(\R^k;E))) \colon s \in \calS, K, I_1, I_2 \in \calD^n\Big\}\Big) \\
&\lesssim \min(j_1, j_2) + 1
\end{align*}
and
\begin{align*}
&\calR\Big(\Big\{ \frac{|K|}{|I_1|^{1/2}|I_2|^{1/2}} P^{j_1, j_2}_{\calD^m, \calD^k, K, I_1, I_2, s} 
\in \calL(L^r(\R^k; L^p(\R^m;E))) \colon s \in \calS, K, I_1, I_2 \in \calD^n\Big\}\Big) \\
&\lesssim \min(j_1, j_2) + 1.
\end{align*}
Our original operators
$$
P^{i_1, i_2, j_1, j_2}_{\calD^n, \calD^m, \calD^k, s} f = \sum_{K \in \calD^n} \mathop{\sum_{I_1, I_2 \in \calD^n}}_{I_1^{(i_1)} = I_2^{(i_2)} = K}
h_{I_2} \otimes P^{j_1, j_2}_{\calD^m, \calD^k, K, I_1, I_2, s}(\langle f, h_{I_1} \rangle_1),
$$
where $f \in L^1_{\loc}(\R^{n+m+k}; E)$, are model operators of the above form. It follows from Proposition \ref{prop:mod1-1} that
\begin{align*}
\calR(\{ P^{i_1, i_2, j_1, j_2}_{\calD^n, \calD^m, \calD^k, s}\in \calL(L^q(\R^n;& L^p(\R^{m}; L^r(\R^k;E)))) \colon s \in \calS\})  \\
&\lesssim (\min(i_1, i_2) + 1)(\min(j_1, j_2) + 1)
\end{align*}
and
\begin{align*}
\calR(\{ P^{i_1, i_2, j_1, j_2}_{\calD^n, \calD^m, \calD^k, s}\in \calL(L^q(\R^n;& L^r(\R^{k}; L^p(\R^m;E)))) \colon s \in \calS\})  \\
&\lesssim (\min(i_1, i_2) + 1)(\min(j_1, j_2) + 1).
\end{align*}
Proposition \ref{prop:mod1-2} gives
\begin{align*}
&\calR(\{ P^{i_1, i_2, j_1, j_2}_{\calD^n, \calD^m, \calD^k, s}\in \calL(L^r(\R^k; L^q(\R^{n}; L^p(\R^m;E)))) \colon s \in \calS\})  \\
&\lesssim (\min(i_1, i_2) + 1)\calR\Big(\Big\{ \frac{|K|}{|I_1|^{1/2}|I_2|^{1/2}} P^{j_1, j_2}_{\calD^m, \calD^k, K, I_1, I_2, s} \\
&  \hspace{3cm} \in \calL(L^r(\R^k; L^q(\R^n \times Y^n; L^p(\R^m;E))))  \colon s \in \calS, K, I_1, I_2 \in \calD^n\Big\}\Big).
\end{align*}
Viewing $\pi_{\calD^k, b_{K, V, I_1,I_2, J_1, J_2, s}}$ as a bounded operator in $F = L^r(\R^k; L^q(\R^n \times Y^n; E))$ and
$P^{j_1, j_2}_{\calD^m, \calD^k, K, I_1, I_2, s}$ as a model operator in $\R^m$
composed of the paraproduct operators $\pi_{\calD^k, b_{K, V, I_1,I_2, J_1, J_2, s}}$,
we see using Proposition \ref{prop:mod1-3} that the RHS is further dominated by $(\min(i_1, i_2) + 1)(\min(j_1, j_2) + 1)$ multiplied with
\begin{align*}
 \calR\Big(\Big\{ \frac{|K|}{|I_1|^{1/2}|I_2|^{1/2}}\frac{|V|}{|J_1|^{1/2}|J_2|^{1/2}} & \pi_{\calD^k, b_{K, V, I_1,I_2, J_1, J_2, s}} \\
 & \in \calL(L^r(\R^k; L^q(\R^n \times Y^n; L^p(\R^m \times Y^m;E))))\colon \\
 & s \in \calS, K, I_1, I_2 \in \calD^n, V, J_1, J_2 \in \calD^m\Big\}\Big).
\end{align*}
This constant is bounded by Proposition \ref{prop:1parprodLattice}.

We can run the argument by decomposing in the symmetric way so that the partial paraproducts are formed in $\R^{n+k}$. This gives
that we can change $\R^n$ and $\R^m$ above, which yields all the symmetries.
\end{proof}


\begin{thebibliography}{10}
\bibitem{ALV} A. Amenta, E. Lorist, M. Veraar, Rescaled extrapolation for vector-valued functions, preprint, arXiv:1703.06044v3, 2017.

\bibitem{BP} A. Barron, J. Pipher, Sparse domination for bi-parameter operators using square functions, preprint, arXiv:1709.05009, 2017.

\bibitem{Bo} J. Bourgain, Extension of a result of Benedek, Calder\'on and Panzone, Ark. Mat. 22 (1984) 91--95.

\bibitem{CF1} S. Y. A. Chang, R. Fefferman, A continuous version of duality of $H^1$ with BMO on the Bidisc, Ann. of Math. 112 (1980) 179--201.

\bibitem{CF2} S. Y. A. Chang, R. Fefferman, Some recent developments in Fourier analysis and $H^p$ theory on product domains, Bull. Amer. Math. Soc. 12 (1985) 1--43.

\bibitem{DO} F. Di Plinio, Y. Ou, Banach-valued multilinear singular integrals, Indiana Univ. Math. J., to appear,  arXiv:1506.05827, 2015.

\bibitem{Fe} R. Fefferman, Harmonic analysis on product spaces, Ann. of Math. 126 (1987) 109--130.

\bibitem{FS1} C. Fefferman, E. Stein, Some maximal inequalities,  Amer. J. Math. 93 (1971) 107--115.
  
\bibitem{FS} R. Fefferman, E. Stein, Singular integrals on product spaces, Adv. Math. 45 (1982) 117--143.

\bibitem{FL} S. H. Ferguson, M. T. Lacey, A characterization of product BMO by commutators, Acta Math. 189 (2002) 143--160.

\bibitem{GMT}  J. Garc\'ia--Cuerva, R. Mac\'ias, J. L. Torrea, The Hardy--Littlewood property of Banach lattices, Israel J. Math. 83 (1993) 177--201.

\bibitem{HPW} I. Holmes, S. Petermichl, B. Wick, Weighted little bmo and two-weight inequalities for Journ\'e commutators, preprint, arXiv:1701.06526, 2017.

\bibitem{Hy} T. Hyt\"onen, The sharp weighted bound for general Calder\'on-Zygmund operators, Ann. of Math. 175 (2012) 1473--1506.

\bibitem{HH} T. H{\"a}nninen, T. Hyt{\"o}nen, Operator-valued dyadic shifts and the $T(1)$ theorem, Monatsh. Math. 180 (2016) 213--253.

\bibitem{HM} T. Hyt\"onen, H. Martikainen, Non-homogeneous $T1$ theorem for bi-parameter singular integrals, Adv. Math. 261 (2014) 220--273.

\bibitem{HNVW1} T. Hyt\"onen, J. van Neerven, M. Veraar, L. Weis, Analysis in Banach Spaces, Volume I: Martingales and Littlewood-Paley Theory, Springer--Verlag, 2016.

\bibitem{HNVW2} T. Hyt\"onen, J. van Neerven, M. Veraar, L. Weis, Analysis in Banach Spaces, Volume II: Probabilistic Techniques and Operator Theory, Springer-Verlag, in press.

\bibitem{HP} T. Hyt\"onen, P. Portal, Vector-valued multiparameter singular integrals and pseudodifferential operators, Adv. Math. 217 (2008) 519--536.

\bibitem{Jo1} J.-L. Journ\'e, A covering lemma for product spaces, Proc. Amer. Math. Soc. 96 (1986) 593--598.

\bibitem{Jo2} J.-L. Journ\'e, Calder\'on-Zygmund operators on product spaces, Rev. Mat. Iberoamericana 1 (1985) 55--91.

\bibitem{LT} J. Lindenstrauss, L. Tzafriri, Classical Banach Spaces II: Function Spaces,  Springer--Verlag, 1979. 

\bibitem{Lo} E. Lorist, Maximal functions, factorization, and the $\calR$-boundedness of integral operators, Master's thesis, Delft University of Technology, Delft, the Netherlands, 2016.

\bibitem{Ma1} H. Martikainen, Representation of bi-parameter singular integrals by dyadic operators, Adv. Math. 229 (2012) 1734--1761.

\bibitem{MO} H. Martikainen, T. Orponen, Some obstacles in characterising the boundedness of bi-parameter singular integrals, Math. Z. 282 (2016) 535--545.

\bibitem{NTV} F. Nazarov, S. Treil, A. Volberg, The {$Tb$}-theorem on non-homogeneous spaces, Acta Math. 190 (2003) 151--239.

\bibitem{Yo} Y. Ou, Multi-parameter singular integral operators and representation theorem, Rev. Mat. Iberoam. 33 (2017) 325--350.

\bibitem{OPS} Y. Ou, S. Petermichl, E. Strouse, Higher order Journ\'e commutators and characterizations of multi-parameter BMO,
Adv. Math. 291 (2016) 24--58.

\bibitem{Pe}  S. Petermichl, The sharp bound for the Hilbert transform on weighted Lebesgue spaces in terms of the classical $A_p$
characteristic, Amer. J. Math. 129 (5) (2007) 1355--1375.

\bibitem{PW} J. Pipher, L. A. Ward, BMO from dyadic BMO on the bidisc, J. London Math. Soc. 77 (2) (2008) 524--544.

\bibitem{Ru} J. L. Rubio de Francia, Martingale and integral transforms of Banach space valued functions, Lectures Notes in Mathematics 1221, Springer-Verlag, 1985, pp. 195--222.

\bibitem{S} E. Stein, Harmonic analysis: real-variable methods, orthogonality, and oscillatory integrals. With the assistance of Timothy S. Murphy. Princeton Mathematical Series, 43. Monographs in Harmonic Analysis, III. Princeton University Press, Princeton, NJ, 1993. ISBN: 0-691-03216-5.

\bibitem{Tr} S. Treil, $H^1$ and dyadic $H^1$, in: Linear and complex analysis, in: Amer. Math. Soc. Transl. Ser. 2, vol. 226, Amer. Math. Soc., Providence, RI, 2009, pp. 179--193.

\bibitem{Vu1} E. Vuorinen, Two-weight $L^p$-inequalities for dyadic shifts and the dyadic square function, Studia Math. 237 (2017) 25--56.

\end{thebibliography}
\end{document}